\newtheorem{thm}{Theorem}[section]
\newtheorem{cor}[thm]{Corollary}
\newtheorem{theorem}[thm]{Theorem}
\newtheorem{lem}[thm]{Lemma}
\newtheorem{prop}[thm]{Proposition}
\newtheorem{conjecture}[thm]{Conjecture}
\numberwithin{equation}{section}
\DeclareMathOperator{\Res}{Res} 
\DeclareMathOperator{\res}{res}
\DeclareMathOperator{\Br}{Br}
\DeclareMathOperator{\Div}{Div}
\DeclareMathOperator{\Pic}{Pic}
\DeclareMathOperator{\Gal}{Gal}
\DeclareMathOperator{\Spec}{Spec}
\DeclareMathOperator{\rank}{rank}
\DeclareMathOperator{\inv}{inv}
\DeclareMathOperator{\Hom}{Hom}
\DeclareMathOperator{\Ker}{Ker} 
\DeclareMathOperator{\Sel}{Sel}
\newcommand{\cO}{\mathcal O}
\def\Q{{\mathbb Q}}
\def\Z{{\mathbb Z}}
\def\N{{\mathbb N}}
\def\G{{\mathbb G}}
\def\A{{\mathbb A}}
\def\go{\mathfrak{o}}
\def\T{{\mathbf T}}
\def\X{{\mathbf X}}
\def\Y{{\mathbf Y}}
\DeclareFontFamily{U}{wncy}{}
\DeclareFontShape{U}{wncy}{m}{n}{%
<5>wncyr5%
<6>wncyr6%
<7>wncyr7%
<8>wncyr8%
<9>wncyr9%
<10>wncyr10%
<11>wncyr10%
<12>wncyr6%
<14>wncyr7%
<17>wncyr8%
<20>wncyr10%
<25>wncyr10}{}
\DeclareMathAlphabet{\cyr}{U}{wncy}{m}{n}
  \newcommand{\textcyr}[1]{%
    {\fontencoding{OT2}\fontfamily{wncyr}\fontseries{m}\fontshape{n}%
     \selectfont #1}}
\newcommand{\Sha}{{\mbox{\textcyr{Sh}}}}
\begin{document}

\title[Very strong approximation]
{Very strong approximation for certain algebraic varieties}

\author{Qing Liu}
\author{Fei Xu}

\begin{abstract} Let $F$ be a global field. In this work, we show that the Brauer-Manin 
condition on adelic points for subvarieties of a torus $T_F$ over $F$ cuts out exactly 
the rational points, if either $F$ is a function field or, if $F=\Q$ and $T_F$ is split.
As an application, we prove a conjecture of Harari-Voloch over global function fields 
which states, roughly speaking, that on any rational hyperbolic curve, the local integral 
points  with the Brauer-Manin condition are the global integral points. 
Finally we prove for tori over number fields a theorem of Stoll on adelic 
points of zero-dimensional subvarieties in abelian varieties.
\end{abstract}

\address{Universit\'e de Bordeaux, Institut de Math\'ematiques de Bordeaux,
351 cours de la Li\-b\'eration, 33405 Talence, France}
\email{Qing.Liu@math.u-bordeaux1.fr}

\address{School of Mathematical Sciences, Capital Normal University, Beijing 100048, China}
\email{xufei@math.ac.cn}

\subjclass[2010]{14G05, 14G25, 11G35, 14F22}

\keywords{Strong approximation, Brauer-Manin obstruction, integral model}

\date{\today}

\maketitle

\section{Introduction}\label{intro}

Since the rational points of an algebraic variety over a global field satisfy 
the Brauer-Manin conditions,  it is natural to ask whether, conversely, 
these conditions determine the rational points. There is some recent progress 
for projective curves and abelian varieties (\cite{PV}, see also \cite{St}). On the other hands, 
Colliot-Th\'el\`ene and the second named author introduced in \cite{CTX} 
the study of integral points for homogeneous spaces of linear algebraic groups 
in relation with strong approximation properties. Afterward, Harari and Voloch \cite{HaVo} 
considered the integral points on hyperbolic curves satisfying some Brauer-Manin conditions. 
In particular they conjectured that these conditions determine the integral points of
rational hyperbolic curves.  In the present paper, the main purpose is first to 
pursuit the study of integral points and strong approximation properties for general
algebraic varieties with focus on subvarieties of torus. Secondly, we solve 
Harari-Voloch conjecture for global function fields.

Now let us fix some notation and give some definition.
Let $F$ be a global field, $\Omega_F$ be the set of all primes in $F$,
$\infty_F$ be the set of all Archimedean primes of $F$
($\infty_F=\emptyset$ if $F$ is a function field).
We will denote by $\go_S$ the ring of $S$-integers for a \emph{non-empty} finite subset $S$ of
$\Omega_F$ containing $\infty_F$.
If $F$ is a number field, we denote by $\go_F$ the ring of integers of
$F$. For each finite prime $v\in \Omega_F$,
$\go_{F,v}$ {denotes the discrete valuation ring of $F$ associated to $v$,
${\go}_v$ its completion}, $F_v=\mathrm{Frac}({\go}_v)$.
If $v\in \infty_F$, we put $\go_v={\go}_{F,v}=F_v$.

Let $\mathbb A_F$ be the adelic ring of $F$.
We denote by $\mathbb A_F^S$
the $S$-adeles obtained by projecting $\mathbb A_F$ to
$\prod_{v\not\in S} F_v$ endowed with the induced adelic
topology and have
$$\A_F=(\prod_{s\in S} F_v)\times \A_F^S$$
for any finite subset $S$ of $\Omega_F$ containing $\infty_F$.

An \emph{algebraic variety over $F$} will be a separated scheme of
finite type over $F$. Let $X_F$ be an algebraic variety.
{For all $v\in\Omega_F$, $X_F(F_v)$ is endowed
with the topology induced by the topology of $F_v$.}
The set of adelic points
$$X_F(\mathbb A_F):=\mathrm{Mor}_F(\Spec \mathbb A_F, X_F)\subset\prod_{v\in\Omega_F} X_F(F_v)$$
can be canonically endowed with a locally compact Hausdorff topology
(\cite{Conrad}, \S 4).  When $X_F$ is affine, $X$ is a closed subvariety of an affine space. Then the adelic topology of  
$X_F(\mathbb A_F)$ is induced by the product of usual adelic topology under the closed immersion.

Let $$\Br(X_F)=H^2_{\text{\'et}}(X_F, \mathbb G_m)  \quad  \text{and} \quad \xi\in \Br(X_F). $$
For any $(x_v)_{v\in \Omega_F}\in X_F(\A_F)$, one has
$\inv_v(\xi(x_v))=0$ for almost all $v\in \Omega_F$ (see \cite{Sk} \S 5.2), where
$$\inv_v : \Br(F_v)\to\mathbb Q/\mathbb Z$$ is the local invariant
map. For any subgroup $H$ of $\Br(X_{F})$, define
$$X_F(\mathbb A_F)^{H}= \{ (x_v)_v\in X_F(\mathbb A_F):
  \sum_{v\in \Omega_F }\inv_v(\xi(x_v))=0 \ \text{ for all $\xi\in H$}
  \}.$$
The Artin reciprocity law implies that
$$ X_F(F) \subseteq X_F(\mathbb A_F)^{\Br(X_F)} \subseteq X_F(\mathbb A_F). $$
For each $\xi$, $\inv_v(\xi(-))$ is locally constant,
{hence continuous}, on $X_F(F_v)$.
This implies that $X_F(\mathbb A_F)^{H}$ is a closed subset
of $X_F(\mathbb A_F)$.
\medskip

Part of the following definition is taken from \cite{CTX11}, Definition 2.1.

\begin{definition}\label{def-BM} Let $S$ be a finite subset of $\Omega_F$ and
let
$$P_S: \ X_F(\mathbb A_F) \longrightarrow X_F(\mathbb A_F^S)$$
be the projection map.
We will say that \emph{the strong approximation property with
Brauer-Manin obstruction off $S$ holds for $X_F$} or
that \emph{$X_F$ satisfies the strong approximation property with
Brauer-Manin obstruction off $S$} (SAP-BM off $S$ for short) if $X_F (F)$,
under the diagonal map, is dense in $P_S[X_F(\mathbb
A_{F})^{\Br(X_F)}]$ with adelic topology.

We will say that \emph{the very strong approximation property with
Brauer-Manin obstruction off $S$ holds for $X_F$} or
that \emph{$X_F$ satisfies the very strong approximation property with
Brauer-Manin obstruction off $S$} (abbreviated to VSAP-BM off $S$) if
$$X_F (F) = P_S[X_F(\mathbb A_{F})^{\Br(X_F)}]. $$
\end{definition}

In the classical  strong approximation examples
({\it e.g.}, affine spaces and simply connected semi-simple linear
algebraic groups),
the Brauer groups are always constant ({\it i.e.}, the canonical
map $\Br(F)\to \Br(X_F)$ is surjective) and $X_F(\A_F)^{\Br(X_F)}=X_F(\A_F)$.
So the above definition extends the classical situations.
\medskip

The first main result of this work deals with VSAP-BM and can be
regarded as an analogue of Theorem D in \cite{PV} for tori.

\begin{theorem}{\rm (Corollary~\ref{reb})}
Let $X_F$ be a subvariety of a torus {$T_F$} over a global field $F$.
Then $X_F$ satisfies VSAP-BM off $S$ for any finite subset $S$ of
$\Omega_F$ containing $\infty_F$ if one of the following
conditions is satisfied:
\begin{enumerate}
\item $F$ is a global function field (then $\infty_F=\emptyset$);
\item $F=\Q$ or an imaginary quadratic field and $T_F$ is split over $F$.
\end{enumerate}
\end{theorem}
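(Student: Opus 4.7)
The plan is to reduce the statement for $X_F$ to an analogous statement for the ambient torus $T_F$, then use the closedness of $X_F$ in $T_F$ to transfer the result back. The reduction rests on the functorial pullback $\Br(T_F)\to\Br(X_F)$: if $(x_v)\in X_F(\mathbb{A}_F)^{\Br(X_F)}$, then the image $(x_v)\in T_F(\mathbb{A}_F)$ satisfies the Brauer–Manin condition for $\Br(T_F)$, and in particular for its algebraic part $\Br_1(T_F)\subseteq \Br(T_F)$. After replacing $X_F$ by its Zariski closure in $T_F$ (which does not change $X_F(F)$ and only enlarges $X_F(\mathbb{A}_F)$), I may assume $X_F$ is closed in $T_F$.

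The main step is then to prove that for the torus $T_F$ itself, the projection $P_S[T_F(\mathbb{A}_F)^{\Br_1(T_F)}]$ is contained in $T_F(F)$ in each of the two cases. In the function field case (1), this uses the Nakayama–Poitou–Tate duality for tori over global function fields (following Harari–Szamuely), together with the fact that $T_F(F)$, being the $F$-points of an affine group, is discrete in the appropriate $S$-adelic topology once one accounts for the structure of $\go_S^\times$. In case (2), the split torus gives $T_F(F)=(F^\times)^n$, and since $F=\Q$ or an imaginary quadratic field, the Dirichlet unit theorem yields $\go_S^\times$ of explicit and low rank, making the analogous discreteness/cover-by-$P_S$ argument direct: the Brauer classes coming from cup products with characters of $T_F$ (combined with Artin reciprocity) cut out precisely $T_F(F)$ after projection.

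Once a rational point $t\in T_F(F)$ matching $P_S((x_v))$ has been produced, I transfer back to $X_F$. Because $S\neq\Omega_F$ (in the relevant nontrivial case) and $X_F$ is closed in $T_F$, picking any prime $v\notin S$ gives $t=x_v$ in $T_F(F_v)$ with $x_v\in X_F(F_v)$. The closed immersion $X_F\hookrightarrow T_F$ is determined by its set of $F$-points as a scheme-theoretic condition, and this condition is stable under base change to $F_v$: the morphism $\Spec F\to T_F$ factors through $X_F$ iff its base change to $\Spec F_v$ does. Hence $t\in X_F(F)$, as required.

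The hardest step will be the torus-case statement in the middle paragraph, i.e.\ showing that $P_S[T_F(\mathbb{A}_F)^{\Br(T_F)}]\subseteq T_F(F)$. Naively this looks too strong (for instance $\Br(\mathbb{G}_m/\Q)=\Br(\Q)$ gives a trivial BM condition), and one sees that the argument must exploit the genuinely larger group $\Br(X_F)$ of the subvariety, not merely the pullback from $T_F$. The delicate point is to produce enough Brauer elements on $X_F$ (coming, for example, from the residue at components where $X_F$ fails to meet a translate of a subtorus of $T_F$) to distinguish the non-rational adelic points from the rational ones. The hypotheses in (1) and (2) are precisely what makes this extra Brauer information effective: a finite residue field in the function field case, and a controlled (essentially finite) unit group in the split case over $\Q$ or an imaginary quadratic field.
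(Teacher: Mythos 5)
Your plan in the first three paragraphs is, modulo one correctable slip, exactly the paper's strategy, and the fourth paragraph then undermines it based on a false fact. Let me take these in order.

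\textbf{The slip in paragraph one.} Replacing $X_F$ by its Zariski closure in $T_F$ does change $X_F(F)$: the closure $\overline{X_F}$ can acquire new rational points (e.g.\ if $X_F$ is an affine curve minus a rational point). The fix is not to take the closure, but to work directly with the locally closed immersion $X_F\hookrightarrow T_F$; the transfer argument you give in your third paragraph (producing $t\in T_F(F)$, then checking at a single $v_0\notin S$ that $t$ factors through $X_F$ using that the immersion gives a surjection of local rings stable under base change) works verbatim for immersions, not just closed ones. This is the content of Proposition~\ref{BM-VSAP-im} in the paper.

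\textbf{The genuine gap is in the fourth paragraph.} You write ``$\Br(\mathbb G_m/\Q)=\Br(\Q)$ gives a trivial BM condition,'' conclude that the torus-case statement ``looks too strong,'' and on this basis abandon the clean reduction in favour of a vague alternative (residue classes on $X_F$ along omitted subtori, etc.). But the premise is false. Since $\Pic(\G_{m,\bar\Q})=0$ and $\Br(\G_{m,\bar\Q})=0$ by Tsen, the Hochschild--Serre spectral sequence gives
$$\Br(\G_{m,\Q})\cong H^2(\Q,\,\cO(\G_{m,\bar\Q})^\times)\cong \Br(\Q)\oplus H^2(\Q,\Z),$$
and $H^2(\Q,\Z)\cong H^1(\Q,\Q/\Z)\cong\Hom_{\mathrm{cont}}(\Gal(\Q^{\mathrm{ab}}/\Q),\Q/\Z)$ is far from trivial. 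It is precisely this non-constant part $H^2(F,\widehat T_F)\hookrightarrow\Br(T_F)$, paired against adelic points, that cuts $T_F(\A_F)^{\Br(T_F)}$ down to (essentially) $T_F(F)$. The torus statement you doubted---$T_F(F)=P_{\infty_F}[T_F(\A_F)^{\Br(T_F)}]$ under hypothesis (1) or (2)---is Theorem~\ref{stp} of the paper, proved via the pairing $T_F(\A_F)\times H^2(F,\widehat T_F)\to\Q/\Z$: in the function-field case by the Cassels--Tate-type exact sequence (González-Avilés--Tan), and in the number-field case by Shyr's theorem (the rank condition forces $T_F(F)$ to be discrete in $T_F(\A_F^{\infty_F})$) together with Harari's Théorème~2. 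Once this is in place, Corollary~\ref{reb} follows formally by the immersion-transfer step, with no need for any extra Brauer elements on $X_F$. Your proposed alternative route is therefore unnecessary, and as written it is not carried out; the whole weight of the proof should rest on the torus case and on the immersion functoriality, which is where you started.
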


For general algebraic varieties, we have the following result.

\begin{theorem}{\rm (Corollary~\ref{rev})} Any algebraic variety over $\Q$ or an imaginary quadratic field always contains an dense open subvariety which satisfies  VSAP-BM off $S$ for any finite subset $S$ of
$\Omega_F$ containing $\infty_F$.  
\end{theorem}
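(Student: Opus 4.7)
The plan is to exhibit, for any algebraic variety $X_F$ over $F=\Q$ or an imaginary quadratic field, a dense open subvariety $V\subseteq X_F$ which is isomorphic to a closed subvariety of a split torus $\G_m^N$ over $F$; the conclusion will then follow at once from Corollary~\ref{reb}, case~(2).

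First I would reduce to the irreducible situation. Let $X_1,\ldots,X_k$ be the irreducible components of $X_F$ and set $W_i:=X_F\setminus\bigcup_{j\neq i}X_j$, which is open in $X_F$ and open dense in $X_i$; the $W_i$ are pairwise disjoint, so $W:=\bigsqcup_i W_i$ is a dense open subvariety of $X_F$. Inside each irreducible $W_i$, any nonempty affine open is dense, so I pick such an affine open $U_i=\Spec A_i$ and choose $F$-algebra generators $a_1,\ldots,a_{n_i}$ of $A_i$. Since $F$ is infinite, I can translate $a_j\mapsto a_j+\lambda_j$ with $\lambda_j\in F$ so that no $a_j$ is nilpotent: two scalars $\lambda_1,\lambda_2\in F$ with $a_j+\lambda_1$ and $a_j+\lambda_2$ both nilpotent would yield $\lambda_1-\lambda_2\in F\cap\mathrm{nilrad}(A_i)=\{0\}$. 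Then $V_i:=D(a_1\cdots a_{n_i})$ is a dense affine open of $U_i$, and the map $F[t_1^{\pm 1},\ldots,t_{n_i}^{\pm 1}]\twoheadrightarrow A_i[(a_1\cdots a_{n_i})^{-1}]$, $t_j\mapsto a_j$, is surjective because $a_1,\ldots,a_{n_i}$ generate $A_i$ as an $F$-algebra; thus $V_i$ is a closed subvariety of the split torus $\G_m^{n_i}$.

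It remains to assemble the $V_i$ into a single closed subvariety of one torus. I set $n=\max_i n_i$ and pick pairwise distinct units $c_1,\ldots,c_k\in F^{\times}$. Padding each embedding $V_i\hookrightarrow\G_m^{n_i}$ with coordinates equal to $1$ and appending the extra coordinate $c_i$, I obtain maps $V_i\hookrightarrow\G_m^{n+1}$ whose images are pairwise disjoint (they differ in the last coordinate). The induced ring map $F[t_1^{\pm 1},\ldots,t_{n+1}^{\pm 1}]\to\prod_i\cO(V_i)$ is surjective: the Lagrange interpolants $\prod_{l\neq i}(t_{n+1}-c_l)/\prod_{l\neq i}(c_i-c_l)$ hit the orthogonal idempotents $e_i$ of $\prod_i\cO(V_i)$, and multiplying these idempotents by the images of $t_1,\ldots,t_n$ (and their inverses) recovers each factor $\cO(V_i)$. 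Therefore $V:=\bigsqcup_i V_i$ is simultaneously a dense open subvariety of $X_F$ and a closed subvariety of $\G_m^{n+1}$, and Corollary~\ref{reb}(2) yields VSAP-BM off $S$ for $V$. The main point requiring care is precisely this last step—checking that the disjoint-union embedding is a closed immersion of schemes rather than merely a map with closed image—which is what the idempotent computation above confirms.
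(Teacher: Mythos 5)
Your argument is correct. The core idea matches the paper's: pass to affine opens of the irreducible components, arrange for the chosen generators not to vanish identically, and realize the resulting distinguished open as a closed subvariety of a split torus so that Corollary~\ref{reb} (via Example~\ref{rank0}(1)) applies. The paper secures non-emptiness by embedding into $\A^n$ with $n$ minimal and intersecting with the complement of $n$ independent hyperplanes; you instead translate coordinates to avoid nilpotence and use irreducibility to keep the product $a_1\cdots a_{n_i}$ out of the unique minimal prime. These are two phrasings of the same reduction.

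Where you genuinely diverge is the assembly of the components. The paper stops once each component has a dense open $V_i$ that is a closed subvariety of a split torus, and tacitly invokes the disjoint-union decomposition of the Brauer--Manin set (the proposition on $P_{\Xi}[X(\A_F)^{\Br}]$ for $X = X_1\sqcup X_2$) to conclude that $U=\bigsqcup_i V_i$ inherits VSAP-BM. You instead build a single closed immersion $\bigsqcup_i V_i\hookrightarrow\G_m^{n+1}$ by padding with $1$'s and appending a new coordinate taking pairwise distinct constant values $c_i$, and you check closedness via the Lagrange interpolants of $t_{n+1}$ hitting the orthogonal idempotents. This is a valid and slightly more self-contained route: it reduces the entire statement to a single application of Corollary~\ref{reb} (plus the immersion stability, Proposition~\ref{BM-VSAP-im}), and it dispenses with the disjoint-union lemma. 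In effect, you have also supplied a proof that a finite disjoint union of subvarieties of split tori is again a subvariety of a single split torus, a fact the paper leaves implicit. The trade-off is that your write-up is longer; given that the paper has already established the disjoint-union decomposition, its route is marginally shorter, but both arguments are sound and yield the same result.
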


As an application, we prove Harari-Voloch's conjecture
over a global function field by using \cite{Sun}, Theorem 1.

\begin{theorem}\label{m}{\rm (Corollary \ref{true})}
Let $F$ be a global function field and $X_F$ be an open subset
of $\mathbf P^1_F$ with complementary of degree $\ge 3$ as a reduced
separable divisor. Then
 $$ X_F(F) = X_F({\mathbb A}_F^S)^{B_S(X_F)}, $$
 where
 $$B_S(X_F)= \Ker[\Br(X_F) \to \prod_{v\in S}
 \Br(X_{F_v})/\Br(F_v)] $$
for any finite subset $S$ of $\Omega_F$.
\end{theorem}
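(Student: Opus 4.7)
My plan is to reduce Theorem~\ref{m} to the VSAP-BM statement of Corollary~\ref{reb}, with Sun's Theorem~1 of \cite{Sun} providing the key lifting from an $S$-adelic $B_S$-orthogonal point to a full adelic $\Br$-orthogonal point. First I would realize $X_F$ as a closed subvariety of a torus over $F$. Write $D=\Spec A$ with $A$ a finite \'etale $F$-algebra of dimension $n=\deg D\ge 3$; then the $F$-morphism
\[ X_F \longrightarrow T_F := \mathrm{Res}_{A/F}(\bG_m)/\bG_m \]
sending (on $\bar F$-points) $(s:t)\mapsto(s-d_1 t:\dots:s-d_n t)$, where $d_1,\dots,d_n$ are the $\bar F$-points of $D$, is a closed immersion of $X_F$ into a torus $T_F$ of dimension $n-1$. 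Applying Corollary~\ref{reb}(1), valid because $F$ is a function field, then yields
\[ X_F(F)=P_S\bigl[X_F(\A_F)^{\Br(X_F)}\bigr] \qquad\text{for every finite }S\subset\Omega_F. \]

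Next I would compare the two adelic loci. The inclusion $X_F(F)\subseteq X_F(\A_F^S)^{B_S(X_F)}$ is immediate from reciprocity together with the fact that, for $\xi\in B_S(X_F)$, the local invariants $\inv_v(\xi|_{X_F(F_v)})$ at $v\in S$ are constant on $X_F(F_v)$, so that the Brauer--Manin pairing descends well to $X_F(\A_F^S)$. For the reverse inclusion, given $(x_v)_{v\notin S}\in X_F(\A_F^S)^{B_S(X_F)}$, it suffices to produce $(y_v)_{v\in S}\in\prod_{v\in S}X_F(F_v)$ such that the combined adele $((y_v),(x_v))$ lies in $X_F(\A_F)^{\Br(X_F)}$; the VSAP-BM equality displayed above then forces $(x_v)$ to come from a point of $X_F(F)$. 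Orthogonality against $B_S(X_F)$ is automatic from the choice of $(x_v)$, so the remaining task is to match, via the local pairings at $v\in S$, the prescribed linear functional $\xi\mapsto-\sum_{v\notin S}\inv_v(\xi(x_v))$ on the quotient $\Br(X_F)/B_S(X_F)\hookrightarrow\prod_{v\in S}\Br(X_{F_v})/\Br(F_v)$.

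Finally, this matching step is exactly what Sun's Theorem~1 of \cite{Sun} provides under our hypotheses ($F$ a global function field, $X_F\subset\P^1_F$ open with reduced separable complement of degree $\ge3$): it yields local points $(y_v)_{v\in S}$ realizing any prescribed linear functional on $\Br(X_F)/B_S(X_F)$ arising from an $S$-adelic point orthogonal to $B_S$. The main obstacle of the proof lies precisely in this lifting step, where one must translate the hypothesis on $(x_v)_{v\notin S}$ into the correct input for Sun's theorem; once this is done, the two inclusions combine to give the claimed equality $X_F(F)=X_F(\A_F^S)^{B_S(X_F)}$.
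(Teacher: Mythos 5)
Your proposal has a genuine gap at the ``matching step,'' which is exactly where you acknowledge the main obstacle lies. The statement you attribute to Sun's Theorem~1 is not what that theorem says, and the paper uses it in a completely different way. Sun's Theorem~1 (from \emph{Product of local points of subvarieties of almost isotrivial semi-abelian varieties over a global function field}) is a Mordell--Lang type result: for a finitely generated subgroup $\Gamma$ of the $F$-points of a semi-abelian variety $G$ over a global function field and a subvariety $Z\subset G$, the topological closure $\overline{\Gamma}$ of $\Gamma$ in $\prod_{v\notin S}G(F_v)$ satisfies $\overline{\Gamma}\cap Z(\A_F^S)=\Gamma\cap Z(F)$. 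It says nothing about producing local points $(y_v)_{v\in S}$ realizing a prescribed linear functional on $\Br(X_F)/B_S(X_F)$; there is no way to extract such a statement from it. Moreover, the very fact that such $(y_v)_{v\in S}$ always exist is logically equivalent, via Corollary~\ref{reb}, to the theorem you are trying to prove, so assuming it is circular.

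The paper's route avoids this lifting step entirely. After embedding $X_F$ into a torus $T_F$ via Lemma~\ref{embed-T} (your torus $\mathrm{Res}_{A/F}(\G_m)/\G_m$ is indeed the same one), the key ingredient is Proposition~\ref{eqhv}(1), which states $T_F(\A_F^S)^{B_{1,S}(T_F)}=\overline{T_F(F)}^S$. For function fields this is proved using the duality exact sequences of Proposition~\ref{tori} (the Gonz\'alez-Avil\'es--Tan sequence), not via any VSAP-BM statement. Combined with the functoriality of the Brauer--Manin pairing and the split exact sequence relating $B_S(X_F)$ to $B_{1,S}(T_F)$, this yields $X_F(\A_F^S)^{B_S(X_F)}=\overline{T_F(F)}^S\cap X_F(\A_F^S)$ (Proposition~\ref{eqhv}(2)). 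Only at the end does Sun's Theorem~1 enter, applied to the finitely generated group $\T(\go_{S_1})$ of $S_1$-integral points of a group-scheme model $\T$ of $T_F$, to conclude that $\overline{\T(\go_{S_1})}\cap X_F(\A_F^S)=\T(\go_{S_1})\cap X_F(F)\subseteq X_F(F)$. In short: you have the right ingredients (torus embedding, Sun's theorem) but the wrong architecture, and the crucial bridge you rely on is both unproved and misattributed.
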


\noindent We prove that the same result is also true over $\Q$ when $S=\infty_{\Q}$ and $\mathbf P^1_{\Q} \setminus X_\Q$ contains more than a single point in Corollary~\ref{hv-ex2}.  
\medskip

The next theorem  can be regarded as an analogue for tori of \cite{St}, Theorem 3.11. 
After this paper was written, we learned that C.-L. Sun proved an equivalent statement 
(\cite{Sun2}, Theorem 1) by a slightly different method.

\begin{thm}{\rm (Theorem \ref{inters})} If $Z$ is a finite subscheme
  of a torus $T_F$ over a number field $F$, then for any
finite subset $S$ of $\Omega_F$ containing $\infty_F$, we have
 $$ \overline{T_F(F)}^{S} \cap Z(\A_F^S) = Z(F) $$
where $\overline{T_F(F)}^S$ is the topological closure of $T_F(F)$ inside $T_F(\A_F^S)$.
\end{thm}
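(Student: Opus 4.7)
My plan is to follow the strategy of Stoll's proof of \cite{St}, Theorem 3.11 for abelian varieties, suitably modified for tori. The main structural difference is that $T(F)$ is not finitely generated in general; however, any sequence $t_n \in T(F)$ converging in $T(\A_F^S)$ to a point $z \in Z(\A_F^S)$ must eventually lie in the $S'$-arithmetic subgroup $T(\go_{S'})$ for the finite set $S' = S \cup \{v \notin S : z^v \notin T(\go_v)\}$, by inspecting the adelic neighbourhoods of $z$. Since $T(\go_{S'})$ is a finitely generated abelian group (a version of the Dirichlet unit theorem for tori), the problem reduces to an analogous statement about a finitely generated subgroup of $T$, where Stoll-type techniques apply.

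\textbf{Main argument.} Let $z \in \overline{T_F(F)}^S \cap Z(\A_F^S)$ and choose $t_n \in T_F(F)$ with $t_n \to z$; by the remark above, after dropping finitely many terms I may assume $t_n \in T(\go_{S'})$ for all $n$. Let $K/F$ be the Galois closure of the field of definition of the geometric points of $Z$, and set $\Gamma = \Gal(K/F)$, so that $Z(\bar F) = \{P_1,\ldots,P_m\} \subset T(K)$ forms a finite $\Gamma$-set. For each $v \notin S$ unramified in $K$, the local point $z^v$ corresponds, after a choice of place $w$ of $K$ above $v$ with $K_w = F_v$, to some $P_{j(v)}$ fixed by the Frobenius at $v$; the theorem amounts to showing that $j(v)$ is independent of $v$ and that the common value lies in $T(F) = T(K)^\Gamma$. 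Fixing $v_0 \notin S$ and setting $u_n = t_n P_{j(v_0)}^{-1} \in T(K)$, one has $u_n \to 1$ at $v_0$, and $u_n \to P_{j(v)} P_{j(v_0)}^{-1}$ (via the appropriate place of $K$) at any other place $v \notin S$.

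\textbf{Main obstacle.} The heart of the argument is to show that, when $j(v) \ne j(v_0)$, the non-identity element $\alpha = P_{j(v)} P_{j(v_0)}^{-1} \in T(K)$ cannot arise as a $v$-adic limit of elements of the finitely generated group $\{t P_{j(v_0)}^{-1} : t \in T(\go_{S'})\} \subset T(K)$ that simultaneously converges $v_0$-adically to $1$. I would attempt to derive the required contradiction by combining (i) the Chebotarev density theorem applied to $\Gamma$, to produce infinitely many places with prescribed Frobenius behaviour so that the local conditions at many $v$ can be varied freely, with (ii) the Artin reciprocity law for $T$ (equivalently, Poitou--Tate duality for $T$ and its character lattice $\widehat T$), which forces the closure of the $S'$-arithmetic group inside $T(\A_F^S)$ to satisfy global compatibility conditions indexed by a finite Galois cohomology group attached to $\widehat T$. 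Carrying out this ``Mordell--Weil sieve'' style argument cleanly and ruling out non-constant $j(v)$ is the technical core of the proof; an alternative and essentially equivalent route is given by Sun's method in \cite{Sun2}.
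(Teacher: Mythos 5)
Your opening reduction is sound and close in spirit to what the paper does: pass to a large enough finite set $S_1\supseteq S$ and a model $\T$ over $\go_{S_1}$ so that the problem becomes one about the adelic closure of the finitely generated group $\T(\go_{S_1})$. The paper also makes the clean reduction to the case $Z(F)=Z(\bar F)$ first (via a base change as in Poonen--Voloch), which you do not do; as a result you end up chasing a \emph{coset} $\{tP_{j(v_0)}^{-1}:t\in T(\go_{S'})\}$ inside $T(K)$ rather than a subgroup, which is an awkward object for a Mordell--Weil sieve and not obviously controlled by the duality machinery you invoke. This is a first, avoidable complication.

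The more serious issue is that the technical core is not actually carried out. You write that you ``would attempt to derive the required contradiction'' by combining Chebotarev with ``Artin reciprocity / Poitou--Tate duality for $T$ and $\widehat T$,'' and that ``carrying out this Mordell--Weil sieve style argument cleanly\ldots is the technical core.'' But Poitou--Tate duality alone does not give the sieve: what is actually needed is a Kummer-theoretic description of the adelic closure $\overline{\T(\go_{S_1})}$. The paper establishes this through a chain of nontrivial lemmas: the finiteness of $\Sha_S(\T)$ (Lemma~\ref{sha}), a uniform bound $h$ so that $\T(\go_{S_1})\cap hN\prod_{v\notin S_1}\T(\go_v)\subseteq N\T(\go_{S_1})$ (Lemma~\ref{h-k}), the identification $\widehat{\T(\go_{S_1})}\cong\overline{\T(\go_{S_1})}\cong\widehat{\Sel_S(F,\T)}$ (Proposition~\ref{equiv}, Corollary~\ref{T-S}), and a uniform bound on $H^1(F_N/F,\T[N])$ independent of $N$ (Lemma~\ref{cohtor}). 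Only with these in hand does Chebotarev (Lemma~\ref{density}) produce a place $v_0$ contradicting $(x_v)\in Z(\A_F^S)$, after which the torsion comparison $(\overline{\T(\go_{S_1})})_{\mathrm{tors}}=\T(\go_{S_1})_{\mathrm{tors}}$ closes the argument. Your proposal names the two ingredients (Chebotarev, reciprocity) but omits the Selmer/Kummer scaffolding that makes them fit together, and the step you identify as the crux is exactly the part left undone. As it stands this is a plausible plan, not a proof.
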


Note that if this result is true for any curve contained in $T_F$, then Harari-Voloch conjecture 
holds over number fields (Proposition~\ref{eqhv} (1)). As an application of the above theorem, we prove in 
Proposition~\ref{quasi-finite} that if $f: X_F\to Y_F$ is a quasi-finite morphism of rational hyperbolic curves and
if Harari-Voloch conjecture is true for $Y_F$, then it is true for $X_F$. 

\section{Integral models and strong approximation}

Let $F, S$ and $X_F$ be as in \S~\ref{intro}. 
The aim of this section is first to show that the subsets of $X_F(\A_{F}^S)$
of the form $\prod_{v\notin S}\X(\go_v)$, when $\X$ runs through the 
integral models of $X_F$ over $\go_S$, form a topology basis for 
$X_F(\A_F^S)$ (Corollary~\ref{U-model}). Then we apply this result to 
relate strong approximation of adelic points to that of integral 
points (Theorem~\ref{Diophantine-sap}).

\subsection{Integral models} 
We start with some basic definitions and technical preliminary results.

\begin{definition} Let $S$ be a non-empty finite subset of
$\Omega_F$ containing $\infty_F$.
An \emph{integral model (or a model) of $X_F$} over
$\go_S$ is a scheme $\X$ faithfully flat of finite type
and separated over $\go_S$ endowed with an isomorphism
${\X}\times_{\go_S} F\cong X_F$ over $F$.
An \emph{integral point on $\X$} is a section $\in\X(\go_S)$.
\end{definition}

Define $\mathbb A_{F,S}=\prod_{v\in S} F_v\times \prod_{v\notin S}
{\go}_v\subset \mathbb A_F$.
Then $\X(\mathbb A_{F,S})\subseteq X_F(\mathbb A_F)$ and
it is known that the canonical map
$$ \X(\mathbb A_{F,S})\to \prod_{v\in S} \X(F_v) \times \prod_{v\notin S}
\X({\go}_v)=\prod_{v\in S} X_F(F_v) \times \prod_{v\notin S}\X({\go}_v)$$
is a bijection (\cite{Conrad}, Theorem 3.6, where $\mathcal O_{v}$ is our
${\go}_v$).
By Artin reciprocity law, a necessary condition for
$\X(\go_S)\neq \emptyset$ is
\begin{equation}\label{bm}
\X(\A_{F,S})^{\Br(X_F)} \neq \emptyset
\end{equation}
When $\Br(X_F)$ is constant, the above condition is equivalent to
the classical local-global (Hasse) principle.

\begin{definition} If the condition (\ref{bm}) is also sufficient to insure that
$\X(\go_S)\neq \emptyset$, we say \emph{the Brauer-Manin obstruction
is the only obstruction} for {the} existence of integral points
{on} $\X$.
\end{definition}

We will use the following gluing process several times.

\begin{lem}\label{gluing} Let $v_1,\dots, v_n\in \Spec(\go_S)$ and
$V=\Spec\go_S\setminus \{v_1,\dots, v_n\}$.
Suppose we are given separated schemes of finite type $\Y$ over $V$,
$\Y_i$ over $\go_{F, v_i}$ and isomorphisms
$f_i : \Y_F\simeq (\Y_i)_{F}$. Then there exists
a unique (up to isomorphisms) separated scheme of finite type $\X$ over
$\go_S$ such that $\X_V\simeq \Y$ and $\X_{\go_{F,v_i}}\simeq
\Y_i$.

{If $\Y$, $\Y_1,\dots, \Y_n$ are group schemes and if
the $f_i$ are isomorphisms of group schemes, then $\X$ is a group
scheme over $\go_S$.}
\end{lem}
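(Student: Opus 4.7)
The plan is to construct $\X$ by the classical spread-out-and-glue technique. Since $\go_S$ is a Dedekind domain, $\Spec\go_S$ has Krull dimension at most one, so every open neighborhood of a closed point $v_i$ has the form $\Spec\go_S\setminus T$ for a finite set $T$ of closed points with $v_i\notin T$, and in particular $\go_{F,v_i}=\varinjlim_U\Gamma(U,\cO_{\Spec\go_S})$ over such $U$. Applying the standard limit and spreading-out results for schemes of finite presentation (EGA IV, 8.8.2 and 8.10.5), each separated finite-type scheme $\Y_i$ over $\go_{F,v_i}$ descends to a separated finite-type scheme $\Y_i'$ over some such $U_i$; after a further finite deletion of points I may assume $v_j\notin U_i$ for $j\neq i$. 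The same result, applied at the generic point, extends $f_i$ to an isomorphism $\Y|_{W_i}\simeq \Y_i'|_{W_i}$ over some open $W_i\subseteq U_i\cap V$; deleting the finite set $(U_i\cap V)\setminus W_i$ from $U_i$, I may further assume $W_i=U_i\cap V$.

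Now $\{V,U_1,\dots,U_n\}$ is an open cover of $\Spec\go_S$ whose pairwise overlaps carry canonical identifications: on $V\cap U_i$ via the just-constructed $\tilde f_i$, and on $U_i\cap U_j\subseteq V$ (for $i\neq j$) via $\tilde f_j\circ\tilde f_i^{-1}$. The cocycle condition holds tautologically because every intersection $U_i\cap U_j$ with $i\neq j$ lies in $V$ and every identification there factors through $\Y$. Gluing produces a scheme $\X\to\Spec\go_S$, separated and of finite type since both properties are local on the base. For uniqueness, any other model $\X'$ with the same data yields isomorphisms $\X'_V\simeq\X_V$ and $\X'_{\go_{F,v_i}}\simeq\X_{\go_{F,v_i}}$ whose generic fibers agree (both realize the identity via the fixed $f_i$); spreading the second to a neighborhood of $v_i$ and invoking separatedness on its overlap with $V$ forces the two isomorphisms to coincide there, so they glue to a global isomorphism $\X'\simeq\X$.

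For the group-scheme statement, the same spreading-out machinery applies to the multiplication, inverse and identity morphisms of each $\Y_i$: they descend to $U_i$ after at most a further finite shrinkage, and the hypothesis that $f_i$ is a group-scheme isomorphism ensures that on $U_i\cap V$ they agree with the corresponding structure morphisms of $\Y$. Gluing yields group-scheme morphisms on $\X$, and the group axioms are equalities of morphisms into a separated $\go_S$-scheme, hence can be checked locally on the cover $\{V,U_i\}$, where they hold by assumption. I expect the main bookkeeping obstacle to be the careful sequential shrinking of each $U_i$ so that a single open $U_i\cap V$ supports all of the needed spreadings-out simultaneously (the scheme, the isomorphism $f_i$, and in the group case three structure morphisms); beyond that, separatedness of the input data is what makes the various overlap and cocycle identities collapse to tautologies.
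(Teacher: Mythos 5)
Your proposal is correct and follows essentially the same route as the paper: spread out each $\Y_i$ and $f_i$ from $\go_{F,v_i}$ to an open neighborhood of $v_i$ using the limit results of EGA IV, arrange (by shrinking) that the cocycle conditions on the overlaps hold, and glue against $\Y$ over $V$; the group-scheme case is handled by additionally spreading out the structure morphisms. The one small refinement you add — shrinking so that $U_i\cap U_j\subseteq V$ for $i\neq j$, making the cocycle condition tautological because every identification factors through $\Y$ — is a clean way to package the step the paper states as ``the identity $f_{jk}\circ f_{ij}=f_{ik}$ holds on $V_i\cap V_j\cap V_k$,'' and your uniqueness discussion fills in a detail the paper leaves unproven.
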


\begin{proof} Let $f_{ij}=f_j\circ f_i^{-1} : (\Y_i)_{F}\to (\Y_j)_{F}$.
Then $f_{jk}\circ f_{ij}=f_{ik}$. There are open neighborhoods
$V_i\ni v_i$ such that $\Y_i$ and $f_i$ extend to $V_i$ and
the identity $f_{jk}\circ f_{ij}=f_{ik}$ holds on $V_i\cap V_j\cap V_k$.
We can then use the usual gluing process to construct $\X$. The
separatedness of $\X$ comes from that of $\Y\to V$ and of
$\Y_i\to\Spec\go_{F, v_i}$. {The assertion on the
  structure of group scheme is straightforward.}
\end{proof}

\begin{rem} Let $\Y$ be an $\go_S$-scheme of finite type, not necessarily
flat, with generic fiber isomorphic to $X$. Let $\X$ be the
biggest flat closed subscheme of $\Y$ (defined by the
sheaf of ideals of $\go_S$-torsion elements of $\mathcal O_\X$ or,
equivalently, $\X$ is the scheme-theoretic closure in $\Y$ of the
generic fiber of $\Y$). Then
for any flat $\go_S$-scheme $T$, the canonical map
$\Y(T)\to \X(T)$ is bijective. So for the questions we deal
with in this paper, it is harmless to restrict ourselves to flat models.
On the other hand, for a model $\X$ to have local sections in ${\go}_{v}$
for all $v\notin S$, it is necessary that $\X$ be faithfully flat over
$\go_S$.
\end{rem}

\begin{prop} Let $R$ be a Dedekind domain with field of fractions $K$.
Let $X_K$ be an algebraic variety over $K$. Then
$X_K$ has a model $\X$ over $R$. Moreover there exists
a such $\X$ which is
proper (resp. projective) over $R$ if $X_K$ is proper (resp. projective)
over $K$.
\end{prop}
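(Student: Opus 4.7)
The plan is to construct the model by descent of the finite amount of data defining $X_K$: first the affine and projective cases by direct constructions, then the general (and proper) case via a spreading-out argument combined with a gluing step modelled on Lemma~\ref{gluing}.

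\smallskip

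\emph{Affine and projective cases.} For $X_K=\Spec A$ with $A$ a finitely generated $K$-algebra, choose generators $a_1,\dots,a_n\in A$ and let $B:=R[a_1,\dots,a_n]\subseteq A$. Then $B$ is a finitely generated $R$-subalgebra, torsion-free over the Dedekind domain $R$ (hence $R$-flat), and $B\otimes_R K=A$; so $\Spec B$ is an affine model. If $X_K\hookrightarrow\P^N_K$ is a closed immersion, the scheme-theoretic closure $\mathcal{X}$ of $X_K$ inside $\P^N_R$ is automatically flat (its ideal sheaf is $R$-torsion-free) and projective, with generic fiber $X_K$.

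\smallskip

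\emph{General case.} Cover $X_K$ by finitely many affine opens $U_{i,K}=\Spec A_i$; since $X_K$ is separated, each pairwise intersection $U_{i,K}\cap U_{j,K}$ is again affine. All the defining data---finite generating sets and relations for each $A_i$, the ring maps implementing the gluing, the cocycle identities on triple intersections, and the closed-immersion criterion for separatedness---amount to finitely many algebraic identities over $K$. Hence there exists a nonzero $d\in R$ such that the whole configuration descends to a flat, separated, finite-type model $\mathcal{X}'$ over $R':=R[1/d]$. The set $T:=\{v\in\Spec R: v\ni d\}$ is finite because $R$ is Dedekind. For each $v\in T$, I would construct a local model $\mathcal{X}_v$ of $X_K$ over the DVR $R_v$ by the analogue of the same argument (gluing is immediate since $\Spec R_v$ has only two points). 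An argument parallel to Lemma~\ref{gluing}, valid over any Dedekind domain, then glues $\mathcal{X}'$ and the $\mathcal{X}_v$ into a model $\mathcal{X}$ over $R$.

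\smallskip

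\emph{Proper/projective refinement.} Properness and projectivity are finitely presented conditions and therefore spread out, so $\mathcal{X}'/R'$ can be chosen proper (resp.\ projective) whenever $X_K$ is. At each $v\in T$, the projective refinement of $\mathcal{X}_v$ is again obtained by closure inside $\P^N_{R_v}$; the proper refinement is obtained by invoking Nagata's compactification theorem on any initial local model and taking the schematic closure of $X_K$ in the resulting compactification. Gluing preserves both conditions. The main obstacle is precisely this last step---producing a proper model in the non-projective case over each local DVR---which rests on Nagata compactification over a Noetherian base.
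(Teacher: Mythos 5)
Your affine-case construction has a gap that the paper explicitly warns about. Setting $B=R[a_1,\dots,a_n]\subseteq A$ does give a flat, finite type $R$-algebra with $B\otimes_R K=A$, but flatness alone is not enough: the paper's definition of a model requires \emph{faithful} flatness, which for a flat morphism of finite type means surjectivity on $\Spec$, i.e. every fiber is nonempty. Your $B$ can easily have empty fibers. For instance with $R=\Z$ and $A=\Q[x,y]/(2y^2-4x^3-1)$, taking the generators $\bar x,\bar y$ gives $B=\Z[x,y]/(2y^2-4x^3-1)$; this is $\Z$-torsion-free (hence flat) but $B/2B=\mathbb F_2[x,y]/(1)=0$, so the fiber at $2$ is empty and $\Spec B\to\Spec\Z$ is not surjective. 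This is precisely what the paper flags in its proof: ``if we extend directly to $R$\dots we just get a flat, but not necessarily faithfully flat scheme.'' The paper's workaround is to embed $X_K$ as a dense open of a projective $Y_K\subseteq\P^N_K$, take the scheme-theoretic closures $\Y$ of $Y_K$ and $\mathbf Z$ of $Y_K\setminus X_K$ in $\P^N_R$, and show $\X=\Y\setminus\mathbf Z$ has nonempty fibers by the dimension count $\dim\mathbf Z_v=\dim\mathbf Z_K<\dim\Y_K=\dim\Y_v$.

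This gap propagates into the rest of your argument. In your general case, spreading out to $R'=R[1/d]$ can indeed be arranged to give a faithfully flat $\X'$ (shrink further so all fibers are nonempty), but for the finitely many bad primes $v$ you still must produce a faithfully flat model over the DVR $R_v$ --- and ``the analogue of the same argument'' over a DVR is exactly the affine/gluing construction, so you again need an affine model over $R_v$ with nonempty special fiber, which your $B$-construction does not provide. Once the affine case is repaired (by the paper's projective-closure-plus-complement-removal argument or a comparable device), the rest of your strategy --- spread out over $R[1/d]$, glue in local models at the bad primes \`a la Lemma~\ref{gluing}, and use Nagata plus scheme-theoretic closure for the proper refinement --- is a reasonable alternative to the paper's route, which instead builds a global affine faithfully flat model and glues it onto a spread-out piece.
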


\begin{proof} The case when $X_K$ is projective is immediate: embed
$X_K$ into a projective space $\mathbb P^N_K$ and take the scheme-theoretic
closure $\X$ of $X_K$ in $\mathbb P^N_R$. As $\X$ is proper and flat
over $R$, it is faithfully flat.

Now suppose $X_K$ is affine.
Note that if we extend directly to $R$ by scaling a system of equations
defining $X_K$ and take the biggest closed subscheme flat over $R$, we
just get a flat, but not necessarily faithfully flat scheme over $R$.
Let $Y_K\subseteq \mathbb P^N_K$ be a
projective variety containing $X_K$ as a dense open subvariety. Consider
the scheme-theoretic closure $\Y$ of $Y_K$ in $\mathbb P^N_R$ and
${\mathbf Z}$ the scheme-theoretic closure of $Y_K\setminus X_K$ in
$\Y$. Then $\X:=\Y\setminus {\mathbf Z}$ is flat, separated and finite
type over $R$, with generic fiber equal to $X_K$. Let us show $\X$ is
faithfully flat. As $\Y, {\mathbf Z}$
are flat and proper over $R$, they are faithfully flat and
$$\dim{\mathbf Z}_v=\dim{\mathbf Z}_K < \dim \Y_K=\dim \Y_v, \quad
\forall v\in\Spec R.$$
Therefore $\X_v=\Y_v\setminus {\mathbf Z}_v\ne \emptyset$ and $\X$ is
faithfully flat.

In the general case, as $X_K$ is separated and
of finite type over $K$, it extends to a separated scheme $\X_1$ of
finite type and faithful flat over some open subset $V\subseteq \Spec R$
(\cite{EGA}, IV.8.8.2(ii)).
Let $U_K$ be a non-empty affine open subset of $X_K$. By the affine
case, there exists a faithfully flat model $\mathbf{U}$ of $U_K$ over
$R$. Shrinking $V$ if necessary, we can suppose
$U_K\subseteq X_K$ extends to an open immersion
$\mathbf{U}_{V} \hookrightarrow (\X_1)_{V}$. Let
$\X$ be the gluing of $\X_1$ and $\mathbf U$ along $\mathbf{U}_{V}$.
Then $\X$ is a model of $X_K$ over $R$.

Suppose $X_K$ is proper. The model $\X$ we obtained above is not
proper in general. But by Nagata's embedding theorem
(\cite{Lut}, \cite{Conrad2}), $\X$
is an open subscheme of some proper scheme $\mathbf Z$ over $R$.
As $\X$ is flat, it is contained in the scheme-theoretic
closure $\mathbf Z'$ of $X_F$ in $\mathbf Z$. Now
$\mathbf Z'$ is a proper model of $\X$ (it is faithfully flat over
$R$ by properness).
\end{proof}

\begin{rem}\label{smoothening}
Suppose $X_F$ is smooth. To study the integral points
of $X_F$ over $\go_S$
{when $X_F$ has local points at every $v\notin S$}, we can restrict ourselves to
smooth models in the sense that for any integral model $\X$
{such that $\X({\go}_v)\ne\emptyset$ for all $v\notin
  S$},
there exists a quasi-projective morphism
$\tilde{\X}\to \X$ of models of $X_F$ such that $\tilde{\X}$ is
smooth and the canonical maps
$\tilde{\X}(\go_S)\to \X(\go_S)$ and $\tilde{\X}({\go}_v)\to
\X({\go}_v)$, $v\notin S$, are bijective.
Indeed, $\X$ is already smooth over some dense open
subset $V$ of $\Spec(\go_S)$. By the gluing process \ref{gluing}, we
are reduced to the local case. Then the result follows from the smoothening
procedure, see \cite{BLR}, Theorems 3.1/3: if $\X'\to \X$ is a
smoothening, then take $\tilde{\X}$ equal to the smooth locus of
$\X'$. Note that for any closed point
$v\in\Spec(\go_S)$, we have $\tilde{\X}({\go}_v)=
\X({\go}_v)$ because the smoothening commutes with base
changes of index $1$ (see \cite{BLR}, Corollary 3.6/6).
{In particular
$\tilde{\X}({\go}_v)\ne\emptyset$ and $\tilde{\X}$ is faithfully flat
over $\go_{S}$.}
As a counterpart,
if we start with a proper model $\X$, the smoothening will
produce a non-proper smooth model in general.

If $X_F$ is furthermore a smooth algebraic group and
the model $\X$ is a (separated) group scheme, then
$\tilde{\X}$ can be chosen to be a smooth group scheme. This is a consequence
of the existence of group smoothening as shown in \cite{BLR}, 7.1/5.
The existence over a global Dedekind base scheme follows from
the local case by the gluing process as above. We also have
$\tilde{\X}({\go}_v)=\X({\go}_v)$ for all closed points $v$
because the group smoothening is
obtained by successive dilatations and the latter operations commute with
the passage to the completion.
\end{rem}

\begin{lem} \label{section-lg}
Let $\X$ be an integral model of $X_F$ over $\go_S$. Then the
canonical commutative diagram of injective maps
$$
\begin{CD}
\X(\go_S) @>>> \X(\A_{F,S}) \\
@VVV  @VVV \\
X_F(F) @>>> \prod_{v \in \Omega_F}  X_F(F_v)
\end{CD}
$$
allows us to identify $\X(\go_S)$ with
$X_F(F)\cap \X(\A_{F,S})$.
\end{lem}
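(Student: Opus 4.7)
The inclusion $\X(\go_S)\subseteq X_F(F)\cap\X(\A_{F,S})$ is formal: any integral section restricts to a rational point via the generic fibre $\go_S\hookrightarrow F$ and to local sections via each $\go_S\to\go_v$, while the injectivity of the four maps in the diagram follows from separatedness of $\X$ over $\go_S$ together with $F\hookrightarrow\prod_v F_v$. All the work lies in the reverse inclusion, which translates into the following concrete statement: if $P\in X_F(F)$ and if, for every $v\notin S$, the image $P_v\in X_F(F_v)$ lies in $\X(\go_v)$, then $P$ extends to a morphism $\Spec\go_S\to\X$.

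The plan is to produce the extension locally and then glue. First, since $\X$ is of finite type over $\go_S$, the standard spreading-out principle (\cite{EGA}, IV.8.8.2) extends $P\colon\Spec F\to\X$ to a morphism $\sigma_V\colon V\to\X$ on a dense open $V\subseteq\Spec\go_S$, whose complement consists of finitely many closed points $v_1,\dots,v_n$. At each $v_i$ the hypothesis supplies $P_{v_i}\colon\Spec\go_{v_i}\to\X$ whose generic fibre agrees with $P$ after the base change $F\hookrightarrow F_{v_i}$. The crucial local step is to replace this completion-level section by a genuine $\go_{F,v_i}$-section. Let $x\in\X$ be the image of the generic point of $\Spec F$ under $P$ and $y_i\in\X$ the image of the closed point of $\Spec\go_{v_i}$ under $P_{v_i}$; then $y_i$ is a specialisation of $x$, so any affine open $\Spec A\subseteq\X$ containing $y_i$ automatically contains $x$, and $P_{v_i}$ factors through $\Spec A$. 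The resulting ring map $A\to\go_{v_i}$ agrees with $A\xrightarrow{P}F\hookrightarrow F_{v_i}$, and the elementary identity $F\cap\go_{v_i}=\go_{F,v_i}$ inside $F_{v_i}$ forces $A\to F$ to land in $\go_{F,v_i}$. This yields the desired extension $\tau_i\colon\Spec\go_{F,v_i}\to\X$.

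Finally, I spread each $\tau_i$ out to a morphism on a Zariski neighbourhood $V_i\ni v_i$ in $\Spec\go_S$. Since $\X$ is separated over $\go_S$ and $\Spec\go_S$ is integral, two morphisms to $\X$ that agree at the generic point must coincide on any integral open; hence $\sigma_V$ and all the $\tau_i$ agree on their pairwise overlaps. The Zariski sheaf property of $\Hom(-,\X)$ then glues these data into a section $\sigma\colon\Spec\go_S=V\cup V_1\cup\cdots\cup V_n\to\X$ whose generic fibre is $P$, giving the required element of $\X(\go_S)$. The main obstacle is the descent step from $\go_{v_i}$-sections to $\go_{F,v_i}$-sections: this fails in general for arbitrary completion-level sections, and works here only because one is extending a prescribed rational point, which after reduction to an affine open turns the descent into the trivial identity $F\cap\go_{v_i}=\go_{F,v_i}$ in $F_{v_i}$.
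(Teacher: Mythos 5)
Your proof is correct and follows essentially the same route as the paper's own (very terse) argument: reduce the reverse inclusion to the observation that a rational point $\sigma\in X_F(F)$ with $\sigma\in\X(\go_v)$ actually lies in $\X(\go_{F,v})$ (which you justify carefully via the spectral reduction to an affine chart and the equality $F\cap\go_v=\go_{F,v}$), then spread out to Zariski neighbourhoods of the finitely many bad primes and glue using separatedness of $\X$. The only cosmetic difference is that you first spread $\sigma$ to a dense open $V$ and then patch at the finitely many remaining closed points, whereas the paper spreads out at every $v\notin S$ and glues the resulting family; both organizations are equivalent.
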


\begin{proof} Let $\sigma\in X_F(F)$ such that $\sigma\in \X({\go}_v)$
in $X_F(F_v)$ for all $v\not \in S$. Then $\sigma\in \X(\go_{F,v})$ for all
$v\not \in S$. Moreover $\sigma$ extends to a $U(v)\to \X$ for
some open neighborhood $U(v)$ of $v\in \Spec\go_S$ for all $v\not\in
S$. One concludes that $\sigma\in \X(\go_S)$ by gluing these morphisms.
\end{proof}

Let $B$ be an integral domain containing $\go_S$, with field of
fractions $L$.
Let $\X$ be an integral model of $X_F$ over $\go_S$. The base change
$\Spec B\to \Spec L$ induces canonically a map
$\X(B) \to X_F(L)$
(depending on the isomorphism $\X_F\simeq X_F$) which is injective
because $\X$ is separated over $\go_S$. For simplicity,  we will consider
$\X(B)$ as a subset of $X_F(L)$. By Lemma \ref{section-lg},
$\X(\go_S)$ can be identified to $X_F(F)\cap \X(\A_{F,S})$.
\medskip

Next we relate the analytic open subsets to integral points of
models.

\begin{prop} \label{integral-U}Let $R$ be a discrete valuation ring
with field of fractions $K$. Let $\hat{R}$ be its completion and
$\hat{K}=\mathrm{Frac}(\hat{R})$. Let $X_K$ be an algebraic variety
over $K$, let $q\in X_K(\hat{K})$ and let $U \subseteq X_K(\hat{K})$
be an open neighborhood of $q$ for the topology induced by that of $\hat{K}$.
Then there exists an integral model $\X$ of 
$X_K$ over $R$ such that $q\in \X(\hat{R})\subseteq U$.
\end{prop}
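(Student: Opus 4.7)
I plan to reduce to an affine open containing the image of $q$, construct an affine model there by translating and rescaling generators so as to pin $q$ into the integral locus of a prescribed neighborhood, and finally globalize via a blowup followed by trimming the special fiber.

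First, $q:\Spec(\hat K)\to X_K$ factors through some affine open $Y_K=\Spec(A)\subseteq X_K$. Since the analytic topology on $X_K(\hat K)$ refines the Zariski topology, $Y_K(\hat K)$ is open in $X_K(\hat K)$, so after replacing $U$ with $U\cap Y_K(\hat K)$ I may assume $U\subseteq Y_K(\hat K)$. Choose a presentation $A=K[a_1,\dots,a_n]/I$ and set $q_i:=q^*(a_i)\in\hat K$. Writing $\pi$ for a uniformizer of $R$ and rescaling each $a_i$ by a large enough power of $\pi$, I may assume $q_i\in\hat R$ for all $i$. The embedding $Y_K\hookrightarrow\mathbb A^n_K$ given by these generators induces the given topology on $Y_K(\hat K)$, so there is an integer $M\geq 0$ with
\[
U_M:=\{y\in Y_K(\hat K):v(y^*(a_i)-q_i)\geq M\text{ for all }i\}\subseteq U.
\]
Using the density of $R$ in $\hat R$, choose $c_i\in R$ with $v(q_i-c_i)\geq M$, set $b_i:=(a_i-c_i)/\pi^M\in A$, and form $B:=R[b_1,\dots,b_n]$, a finitely generated $R$-subalgebra of $A$ with $B\otimes_R K=K[b_1,\dots,b_n]=A$. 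Then $\Y':=\Spec B$ is an integral model of $Y_K$. The values $q^*(b_i)=(q_i-c_i)/\pi^M\in\hat R$ show $q\in\Y'(\hat R)$, and for any $y\in\Y'(\hat R)$ the identity $y^*(a_i)=\pi^M y^*(b_i)+c_i$ gives $v(y^*(a_i)-q_i)\geq M$, so $\Y'(\hat R)\subseteq U_M\subseteq U$.

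Next, I extend $\Y'$ to a model of $X_K$. By the previous proposition applied with the affine chart $Y_K$, there is a model $\X_0$ of $X_K$ over $R$ whose open corresponding to $Y_K$ is $\Y^0:=\Spec(R[a_1,\dots,a_n]/(I\cap R[a_1,\dots,a_n]))$. Now $\Y'$ is precisely the affine chart of the blowup of $\Y^0$ along the closed subscheme $Z$ defined by $(\pi^M,a_1-c_1,\dots,a_n-c_n)$ in which $\pi^M$ is the pivot coordinate. Because this ideal becomes the unit ideal in $A$, $Z$ has empty generic fiber, so its scheme-theoretic closure $\bar Z$ in $\X_0$ lies entirely in the special fiber. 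Setting $\X_1:=\mathrm{Bl}_{\bar Z}(\X_0)$ then gives a model of $X_K$ that contains $\Y'$ as an open subscheme. Finally, let
\[
\X:=\X_1\setminus\bigl((\X_1)_k\setminus(\Y')_k\bigr).
\]
Only special-fiber points are removed, so $\X$ is still a model of $X_K$ (faithful flatness survives since $(\Y')_k\neq\emptyset$) and its special fiber equals $(\Y')_k$. Therefore any morphism $\Spec\hat R\to\X$ sends the closed point into $\Y'$; since $\Y'$ is open in $\X$ and $\Spec\hat R$ is connected, the morphism must factor through $\Y'$. Hence $\X(\hat R)=\Y'(\hat R)\subseteq U$ and $q\in\X(\hat R)$.

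The hard part is this last globalization step, where I need $\X(\hat R)\subseteq U$ rather than merely $\Y'(\hat R)\subseteq U$. The key idea is to view $\Y'$ as an affine chart of a blowup whose center is contained in the special fiber of $\Y^0$: the same blowup lifts to any ambient model $\X_0$, producing an $\X_1\supseteq\Y'$, and trimming the extraneous special-fiber locus forces every $\hat R$-point of the result to factor through $\Y'$.
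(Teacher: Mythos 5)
Your proof is correct in its main ideas and follows the same broad strategy as the paper: pass to an affine chart containing $q$, build an affine integral model whose $\hat R$-points land in $U$, then extend to a model of the whole $X_K$ arranged so that every $\hat R$-point factors through the affine piece. Your affine step is essentially a single dilatation written out in coordinates; the paper reduces first to $R$ complete (so $q$ can be moved exactly to the origin) and then iterates dilatations at the specialization of $q$. Your use of the density of $R$ in $\hat R$ to pick approximating centers $c_i$ is a nice way to avoid passing to the completion. The paper instead globalizes first (producing $\X_0$ with $q\in\X_0(\hat R)$ by gluing an affine model $\mathbf W$ with $X_K$) and then shrinks by dilatation; because dilatation is intrinsically a global operation on $\X_0$, the globalization there is automatic, whereas you shrink first and must globalize afterwards.

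There is a small gap in your globalization step. The previous proposition (existence of models) does not produce a model of $X_K$ whose affine chart over $Y_K$ is the prescribed $\Y^0=\Spec(R[a_1,\dots,a_n]/(I\cap R[a_1,\dots,a_n]))$: its proof builds a different affine model $\mathbf U$ of $Y_K$ via a projective-closure argument, and there is no reason for $\mathbf U$ to coincide with $\Y^0$. You would have to produce such an $\X_0$ by hand, e.g.\ by gluing $\Y^0$ with $X_K$ along $Y_K$ (valid here because your rescaling of the $a_i$ makes $q\in\Y^0(\hat R)$, so $\Y^0$ is faithfully flat and the glued scheme is a genuine model). But once you are gluing anyway, the whole blowup-and-trim mechanism becomes unnecessary: simply glue $\Y'$ with $X_K$ along $Y_K$. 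The resulting $\X$ has $\Y'$ as an open subscheme containing the entire special fiber $\X_k=(\Y')_k$, so by your own connectedness argument every $\Spec\hat R\to\X$ factors through $\Y'$ and $\X(\hat R)=\Y'(\hat R)$. This is the same gluing the paper uses to build its $\X_0$; with your ordering of the two steps it finishes the proof in one stroke, and I would recommend replacing the blowup argument with it.
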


\begin{proof} First let us prove that if $\X_0$ is a model of $X_K$ over $R$ such that
$q\in \X_0(\hat{R})$, then there exists a finite sequence
of dilatations (\cite{BLR}, \S 3.2)
$$\X_d\to \X_{d-1} \to \cdots \to \X_0$$
such that $\X_d(\hat{R})\subseteq U$.
Let $x_0\in (\X_0)_k(k)$ be the specialization of $q$
({\it i.e.} the image of the closed point of $\Spec(\hat{R})$ by
$q : \Spec(\hat{R})\to \X_0$).
Consider $\X_1\to \X_0$ the dilatation of $\{x_0\}=\Spec k(x_0)$ on $\X_0$
(see \cite{BLR}, \S 3.2). By the universal property of the
dilatation, the image of the (injective) canonical map
$\X_1(\hat{R})\to \X_0(\hat{R})$ consists of the points of
$X_K(\hat{K})$ whose specializations in $(\X_0)_k(k)$ are equal to $x_0$.
In particular, $q\in \X_1(\hat{R})$.
We define inductively a sequence $\X_{i+1}\to \X_i$ of dilatation of
$x_i$ on $\X_i$, where $x_i\in (\X_i)_k(k)$ is the specialization
of $q\in \X_i(\hat{R})$.
Let us show that $\X_d(\hat{R})\subseteq U$ when $d$ is big enough.
As the dilatation commutes
with flat base change (\cite{BLR}, 3.2/2(b)), we can suppose $R$ is
complete.

By construction, for any closed subscheme $Z_i$ of $(\X_i)_k$ and
for any open subscheme $\mathbf W$ of $\X_i$,
the dilatation of $Z_i\cap {\mathbf W}$ on $\mathbf W$
is the restriction to $\mathbf W$ of the dilatation $\X_{i+1}\to \X_i$.
So we can suppose $\X_0$, and hence $X_K$,
are affine. We have a closed immersion 
$\iota \colon \X_0\hookrightarrow {\mathbf A}_0:=\Spec R[x_1,\dots, x_n]$
such that $\iota(q)$ is the origin $(0, \dots, 0)$.
Let $t$ be a uniformizing element of $R$.
For any integer $i\ge 0$, let $D(0, i)=t^iR^n\subset {\mathbf A}_0(R)$.
Then by the definition of the topology on $X_K(K)$, there exists
$d\ge 0$ such that $\iota_K^{-1}(D(0, d))\subseteq U$.

By the construction of the dilatation (\cite{BLR}, 3.2/2(b)), $\iota$
induces a closed immersion
$$\iota_d \colon \X_d\hookrightarrow
{\mathbf A}_d:=\Spec R[x_1/t^d, \dots, x_n/t^d]$$ and
$$\X_d(R)\subseteq \iota_d^{-1}({\mathbf A}_d(R)) \cap X_K(K)
=\iota_K^{-1}(D(0, d))\cap X_K(K) \subseteq U.$$

Now to prove the proposition, it is enough to find a model $\X_0$ of $X_K$ over $R$ such that
$q\in \X_0(\hat{R})$. Let $W_K\subseteq X_K$ be an affine Zariski open neighborhood
of $q\in X_K(\hat{K})$. Then $W_K$ extends to an affine model
$\mathbf W$. Up to scaling a system of coordinates on $W_K$,
we can suppose that $q\in \mathbf W(\hat{R})$. Now we can glue $X_K$ and
${\mathbf W}$ along $W_K$ to get an integral model $\X_0$ of $X_K$
over $R$ with $q\in \X_0(\hat{R})$.
\end{proof}


\begin{cor}\label{U-model} Let $X_F$ be an algebraic variety over $F$.
Let $U \subseteq X_F(\mathbb A_F^S)$ be an open subset and let $z\in
U$. Then there exists an integral model $\X$ of
$X_F$ over $\go_S$ such that
$$z\in \prod_{v \notin S}\X({\go}_v)\subseteq U. $$
In other words, the subsets $ \prod_{v \notin S}\X({\go}_v) \subset X_F(\A_F^S)$,
where $\X$ runs through the integral models of $X_F$ over $\go_S$,
form a basis of the topological space $X_F(\A_F^S)$.

Moreover, if $X_F$ is a (smooth) algebraic group over $F$, then 
the subsets of the form $\prod_{v\notin S}\X(\go_v)$, where
$\X$ is a (smooth) group scheme model of $X_F$ over $\go_S$, form
a fundamental system of neighborhood of $1$.
\end{cor}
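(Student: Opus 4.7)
The plan is to combine the local approximation result (Proposition~\ref{integral-U}) with the gluing procedure (Lemma~\ref{gluing}) over the finitely many places where $U$ refines a ``standard'' neighborhood coming from a fixed global integral model.

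First I would unpack the definition of the adelic topology on $X_F(\A_F^S)$. Starting from a given $z=(z_v)_{v\notin S}\in U$, I would fix a global integral model $\X_0$ of $X_F$ over $\go_S$ (which exists by the previous proposition) and observe that, by construction of the topology, there exist a finite subset $T\subseteq\Omega_F\setminus S$ and open neighborhoods $U_v\subseteq X_F(F_v)$ of $z_v$ for $v\in T$, such that
$$\prod_{v\in T}U_v\times \prod_{w\notin S\cup T}\X_0(\go_w)\subseteq U.$$
After enlarging $T$ if necessary, we may further assume $z_w\in\X_0(\go_w)$ for every $w\notin S\cup T$.

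Next, for each $v\in T$, I would apply Proposition~\ref{integral-U} with $R=\go_{F,v}$, $\hat R=\go_v$, $K=F$, $\hat K=F_v$ and $q=z_v\in X_F(F_v)$, obtaining a local integral model $\X_v$ of $X_F$ over $\go_{F,v}$ with $z_v\in\X_v(\go_v)\subseteq U_v$. I would then apply Lemma~\ref{gluing} with $V=\Spec\go_S\setminus T$, $\Y=(\X_0)_V$, and the local pieces $\X_v$ glued along the tautological identifications of generic fibers, to produce a global integral model $\X$ of $X_F$ over $\go_S$ with $\X_V\cong(\X_0)_V$ and $\X_{\go_{F,v}}\cong\X_v$ for $v\in T$. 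By construction $\X(\go_w)=\X_0(\go_w)\ni z_w$ for $w\notin S\cup T$ and $\X(\go_v)=\X_v(\go_v)\subseteq U_v$ for $v\in T$, so $z\in\prod_{v\notin S}\X(\go_v)\subseteq U$, which is the first assertion.

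For the group scheme statement, I would take $z=1$ and run the same argument starting from an \emph{a priori} group scheme model $\X_0$, obtained by spreading out the group operations from the generic fiber together with the group-scheme version of Lemma~\ref{gluing}. At each $v\in T$, the dilatations used in the proof of Proposition~\ref{integral-U} should be performed along the identity section of $\X_0\times\go_{F,v}$; dilatations along a section of a group scheme remain in the category of group schemes, so each $\X_v$ is a group scheme, and then the group-scheme version of Lemma~\ref{gluing} produces a group scheme $\X$ over $\go_S$. If $X_F$ is smooth, a final application of the group smoothening discussed in Remark~\ref{smoothening} replaces $\X$ by a smooth group scheme model without changing any of the sets $\X(\go_v)$. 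The main obstacle to record carefully here is precisely the fact that, for $z=1$, the successive specializations $x_i$ of the identity in the proof of Proposition~\ref{integral-U} are exactly the identity sections, so the whole tower of dilatations is a tower of congruence-type dilatations along these sections; this is what guarantees that the resulting $\X(\go_v)$ form arbitrarily small open neighborhoods of~$1$ while remaining groups.
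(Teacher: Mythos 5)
Your proof is correct and follows essentially the same route as the paper's: unpack the adelic topology to reduce to finitely many places, apply Proposition~\ref{integral-U} at each of those places, and glue via Lemma~\ref{gluing}. The only cosmetic difference is that you first fix a global model $\X_0$ over $\go_S$ and express the basic neighborhood in terms of $\prod_w\X_0(\go_w)$, whereas the paper extracts directly from the definition of the adelic topology a model $\Y$ over $\go_T$ for some $T\supseteq S$; these formulations are interchangeable because any two models coincide outside a finite set of places. Your treatment of the group-scheme case (dilatations along the unit section produce group schemes, group-scheme gluing, and group smoothening via Remark~\ref{smoothening}) also mirrors what the paper sketches.
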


\begin{proof} {By the definition of the adelic topology
  (\cite{Conrad}, \S 4), there exists an integral model $\Y$ of $X_F$
over $\go_T$ for some finite subset $T\supseteq S$ of $\Omega_F$
such that
$$U\supseteq \prod_{v\in T\setminus S} U_v \times \prod_{v\notin T} \Y(\go_v)$$
for some non-empty open subsets $U_v$ of $X_F(F_v)$.}

Write $z=(z_v)_{v\not\in S}$. For any $v\in T\setminus S$,
there exists an integral model $\X_v$ of $X_F$ over $\go_{F,v}$ such that
$z_v\in {\X}_v({\go}_v)\subseteq U_v$ by Proposition~\ref{integral-U}(1).
Using Lemma~\ref{gluing}, we obtain an integral model $\X$ of
$X_F$ over $\go_S$ as required by gluing the schemes $\X_{v}$, $v\in T\setminus S$ with $\Y\times_{\go_S} \go_T$. 

The case of algebraic groups is proved in a similar 
way using dilatations along the unit section, see Remark~\ref{smoothening}.
\end{proof}

\subsection{Applications to strong approximations} 

Now we can give a Diophantine interpretation of strong approximation.

\begin{thm} \label{Diophantine-sap} Let $X_F$ be an algebraic variety
over $F$ and let $S$ be a non-empty finite subset of $\Omega_F$
containing $\infty_F$. Then
$X_F$ satisfies SAP-BM off $S$ if and only if the Brauer-Manin obstruction is
the only obstruction for the existence of integral points
for every integral model $\X$ of $X_F$ over $\go_S$.
\end{thm}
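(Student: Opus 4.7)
The plan is to exploit the two structural results already proved in this section: Corollary~\ref{U-model}, which tells us that the subsets $\prod_{v\notin S}\X(\go_v)$ form a basis for the topology of $X_F(\A_F^S)$ as $\X$ ranges over integral models of $X_F$ over $\go_S$, and Lemma~\ref{section-lg}, which identifies $\X(\go_S)$ with $X_F(F)\cap \X(\A_{F,S})$. Together these should reduce the theorem to a bookkeeping exercise matching open neighborhoods in $X_F(\A_F^S)$ with integral models.

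For the direction ``SAP-BM off $S$ $\Rightarrow$ Brauer-Manin is the only obstruction for every model $\X$'', I would start with any integral model $\X$ and a point $(x_v)_v\in \X(\A_{F,S})^{\Br(X_F)}$. Since $\X(\A_{F,S})\subseteq X_F(\A_F)$, this point also lies in $X_F(\A_F)^{\Br(X_F)}$, so its projection $z=P_S((x_v))$ belongs to $P_S[X_F(\A_F)^{\Br(X_F)}]$, with $\prod_{v\notin S}\X(\go_v)$ serving as an open neighborhood of $z$. By the SAP-BM hypothesis, the diagonal image of $X_F(F)$ meets this neighborhood, yielding $\sigma\in X_F(F)$ whose local component at every $v\notin S$ lies in $\X(\go_v)$. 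Lemma~\ref{section-lg} then gives $\sigma\in \X(\go_S)$, so $\X(\go_S)\ne\emptyset$.

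For the converse, given $z\in P_S[X_F(\A_F)^{\Br(X_F)}]$ and an arbitrary open neighborhood $U$ of $z$ in $X_F(\A_F^S)$, I would first lift $z$ to some $(x_v)_{v\in\Omega_F}\in X_F(\A_F)^{\Br(X_F)}$ and then invoke Corollary~\ref{U-model} to obtain an integral model $\X$ of $X_F$ over $\go_S$ with $z\in\prod_{v\notin S}\X(\go_v)\subseteq U$. By construction $(x_v)_v\in\X(\A_{F,S})^{\Br(X_F)}$, so this set is nonempty, and the hypothesis applied to $\X$ delivers $\sigma\in\X(\go_S)\subseteq X_F(F)$ whose diagonal image lies in $\prod_{v\notin S}\X(\go_v)\subseteq U$. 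Hence every such neighborhood meets $X_F(F)$, giving the required density.

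No serious obstacle remains once Corollary~\ref{U-model} and Lemma~\ref{section-lg} are available; the only care needed is in tracking the various embeddings $X_F(F)\hookrightarrow X_F(\A_F)$, $\X(\A_{F,S})\hookrightarrow X_F(\A_F)$, and $\X(\go_v)\hookrightarrow X_F(F_v)$, and in noting that the Brauer-Manin pairing is computed the same way on $\X(\A_{F,S})$ as on $X_F(\A_F)$ since the group $\Br(X_F)$ and the local invariants $\inv_v$ do not depend on the chosen integral model.
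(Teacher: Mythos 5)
Your proof is correct and follows essentially the same route as the paper's: both directions hinge on Corollary~\ref{U-model} (that the sets $\prod_{v\notin S}\X(\go_v)$ are open and form a basis for $X_F(\A_F^S)$) and on Lemma~\ref{section-lg} (the identification $\X(\go_S)=X_F(F)\cap\X(\A_{F,S})$). The paper phrases the forward direction a bit more compactly by writing $\X(\go_S)=X_F(F)\cap P_S[\X(\A_{F,S})^{\Br(X_F)}]$ directly and noting the latter is open in $X_F(\A_F)^{\Br(X_F)}$, but the content is the same as your step-by-step version.
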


\begin{proof} ($\Rightarrow$) Let $\X$ be an integral model of
$X_F$ over $\go_S$ and suppose that
$$\X(\A_{F,S})^{\Br(X_F)} \neq \emptyset.$$
This set being open (with respect to the adelic topology) in $X_F(\mathbb A_F)^{\Br(X_F)}$, we have
$$\X(\go_S)=X_F(F) \cap P_S[\X(\A_{F,S})^{\Br(X_F)}]  \neq \emptyset $$
by hypothesis on $X_F$. Hence the Brauer-Manin obstruction is
the only obstruction for the existence of integral points
on $\X$.
\medskip

($\Leftarrow$) Let $U$ be an open subset of $X_F(\mathbb A^S_{F})$ such that
$$U \cap P_S(X_F(\mathbb A_F)^{\Br(X_{F})}) \neq \emptyset.$$
Let $z$ be a point of this set.
By Corollary~\ref{U-model}, there exists an integral model
$\X$ over $\go_S$ such that
$$z\in \prod_{v \not \in S} \X({\go}_v)\subseteq U . $$ This
implies that
$$\X(\A_{F,S})^{\Br(X_F)}\ne\emptyset.$$
By the hypothesis, one obtains that $\X(\go_S)\ne \emptyset$. As the
natural map $\X(\go_S)\to \X(\mathbb A_F^S)$ has image in
$\prod_{v \not \in S} \X({\go}_v)\subseteq U$ and
$\X(\go_S)\subseteq X_F(F)$, one concludes that $U$ meets the image of $X_F(F)$
in $X(\mathbb A_F^S)$ and the theorem is proved.
\end{proof}

\begin{cor}\label{bijective} VSAP-BM off $S$ holds for $X_F$
if and only if the diagonal map
$$\X(\go_S)\to P_S[\X(\A_{F,S})^{\Br(X_F)}] $$
is bijective for every model $\X$ of $X_F$ over $\go_S$.
\end{cor}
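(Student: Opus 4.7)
The plan is to mimic the proof of Theorem~\ref{Diophantine-sap}, upgrading density to equality. The two key ingredients are Lemma~\ref{section-lg}, which identifies $\X(\go_S)$ with $X_F(F)\cap \X(\A_{F,S})$, and Corollary~\ref{U-model}, which lets one realize any given point of $X_F(\A_F^S)$ inside $\prod_{v\notin S}\X(\go_v)$ for a suitable choice of model $\X$. The diagonal map $\X(\go_S)\to P_S[\X(\A_{F,S})^{\Br(X_F)}]$ is well-defined by Artin reciprocity, and it is injective by separatedness of $X_F$: since $S$ is finite while $\Omega_F$ is infinite, there is $v\notin S$, and $X_F(F)\hookrightarrow X_F(F_v)$ is injective. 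Hence only surjectivity is at stake in each direction.

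Assume first that $X_F$ satisfies VSAP-BM off $S$, fix a model $\X$, and let $z\in P_S[\X(\A_{F,S})^{\Br(X_F)}]$. Choose a lift $y\in\X(\A_{F,S})^{\Br(X_F)}\subseteq X_F(\A_F)^{\Br(X_F)}$; by VSAP-BM there exists $\sigma\in X_F(F)$ whose diagonal image in $X_F(\A_F^S)$ equals $P_S(y)=z$. For each $v\notin S$ the image of $\sigma$ in $X_F(F_v)$ coincides with $y_v\in\X(\go_v)$, so Lemma~\ref{section-lg} promotes $\sigma$ to an element of $\X(\go_S)$, which visibly maps to $z$.

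Conversely, assume the diagonal map is bijective for every model, and let $z\in P_S[X_F(\A_F)^{\Br(X_F)}]$ with lift $y\in X_F(\A_F)^{\Br(X_F)}$. Corollary~\ref{U-model}, applied to $z\in X_F(\A_F^S)$ (viewed as an open set in itself), produces an integral model $\X$ of $X_F$ over $\go_S$ such that $z\in\prod_{v\notin S}\X(\go_v)$; then $y\in\X(\A_{F,S})^{\Br(X_F)}$ and hence $z\in P_S[\X(\A_{F,S})^{\Br(X_F)}]$. The surjectivity hypothesis for this $\X$ then yields $\sigma\in\X(\go_S)\subseteq X_F(F)$ mapping to $z$, so $z\in X_F(F)$. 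The whole argument is a bookkeeping exercise, with no real obstacle beyond tracking which ambient set each point lives in; the technical content is already entirely packaged in Lemma~\ref{section-lg} and Corollary~\ref{U-model}.
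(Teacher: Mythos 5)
Your proof is correct and follows essentially the same approach as the paper's: the forward direction lifts a point of $P_S[\X(\A_{F,S})^{\Br(X_F)}]$, applies VSAP-BM to obtain a rational point, and uses Lemma~\ref{section-lg} to promote it to an integral section; the converse applies Corollary~\ref{U-model} to reduce a given point of $P_S[X_F(\A_F)^{\Br(X_F)}]$ to the integral-points picture for a suitable model. The only difference is cosmetic — you spell out the injectivity of the diagonal map (via separatedness of $\X$ over $\go_S$ and of $X_F$ over $F$, using the existence of a place outside $S$), which the paper treats as immediate.
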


\begin{proof} ($\Rightarrow$) We only have to show that the diagonal map is surjective. Let
$$x\in\X(\A_{F,S})^{\Br(X_F)}\subseteq X_F(\A_F)^{\Br(X_F)}. $$
Then there exists $\sigma\in X_F(F)$ such that for all $v\notin S$,
$$\sigma_v=x_v\in \X({\go}_v)\cap X_F(F)=\X(\go_{F,v}),$$
where $\go_{F,v}$ is the local ring of $F$ at $v$. 
Hence $\sigma\in \X(\A_{F,S})\cap X_F(F)=\X(\go_S)$
and $\sigma=P_S(x)$ under the diagonal map.

($\Leftarrow$) This follows from Corollary \ref{U-model}.
\end{proof}

\section{Very strong approximation with Brauer-Manin obstruction}

A smooth curve $X_F$ is called hyperbolic if it is the complement of a
reduced separable effective divisor of degree $d\geq 0$ {on} a projective curve
of genus $g$ such that $2g-2+d>0$. By Siegel's theorem for integral
points and Faltings' theorem on Mordell's conjecture,
$\X(\go_S)$ is finite for any integral model $\X$ of $X_F$ over $\go_S$.
If $X_F$ satisfies SAP-BM off $S$ (Definition~\ref{def-BM}), then the diagonal map
$$ \X(\go_S) \to P_S[\X(\A_{F,S})^{\Br(X_F)}] $$
is bijective for any integral model $\X$ of $X_F$ over $\go_S$. By
Corollary~\ref{bijective},  $X_F$ satisfies the VSAP-BM off $S$. However, Harari
and Voloch gave an explicit example  at the end of \cite{HaVo} to
explain that the strong approximation property with Brauer-Manin obstruction
does not hold in general.

Let us first prove some general properties regarding the very strong
approximation property. Recall that an immersion is a closed immersion followed by an open immersion
(\cite{EGA}, I.4.2.1).

\begin{prop}\label{BM-VSAP-im}
Let $f: X_F\to Y_F$ be an immersion of algebraic varieties over a global
field $F$. Suppose that VSAP-BM off $S$ holds for $Y_F$,
then the same holds for $X_F$. \end{prop}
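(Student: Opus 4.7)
The inclusion $X_F(F) \subseteq P_S[X_F(\A_F)^{\Br(X_F)}]$ is automatic from Artin reciprocity, so the content is the reverse inclusion. My approach is to push a point through $f$ using functoriality of the Brauer group, apply VSAP-BM to $Y_F$, and then descend the resulting rational point back to $X_F$ using the fact that an immersion is (locally) a closed immersion followed by an open immersion.

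First I would take $(x_v)_{v\in\Omega_F} \in X_F(\A_F)^{\Br(X_F)}$ and consider its image $(f(x_v))_v \in Y_F(\A_F)$. The pullback $f^* : \Br(Y_F)\to\Br(X_F)$ gives, for every $\xi\in\Br(Y_F)$ and every $v$, the equality $\inv_v(\xi(f(x_v)))=\inv_v(f^*(\xi)(x_v))$, so the sum vanishes and $(f(x_v))_v \in Y_F(\A_F)^{\Br(Y_F)}$. By VSAP-BM for $Y_F$, there is a $y\in Y_F(F)$ with $y=f(x_v)$ in $Y_F(F_v)$ for all $v\notin S$.

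Next I would show that $y$ lies in $X_F(F)$ (viewing $X_F$ as a locally closed subscheme of $Y_F$ via $f$). Write $f(X_F)=Z\setminus W$ with $Z\hookrightarrow Y_F$ a closed immersion and $W\hookrightarrow Z$ a closed immersion. Pick any $v_0\notin S$ (such $v_0$ exists because $\Omega_F$ is infinite while $S$ is finite). Then $y\in X_F(F_{v_0})\subseteq Z(F_{v_0})$, so the fiber product $\Spec F \times_{Y_F} Z$, which is a closed subscheme of $\Spec F$, acquires an $F_{v_0}$-point, hence is all of $\Spec F$; thus $y\in Z(F)$. If $y$ were in $W(F)$ it would lie in $W(F_{v_0})$, contradicting $y\in X_F(F_{v_0})=Z(F_{v_0})\setminus W(F_{v_0})$; so $y\in X_F(F)$. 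Finally, since $X_F\hookrightarrow Y_F$ is injective on $F_v$-points, the equality $y=f(x_v)$ in $Y_F(F_v)$ gives $y=x_v$ in $X_F(F_v)$ for each $v\notin S$, i.e.\ $P_S((x_v)_v)=y\in X_F(F)$.

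The only slightly delicate step is the descent of $y$ from $Y_F(F)$ to $X_F(F)$, which relies on the scheme-theoretic nature of immersions together with the fact that $\Omega_F\setminus S$ is non-empty. Everything else is routine functoriality.
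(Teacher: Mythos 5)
Your proof is correct and follows essentially the same strategy as the paper's: push the adelic point through $f$ by functoriality of the Brauer pairing, apply VSAP-BM to $Y_F$ to produce $y\in Y_F(F)$, and then descend $y$ to an $F$-point of $X_F$ by exploiting the immersion hypothesis at a single place $v_0\notin S$. The only cosmetic difference is in the descent step: you factor the immersion as $Z\setminus W$ with $Z$ closed in $Y_F$ and $W$ closed in $Z$ and argue via fiber products, whereas the paper argues directly that $\cO_{Y_F,y}\to\cO_{X_F,x_{v_0}}$ is surjective so the evaluation map $\cO_{Y_F,y}\to F_{v_0}$ already lands in $F$; both express the same underlying fact.
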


\begin{proof} We have the following commutative diagram
$$
\begin{CD}
X_F(F)  @>f>> Y_F(F) \\
@VVV  @VVV \\
X_F(\mathbb A_F)^{\Br(X_F)}  @>>> Y_F(\mathbb A_F)^{\Br(Y_F)} \subseteq Y_F(\mathbb A_F) \\
 @VP_SVV @VVP_SV  \\
 P_S[X_F(\mathbb A_F)^{\Br(X_F)}]  @>>> P_S [Y_F(\mathbb A_F)^{\Br(Y_F)}]
\subseteq Y_F(\mathbb A_F^S) \\
\end{CD}
$$
where the horizontal arrows are injective (the last two ones are injective
because $X_F(\A_F)\to Y_F(\A_F)$ and $X_F(\A_F^S)\to Y_F(\A_F^S)$ are injective).
Here we use the hypothesis that $f$ is an immersion (or at least
universally injective).
Moreover, if we consider $X_F(F), Y_F(F)$ and $X_F(\A_F^S)$ as subsets of $Y_F(\A_F^S)$, then
$Y_F(F) \cap X_F(\A_F^S)=X_F(F)$. Indeed, let
$y \in Y_F(F)$ be such that $(y)_v=x_v$ in $Y_F(F_v)$ with
$x_v\in X_F(F_v)$ for all $v\notin S$. Fix a $v_0\notin S$, then
$(y)_{v_0}\in X_F(F_{v_0})\cap Y_F(F)=X_F(F)$.
So $y\in X_F(F)$.
In a more algebraic setting,
we are saying that some $F$-algebra homomorphism
$\cO_{Y_F, y}\to F$ fits into a commutative diagram
$$
\begin{CD}
\cO_{Y_F,y} @>y>> F \\
@VVV @VVV \\
\cO_{X_F,x_{v_0}} @>x_{v_0}>> F_v
\end{CD}
$$
where the left vertical arrow is surjective (here we use the hypothesis
$f$ is an immersion). So the bottom arrow
takes values in $F$ and $x_{v_0}\in X_F(F)$. This implies immediately
the proposition.
\end{proof}

\begin{prop} \label{BM-VSAP-prod}
Let $X_F, Y_F$ be algebraic varieties over a global field $F$
such that $X_F(F), Y_F(F)\ne\emptyset$.
Then $X_F\times_F Y_F$ satisfies SAP-BM (resp. VSAP-BM) off S if and
only if the same holds for both $X_F$ and $Y_F$.
\end{prop}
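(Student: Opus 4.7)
My plan is to reduce both implications to the individual factors via the projections $\pi_X$, $\pi_Y$ and via the sections $\sigma_{x_0}: Y_F\hookrightarrow X_F\times_F Y_F$, $\sigma_{y_0}: X_F\hookrightarrow X_F\times_F Y_F$ induced by fixed rational points $x_0\in X_F(F)$, $y_0\in Y_F(F)$ (whose existence is exactly the role of the hypothesis). Throughout I will use the identification $(X_F\times_F Y_F)(\A_F^S)\cong X_F(\A_F^S)\times Y_F(\A_F^S)$ as topological spaces, which gives a basis of opens of the product form $U_X\times U_Y$.

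For ($\Leftarrow$), suppose both factors satisfy the relevant approximation property. Given any $(x,y)\in(X_F\times_F Y_F)(\A_F)^{\Br(X_F\times_F Y_F)}$, pullback along $\pi_X$ sends $\Br(X_F)$ into $\Br(X_F\times_F Y_F)$ compatibly with evaluation, so
\[
\sum_v\inv_v(\xi(x_v)) = \sum_v\inv_v((\pi_X^*\xi)(x_v,y_v)) = 0
\]
for every $\xi\in\Br(X_F)$. Hence $x\in X_F(\A_F)^{\Br(X_F)}$, and symmetrically $y\in Y_F(\A_F)^{\Br(Y_F)}$. For VSAP-BM this immediately yields $P_S(x)\in X_F(F)$ and $P_S(y)\in Y_F(F)$, whence $P_S(x,y)\in(X_F\times_F Y_F)(F)$. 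For SAP-BM, given an open $W\subseteq (X_F\times_F Y_F)(\A_F^S)$ meeting the Brauer-Manin locus, I shrink $W$ to a product $U_X\times U_Y$ containing some $(P_S(x),P_S(y))$; then SAP-BM for $X_F$ and $Y_F$ respectively provides $x'\in X_F(F)\cap U_X$ and $y'\in Y_F(F)\cap U_Y$, so $(x',y')\in(X_F\times_F Y_F)(F)\cap W$.

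For ($\Rightarrow$), the VSAP-BM statement follows at once from Proposition~\ref{BM-VSAP-im} applied to the closed immersion $\sigma_{y_0}$ (and symmetrically to $\sigma_{x_0}$). For SAP-BM, let $U_X\subseteq X_F(\A_F^S)$ be open and pick $\alpha=P_S(x)\in U_X\cap P_S[X_F(\A_F)^{\Br(X_F)}]$. The key computation is that for every $\eta\in\Br(X_F\times_F Y_F)$,
\[
\eta(x_v,(y_0)_v)=(\sigma_{y_0}^*\eta)(x_v),
\]
so $\sum_v\inv_v(\eta(x_v,(y_0)_v))=0$ because $\sigma_{y_0}^*\eta\in\Br(X_F)$ and $x$ is orthogonal to $\Br(X_F)$. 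Hence $(x,(y_0)_v)$ lies in $(X_F\times_F Y_F)(\A_F)^{\Br(X_F\times_F Y_F)}$, and its projection $(\alpha,y_0)$ sits in the open product $U_X\times Y_F(\A_F^S)$. Applying SAP-BM for $X_F\times_F Y_F$ to this product open produces a rational point $(x',y')$ whose first coordinate lies in $X_F(F)\cap U_X$; the argument for $Y_F$ is symmetric.

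I do not expect a genuine obstacle here: the only mildly delicate ingredients are the product-topology identification of the adelic points and the functoriality of $\inv_v$ along $\pi_X$ and $\sigma_{y_0}$, both of which are standard. The assumption $X_F(F),Y_F(F)\neq\emptyset$ is used precisely to supply the sections $\sigma_{x_0},\sigma_{y_0}$ needed in the ($\Rightarrow$) direction.
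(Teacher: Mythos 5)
Your proof is correct and follows essentially the same route as the paper's: the $(\Leftarrow)$ direction uses the functorial map from the Brauer--Manin set of the product into the product of the individual Brauer--Manin sets together with the product topology on $(X_F\times_F Y_F)(\A_F^S)$, and the $(\Rightarrow)$ direction uses the sections $\sigma_{y_0},\sigma_{x_0}$ coming from rational points (invoking Proposition~\ref{BM-VSAP-im} for VSAP-BM and the orthogonality of $(x,(y_0)_v)$ to $\Br(X_F\times_F Y_F)$ for SAP-BM). You spell out a bit more explicitly why $(x,(y_0)_v)$ lands in the Brauer--Manin set of the product, but this is the same argument as the paper's.
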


\begin{proof} By functoriality, the projections to $X_F$ and $Y_F$
induce a continuous map
$$ (X_F\times_F  Y_F)(\A_F)^{\Br(X_F\times_F Y_F)} \to
X_F(\A_F)^{\Br(X_F)}\times Y_F(\A_F)^{\Br(Y_F)}$$
which is injective as both sides are subsets of $X_F(\A_F)\times Y_F(\A_F)$.
They both contain $(X_F\times_F Y_F)(F)$. The same holds when
we project to $X_F(\A_F^S)\times Y_F(\A_F^S)$. So if $X_F$ and $Y_F$ satisfy
VSAP-BM off $S$, then so does $X_F\times_F Y_F$. Suppose $X_F, Y_F$ satisfy
SAP-BM off $S$.
As $(X_F\times_F Y_F)(\A_F)$ is canonically homeomorphic to the
product $X_F(\A_F)\times Y_F(\A_F)$,
any non-empty open subset of $P_S[(X_F\times_F Y_F)(\A_F)^{\Br(X_F\times_F Y_F)}]$
is the trace of some $U\times V$ with $U, V$ respective open subsets
of $X_F(\A_F^S)$ and $Y_F(\A_F^S)$ such that $U\cap P_S(X_F(\A)^{\Br(X_F)})$
and $V\cap P_S(Y_F(\A)^{\Br(Y_F)})$ are non-empty. Thus we find rational points
$x\in X_F(F)\cap U$ and $y\in Y_F(F)\cap V$, therefore
$(x,y)\in (X_F\times_F Y_F)(F)\cap (U\times V)$ and $X_F\times_F Y_F$ satisfies
SAP-BM off $S$.

Now we notice that a rational point $y_0\in Y_F(F)$ gives rise
to a closed immersion
$$s_{y_0}: X_F\to X_F\times_F Y_F, \quad x\mapsto (x, y_0)$$
by base change. Thus $X_F$ (and similarly, $Y_F$) satisfies VSAP-BM
off $S$
if $X_F\times_F Y_F$ does (Proposition~\ref{BM-VSAP-im}). Suppose now
that $X_F\times_F Y_F$ satisfies SAP-BM off $S$. Let $U$ be a non-empty subset
of $P_S[X_F(\A_F)^{\Br(X_F)}]$. The intersection
$$(U\times P_S[Y_F(\A_F)^{\Br(Y_F)})]\cap P_S[(X_F\times_F Y_F)(\A_F)^{\Br(X_F\times_F Y_F)}]$$
is non-empty as it contains $U\times \{ y_0\}$,
thus contains a point $(x,y)\in (X_F\times_F Y_F)(F)$. By construction,
$x\in U\cap X_F(F)$, and $X_F$ satisfies SAP-BM
off $S$.
\end{proof}

Note that for projective smooth and geometrically integral varieties,
Skorobogatov and Zarhin proved a stronger result
(\cite{SZ}, Theorem C) than Proposition \ref{BM-VSAP-prod}.
\medskip

The simplest class for which the very strong approximation property with
Brauer-Manin obstruction holds are zero-dimensional schemes over $F$.

\begin{prop} Let $\Xi$ be the set of the complex places
    of $F$ if $F$ is a number field and $\Xi=\emptyset$ otherwise.
If $X_F=X_1\coprod X_2$ is a disjoint union of two open subschemes, then
$$P_{\Xi}[X_F(\mathbb A_F)^{\Br(X)}]=P_{\Xi}[X_1(\mathbb
A_F)^{\Br(X_1)}]\coprod P_{\Xi}[X_2(\mathbb A_F)^{\Br(X_2)}].$$
In particular, if $X_F$ is finite, then
$X_F$ satisfies the very strong approximation property with Brauer-Manin obstruction off any finite set containing $\Xi$.
\end{prop}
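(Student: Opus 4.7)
The plan is to prove both inclusions of the displayed equality and then derive the ``in particular'' consequence. For the easy direction $\supseteq$ and disjointness of the right-hand side, the isomorphism $\Br(X_F) = \Br(X_1) \oplus \Br(X_2)$ (from the disjoint union decomposition) ensures that each inclusion $X_i \hookrightarrow X_F$ sends $X_i(\A_F)^{\Br(X_i)}$ into $X_F(\A_F)^{\Br(X_F)}$. Disjointness of the two images in $X_F(\A_F^\Xi)$ follows from the schemewise disjointness $X_1(F_v) \cap X_2(F_v) = \emptyset$ at every $v$, together with $\Omega_F \setminus \Xi \neq \emptyset$ (a global field always has finite places).

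The heart of the argument is the reverse inclusion. Given $(x_v) \in X_F(\A_F)^{\Br(X_F)}$, set $\tilde\Omega_i := \{v : x_v \in X_i(F_v)\}$, so $\Omega_F = \tilde\Omega_1 \sqcup \tilde\Omega_2$. Testing the Brauer--Manin condition against the classes $(c|_{X_1}, 0) \in \Br(X_1) \oplus \Br(X_2) = \Br(X_F)$ for arbitrary $c \in \Br(F)$ yields
$$\sum_{v \in \tilde\Omega_1} \inv_v(c) = 0 \quad \text{for all } c \in \Br(F).$$
The main obstacle is to pass from this relation to the conclusion $\tilde\Omega_i \setminus \Xi \in \{\emptyset, \Omega_F \setminus \Xi\}$. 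Using the fundamental exact sequence $\Br(F) \hookrightarrow \bigoplus_v \Br(F_v) \to \Q/\Z$ of class field theory, if both $\tilde\Omega_1 \setminus \Xi$ and $\tilde\Omega_2 \setminus \Xi$ were non-empty, I would pick $v_0, v_1$ in each and build $c \in \Br(F)$ with invariants $1/2$ at $v_0, v_1$ and $0$ elsewhere; such $c$ exists because $1/2$ is realizable in $\inv_v(\Br(F_v))$ at every non-complex place and $1/2+1/2=0$ in $\Q/\Z$, contradicting the relation. Thus, after possibly relabeling, $\tilde\Omega_2 \subseteq \Xi$ and $x_v \in X_1(F_v)$ for every $v \notin \Xi$.

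To finish the reverse inclusion, I would lift $(x_v)_{v \notin \Xi}$ to $(y_v)_v \in X_1(\A_F)^{\Br(X_1)}$: choose any $y_v \in X_1(F_v)$ for $v \in \Xi$ (non-empty by the Nullstellensatz since $F_v = \mathbb C$ and $X_1$ is a non-empty $F$-scheme of finite type) and set $y_v = x_v$ otherwise. Adelicness is inherited via the scheme-theoretic closure of $X_1$ inside any integral model of $X_F$: at each $v \notin \Xi$ where $x_v$ extends to a local integral point, that section automatically factors through this closure. The Brauer--Manin condition for $(y_v)$ then holds because complex-place contributions vanish (trivial local Brauer group) while the remaining sum over $v \notin \Xi$ coincides with the Brauer--Manin condition already satisfied by $(x_v)$ against $(\xi_1, 0) \in \Br(X_F)$.

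For the ``in particular'' clause, write a finite $F$-scheme as $\coprod_i \Spec k_i$ with $k_i$ finite field extensions of $F$; induction on the number of components extends the displayed equality to this case, and further projecting to $\A_F^S$ for $S \supseteq \Xi$ yields $P_S[X_F(\A_F)^{\Br(X_F)}] = \bigcup_i P_S[\Spec k_i(\A_F)^{\Br(\Spec k_i)}]$. For $k_i = F$ the set is a singleton equal to $\Spec F(F)$; for $k_i \neq F$, Chebotarev supplies a prime of $F$ inert in $k_i$, so $\Spec k_i(\A_F) = \emptyset$. Hence $P_S[X_F(\A_F)^{\Br(X_F)}] = X_F(F)$, i.e., VSAP-BM off $S$ holds.
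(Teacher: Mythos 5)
Your main decomposition argument is essentially the paper's: test the Brauer--Manin pairing against a class built from $\Br(F)$ via the fundamental exact sequence of class field theory to show that, for an adelic point in the Brauer--Manin set, all components at places outside $\Xi$ land in the same piece $X_i$. Your variant (a single class $(c\vert_{X_1},0)$ with $c$ chosen to have invariant $1/2$ at two non-complex places, rather than the paper's pair $(\xi_1^*,\xi_2^*)$) is correct and a little cleaner, and the lift of $(x_v)_{v\notin\Xi}$ into $X_1(\A_F)^{\Br(X_1)}$ via the scheme-theoretic closure of $X_1$ in a model of $X_F$, together with the Nullstellensatz at complex places, is sound.

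Two gaps remain in the ``in particular'' clause. (i) A finite $F$-scheme is not in general $\coprod_i\Spec k_i$ with the $k_i$ fields: it may be non-reduced. The paper reduces to the reduced subscheme using topological invariance of the Brauer group (citing \cite{GB1}, Cor.\ 6.2); your proof needs the same step. (ii) The claim that Chebotarev supplies a prime of $F$ inert in $k_i$ whenever $k_i\ne F$ is false in general: $\Q(\sqrt2,\sqrt3)/\Q$ has no inert primes, since $(\Z/2)^2$ has no element of order $4$. What you actually need is a place $v$ with $\Hom_F(k_i,F_v)=\emptyset$, and (for $k_i/F$ separable) this follows from Chebotarev combined with Jordan's lemma: a finite group acting transitively on a set of size $\ge 2$ has an element with no fixed point, so there is a Frobenius conjugacy class disjoint from every conjugate of $\Gal(M/k_i)$ in $\Gal(M/F)$, where $M$ is the Galois closure of $k_i/F$. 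With these two repairs your argument matches the paper's proof.
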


\begin{proof} If there is  $$(x_v)_{v\in (\Omega_F \setminus \Xi)}\in
  P_{\Xi}[X_F(\mathbb A_F)^{\Br(X)}]$$
with $v_1, v_2\in \Omega_F \setminus \Xi$ such that $x_{v_1}\in X_1(F_{v_1})$ and $x_{v_2}\in X_2(F_{v_2})$, then one can choose $\xi_1\in \Br(F)$ and $\xi_2\in \Br(F)$ satisfying
$$ \begin{cases} \inv_{v}(\xi_1) \neq \inv_v (\xi_2) \ \ \ & \text{if $v=v_1$ or $v_2$} \\
\inv_v(\xi_1)=\inv_v(\xi_2)=0 \ \ \ & \text{otherwise} \end{cases}$$ by the Artin reciprocity law
$$ 0 \rightarrow \Br(F) \rightarrow \oplus_{v\in \Omega_F} \Br(F_v) \xrightarrow{\sum_{v} inv_v} \Q/\Z \rightarrow 0 $$ (see \cite{NSW2}, VIII, (8.1.17) ). 

Let $\xi_i^*$ be the images of $\xi_i$ under
the canonical map $\Br(F)\rightarrow \Br(X_i)$ for $i=1, 2$. Then
$$(\xi_1^*, \xi_2^*) \in \Br(X_1)\oplus \Br(X_2) \cong \Br(X)$$
and
$$0= \sum_{v\in (\Omega_F\setminus \Xi)}
\inv_v((\xi_1^*+\xi_2^*)(x_v))=\inv_{v_1}(\xi_1)+\inv_{v_2}(\xi_2)=\inv_{v_1}(\xi_1)-\inv_{v_1}(\xi_2)$$ by the Artin reciprocity law as above.  A contradiction is derived and the first part of the proposition is proved.

Now suppose that $X_F$ is finite. By the above result,
we can suppose $X_F$ is local. By \cite{GB1}, Cor. 6.2, we can further suppose that
$X_F$ is reduced, hence is the spectrum of some finite field extension
$L$ of $F$. By Chebotarev Density Theorem,
if $X_F(\A_F)\ne\emptyset$, then $L=F$, in which case $X_F(F)=X_F(\A_F)$
and the result follows.
\end{proof}

Note that a similar decomposition result can be found in \cite{St},
Proposition~5.1.1.
\medskip

Let $T_F$ be a torus over a global field $F$. Let
$\widehat{T}_{F}:=\Hom_{\bar F}(T_{\bar F},\G_{m,\bar{F}})$ be the
group of characters of $T_F$. A point $x\in T_F(\A_F)$
induces an evaluation homomorphism
$$ \widehat{T}_F \xrightarrow{x}
{\mathbb I}_{\bar F}:=\varinjlim_{E/F} {\mathbb I}_E $$ where $E/F$ runs
over all finite separable subextensions of $\bar{F}$, thus a group homomorphism
$$ H^2(F, \widehat{T}_F) \xrightarrow{x^*} H^2(F, {\mathbb I}_{\bar F}) = \bigoplus_{v\in \Omega_F} \Br(F_v) \xrightarrow{\sum_v \inv_v} \Q/\Z $$
(\cite{NSW2}, VIII, (8.1.7)). This gives a pairing
\begin{equation}
  \label{eq:BM-pairing}
  T_F(\A_F)\times H^2(F, \widehat{T}_F)  \longrightarrow \Q/\Z
\end{equation}
and induces a homomorphism
\begin{equation}
  \label{eq:pairing-map}
  T_F(\A_F) \longrightarrow \Hom (H^2(F,\widehat{T}_F), \Q/\Z).
\end{equation}
This pairing is compatible with the Brauer-Manin pairing. Indeed, one has the canonical inclusion
 $$\Br(F)\oplus H^2(F, \widehat{T}_F)=
H^2(F, \cO(T_{\bar{F}})^\times) \hookrightarrow \Br(T_F) $$ by Hochschild-Serre
 spectral sequence (see \cite{Milne80}, III, Theorem 2.20)
with $\Pic(T_{\bar F})=0$. This inclusion commutes with the evaluation
 $$
\begin{CD}
H^2(F, \widehat{T}_F)  @>>> \Br(T_F)  \\
@V{x^*}VV   @VV{x^*}V \\
\bigoplus_{v\in \Omega_F} \Br(F_v) @{=} \bigoplus_{v\in \Omega_F} \Br(F_v)\\
\end{CD}
$$
for any $x\in T_F(\A_F)$.
\medskip

For any topological group $G$, denote by $G^{\wedge}$ the completion of $G$ with respect to open finite index subgroups. 

\begin{prop}\label{tori} Let $T_F$ be a torus over a global function
  field $F$. Then we have canonical exact sequences 
\begin{equation} \label{eq:profi}
1 \rightarrow T_F(F)^{\wedge}  \to  T_F(\A_F)^{\wedge}\rightarrow H^2(F, \widehat{T}_F)^{\vee}
\end{equation} 
and
  \begin{equation}
    \label{eq:H2-fct}
    1\rightarrow T_F(F) \rightarrow T_F(\A_F) \rightarrow H^2(F, \widehat{T}_F)^{\vee}
  \end{equation}
where $^{\vee}$ is the Pontryagin dual. 
\end{prop}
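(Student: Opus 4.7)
The plan is to derive both sequences from Poitou--Tate duality for the torus $T_F$ over the global function field $F$. The function-field hypothesis is crucial: with no archimedean places, the restricted-product formulation of local cohomology in degree zero identifies naturally with the adelic point set $T_F(\A_F)$ and behaves well under profinite completion. I would begin by invoking the standard Poitou--Tate nine-term exact sequence for the torus $T_F$ and its character module $\widehat{T}_F$, whose low-degree part reads
\begin{equation*}
0 \to T_F(F)^{\wedge} \to T_F(\A_F)^{\wedge} \to H^2(F, \widehat{T}_F)^{\vee} \to H^1(F, T_F) \to \cdots ,
\end{equation*}
using $H^0(F,T_F) = T_F(F)$ and $\prod{}'_v H^0(F_v, T_F) = T_F(\A_F)$. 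Taking the first three terms gives \eqref{eq:profi} immediately, and the compatibility with the pairing \eqref{eq:BM-pairing} is then formal from the Hochschild--Serre identification recalled before the proposition.

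For \eqref{eq:H2-fct} I would derive the result from \eqref{eq:profi} by establishing two injectivity facts. First, $T_F(F) \hookrightarrow T_F(F)^{\wedge}$ is injective: embedding $T_F$ into a quasi-trivial torus $\prod_i \Res_{K_i/F}\G_m$ reduces this to the residual finiteness of $K^{\times}$ for a global function field $K$, which is classical. Second, $T_F(\A_F) \hookrightarrow T_F(\A_F)^{\wedge}$ is injective: each local factor $T_F(F_v)$ is an extension of a free abelian group of finite rank by a profinite group (by the structure of points of a torus over a non-archimedean local field), so it has trivial intersection of open finite-index subgroups, and the restricted adelic product inherits this property.

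Given these injections, the kernel of the composition $T_F(\A_F) \to T_F(\A_F)^{\wedge} \to H^2(F, \widehat{T}_F)^{\vee}$ is the intersection $T_F(\A_F) \cap T_F(F)^{\wedge}$ inside $T_F(\A_F)^{\wedge}$, and the remaining task is to show this intersection coincides with $T_F(F)$. This is the main obstacle of the argument and is essentially equivalent to Tate--Nakayama duality for tori over function fields: by choosing a flasque (or coflasque) resolution of $\widehat{T}_F$, the statement reduces to the case $T_F = \G_m$, where it is the injectivity of the Artin reciprocity map on $\A_F^{\times}/F^{\times}$ in classical function-field class field theory. Everything else in the argument is formal once Poitou--Tate, residual finiteness, and this reciprocity input are in hand.
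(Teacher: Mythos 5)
Your derivation of \eqref{eq:profi} from Poitou--Tate (equivalently, the generalized Cassels--Tate sequence of Gonz\'alez-Avil\'es--Tan, which is what the paper cites) is essentially the paper's argument, modulo which reference one leans on, and is fine.

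The gap is in the passage from \eqref{eq:profi} to \eqref{eq:H2-fct}. You correctly isolate the crucial point as showing that $T_F(\A_F)\cap T_F(F)^{\wedge}=T_F(F)$ inside $T_F(\A_F)^{\wedge}$, or equivalently that $T_F(\A_F)/T_F(F)\to T_F(\A_F)^{\wedge}/T_F(F)^{\wedge}$ is injective (your two preliminary injectivity facts, $T_F(F)\hookrightarrow T_F(F)^{\wedge}$ and $T_F(\A_F)\hookrightarrow T_F(\A_F)^{\wedge}$, are true but do not by themselves control this intersection, and you say as much). But the claim that this statement ``reduces to $T_F=\G_m$ by a flasque (or coflasque) resolution'' is where the argument breaks. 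Flasque/coflasque resolutions connect $T_F$ to a quasi-trivial torus through a short exact sequence $1\to T\to E\to P\to 1$ (or $1\to R\to Q\to T\to 1$), and while the $\G_m$ case is indeed the injectivity of the reciprocity map for a function field, propagating the intersection statement along such a sequence requires comparing $T(\A_F)\cap T(F)^{\wedge}$ with $E(\A_F)\cap E(F)^{\wedge}$ through maps whose cokernels involve $H^1$-terms of the auxiliary torus, and there is no obvious diagram chase that closes. You would need to make this precise, and it is not a formality.

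The paper avoids this entirely and uses a different mechanism: it shows $T_F(\A_F)/T_F(F)$ is a compactly generated, locally compact, totally disconnected abelian group (the finiteness of $T_F(\A_F)/T_F(F)\mathbf T(\A_{F,S})$, from Harder's reduction theory over function fields, plus compact generation of $T_F(F)\mathbf T(\A_{F,S})/T_F(F)$), so that it injects into its own profinite completion; and since $T_F(\A_F)\to T_F(\A_F)/T_F(F)$ is an open surjection, that completion is $T_F(\A_F)^{\wedge}/T_F(F)^{\wedge}$. Both ingredients give exactly the injectivity you need. To repair your proposal, you should either supply a genuine reduction argument through the resolution (keeping track of the $H^1$-contributions), or replace that step with the compact-generation argument as in the paper.
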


\begin{proof}   The first exact sequence follows from Theorem 3.11 in \cite{GT}  by taking $M=T$. 
Indeed, the hyper-cohomology is the same as the usual Galois cohomology in this case and $M^*=\widehat{T}[1]$. Since $H^3(F, \widehat{T})$ is torsion, one has $H^2(F, M^*)=H^3(F, \widehat{T})=0$ by VIII, (8.3.17) in \cite{NSW2}.    

 The second one is deduced from \eqref{eq:profi}
if we can prove that the canonical map 
$$T_F(\A_F)/T_F(F)\to T_F(\A_F)^{\wedge}/T_F(F)^{\wedge}$$ is injective.  Since $T_F(F)$ is discrete in 
$T_F(\A_F)$,  the quotient group $T_F(\A_F)/T_F(F)$ is locally compact and totally disconnected.  Let $S$ be a finite non-empty subset of $\Omega_F$ and ${\bf T}$ be a group scheme model of $T$ over $\go_S$. Then  
 $T_F(\A_F)/T_F(F){\bf T}(\A_{F,S})$ is finite (when $T_F$ is split this comes from the finiteness of $\Pic(\go_S)$,
the general case follows from \cite{Harder}, Lemma 2.2.3).  It is clear that $T_F(F){\bf T}(\A_{F,S})/T_F(F)$ is compactly generated. This implies that $T_F(\A_F)/T_F(F)$ is compactly generated. Hence  
$$T_F(\A_F)/T_F(F)\hookrightarrow (T_F(\A_F)/T_F(F))^{\wedge}$$ 
(see {\it e.g.} \cite{HS}, Appendix).  Since the natural projection  $T_F(\A_F) \rightarrow T_F(\A_F)/T_F(F)$ is open, one concludes 
$$(T_F(\A_F)/T_F(F))^{\wedge}=T_F(\A_F)^{\wedge}/T_F(F)^{\wedge} $$ by  Appendix in \cite{HS}. 
\end{proof}

\begin{thm}\label{stp}
Let $T_F$ be a torus over a global field $F$.
Suppose that one of the following conditions holds:
\begin{enumerate}
\item $F$ is a global function field;\footnote{In this case $\infty_F=\emptyset$ 
and $S$ can be empty, $P_{\emptyset}$ is then the identity map};
\item $F$ is a number field and
$$\rank_F(T_F)=\sum_{v\in \infty_F} \rank_{F_v}(T_{F_v})$$
where $\rank_F(T_F)$ and $\rank_{F_v}(T_{F_v})$ are the respective ranks of the free abelian groups $\Hom_F(T_F,\mathbb G_m)$ and  $\Hom_{F_v}(T_{F_v},\mathbb G_m)$.
\end{enumerate}
Then $T_F$ satisfies VSAP-BM off $S$ for any finite subset $S$ containing $\infty_F$. 
In Case (2), $T_F(F)$ is discrete, hence closed in $T_F(\A_\Q^{\infty_F})$. 
\end{thm}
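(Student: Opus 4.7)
\medskip
\noindent\textbf{Proof plan.}
I must show $T_F(F) = P_S[T_F(\A_F)^{\Br(T_F)}]$; $\supseteq$ is reciprocity, and it suffices to treat $S=\infty_F$, since a further projection maps $T_F(F) \subseteq T_F(\A_F^{\infty_F})$ to its diagonal copy $T_F(F)\subseteq T_F(\A_F^S)$. Because $T_F$ is a torus, $\Pic(T_{\bar F})=0$, so Hochschild-Serre provides a canonical inclusion $H^2(F,\widehat{T}_F)\hookrightarrow \Br(T_F)$ compatible with evaluation at adelic points. Hence any $(x_v)\in T_F(\A_F)^{\Br(T_F)}$ has trivial image in $H^2(F,\widehat{T}_F)^{\vee}$ under the pairing~\eqref{eq:BM-pairing}.

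For case (1), combining this image-vanishing with the exact sequence~\eqref{eq:H2-fct} of Proposition~\ref{tori} immediately yields $T_F(\A_F)^{\Br(T_F)}=T_F(F)$, which gives the result.

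For case (2), the plan has two steps. \emph{Step A.} Show that $T_F(F)$ is discrete, hence closed, in $T_F(\A_F^{\infty_F})$. By Ono's analogue of Dirichlet's unit theorem for tori, $\rank_{\Z}\T(\go_F)=\sum_{v\in\infty_F}\rank_{F_v}(T_{F_v})-\rank_F(T_F)=0$ by hypothesis for any smooth group-scheme model $\T$ of $T_F$ over $\go_F$, so $\T(\go_F)$ is a finite group; combined with the fundamental system of neighbourhoods of $1$ in $T_F(\A_F^{\infty_F})$ from the second part of Corollary~\ref{U-model} (via successive dilatations at the unit), this separates $1$ from the remaining points of $T_F(F)$. \emph{Step B.} Invoke the number-field analogue of Proposition~\ref{tori}: Poitou-Tate duality for tori over number fields yields an exact sequence $1 \to T_F(F)\cdot T_F(F_\infty)^{\circ} \to T_F(\A_F) \to H^2(F,\widehat{T}_F)^{\vee}$, where $T_F(F_\infty)^{\circ}=\prod_{v\in\infty_F} T_F(F_v)^{\circ}\subseteq T_F(\A_F)$ is the subgroup whose finite components are trivial and whose archimedean components lie in the identity component. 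The Brauer-Manin image-vanishing then forces $(x_v)\in T_F(F)\cdot T_F(F_\infty)^{\circ}$; the continuous projection $P_{\infty_F}$ kills the archimedean factor, placing $P_{\infty_F}(x_v)$ in $T_F(F)\subseteq T_F(\A_F^{\infty_F})$, as required.

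The main difficulty lies in Step~B, namely obtaining the precise number-field analogue of Proposition~\ref{tori} with error term exactly the archimedean identity component, via Poitou-Tate duality with careful tracking of the archimedean corrections and their compatibility with the Brauer-Manin pairing. Step~A is the point at which the rank hypothesis enters essentially: without the equality $\rank_F(T_F)=\sum_{v\in\infty_F}\rank_{F_v}(T_{F_v})$, the $\Z$-rank of $\T(\go_F)$ would be positive and $T_F(F)$ would fail to be discrete in $T_F(\A_F^{\infty_F})$, so the closedness argument that promotes density into equality would break down.
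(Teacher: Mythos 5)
Your plan is correct and matches the paper's proof essentially step for step: case (1) via Proposition~\ref{tori}; case (2) by establishing discreteness of $T_F(F)$ in $T_F(\A_F^{\infty_F})$ from the rank hypothesis (the paper cites Shyr's generalization of Dirichlet's unit theorem, interchangeable with your reference to Ono) and then invoking Harari's duality theorem, \cite{H}, Th\'eor\`eme~2, which is precisely the content you label ``the main difficulty'' of Step~B and which the paper cites rather than reproves. Note only that the exact sequence you write in Step~B with kernel $T_F(F)\cdot T_F(F_\infty)^{\circ}$ --- rather than the closure of that subgroup, which is what the duality theorem yields in general --- is valid only once Step~A has established discreteness, so the two steps are not logically independent as your exposition suggests.
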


\begin{proof} It is enough to show that, under the hypothesis of the
theorem, the kernel of the map \eqref{eq:pairing-map} is $T_F(F)$.

(1) The case where $F$ is a global function field follows from Proposition~\ref{tori}.

(2) Suppose $F$ is a number field. According to \cite{Shyr}, Main Theorem,
under the hypothesis on the ranks, there is an open subgroup of
$T(\mathbb A_F)$ whose intersection with
$T_F(F)$ is finite.
So $T_F(F)$ is discrete (hence closed) in
$T_F(\A_F^{\infty_F})$. The result follows from \cite{H}, Th\'eor\`eme 2 (see also Remarques that follow).
\end{proof}

Recall that a subvariety is a closed subvariety of an open subvariety. 

\begin{cor}\label{reb} Let $X_F$ be a subvariety of a torus as
in Theorem \ref{stp}. Then
$$X_F(F)= P_{\infty_F} (X_F(\A_F)^{\Br(X)})$$
where $\infty_F$ is empty if $F$ is a global function field.
In particular, for any finite $S\supseteq \infty_F$,
the very strong approximation property with Brauer-Manin
obstruction off $S$ holds for $X_F$.
\end{cor}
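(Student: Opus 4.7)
The plan is to deduce Corollary~\ref{reb} as an immediate combination of two results already proved in the paper: Theorem~\ref{stp} applied to the ambient torus, and the hereditary property of VSAP-BM under immersions recorded in Proposition~\ref{BM-VSAP-im}. Concretely, the hypothesis ``$X_F$ is a subvariety of a torus $T_F$ as in Theorem~\ref{stp}'' tells us that the torus $T_F$ falls under either case (1) (global function field) or case (2) (number field with matching ranks) of that theorem, so $T_F$ itself satisfies VSAP-BM off $S$ for every finite $S\supseteq\infty_F$, with $S$ allowed to be empty in the function field case.

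Next I would invoke the convention stated just before the corollary: a subvariety is, by definition, a closed subscheme of an open subscheme, hence the inclusion $X_F\hookrightarrow T_F$ is an immersion. Feeding this immersion into Proposition~\ref{BM-VSAP-im} transfers the VSAP-BM property off $S$ from $T_F$ down to $X_F$. Taking $S=\infty_F$ (vacuously $S=\emptyset$ in the function field case) yields the first displayed identity
\[
X_F(F)=P_{\infty_F}\bigl(X_F(\A_F)^{\Br(X_F)}\bigr),
\]
while leaving $S$ an arbitrary finite set containing $\infty_F$ yields the final clause about VSAP-BM off $S$.

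Since both ingredients are already in place, there is no real obstacle. The only point that deserves a line of verification is that the notion of ``subvariety'' used in the hypothesis (locally closed subscheme) does give an immersion in the sense of \cite{EGA}, I.4.2.1, which is precisely what Proposition~\ref{BM-VSAP-im} requires; this is tautological from the recalled definition. The proof will therefore amount to citing Theorem~\ref{stp} for $T_F$, noting that $X_F\hookrightarrow T_F$ is an immersion, and applying Proposition~\ref{BM-VSAP-im}.
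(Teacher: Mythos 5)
Your proposal matches the paper's proof exactly: cite Theorem~\ref{stp} to get VSAP-BM off $S$ for the ambient torus $T_F$ (including $S=\infty_F$), note that $X_F\hookrightarrow T_F$ is an immersion, and apply Proposition~\ref{BM-VSAP-im} to transfer the property to $X_F$. No issues.
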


\begin{proof} This follows from Proposition~\ref{BM-VSAP-im} and
Theorem~\ref{stp}.
\end{proof}

\begin{example}\label{rank0} The following tori satisfy the rank condition in
Theorem \ref{stp} (2):
\begin{enumerate}
\item  when $F=\Q$ or is a quadratic imaginary field and $T_F$ is a
split torus over $F$;
\item when $F$ is a totally real field, and
$T_F=\prod_{i=1}^d\Res_{L_i/F}^1(\G_{m, L_i})$ where the $L_i$ are totally
imaginary quadratic extensions of $F$ and $\Res_{L_i/F}$ denotes the Weil 
restriction functor;
\item when $T_F$ is isogeneous to any finite product of the above tori
  over $F$.
\end{enumerate}
\end{example}

\begin{example}\label{ex-3.9} Let $X_{\Q}$ be one of the algebraic varieties
defined as follows. Then VSAP-BM off $\infty_{\Q}$ holds for $X_{\Q}$.
\begin{enumerate}
\item Let $X_{\Q}$ be defined by the following diagonal equation
$$ \sum_{i=1}^d a_i x_i^{n_i}=a_0 \ \ \ \text{with} \ \ \
\prod_{i=1}^d x_i \neq 0 $$ where $a_i\in\Q$,  $n_i\in \N\setminus \{0\}$
for $1\leq i\leq d$.
Then $X_{\Q}$ is a closed subvariety of $\G_{m, {\Q}}^d$.
\item Let $E$ be an elliptic curve over $\Q$ and
$z_1, z_2, z_3\in E(\Q)[3]$ be three distinct points. Then
$$X_{\Q}=E\setminus \{z_1, z_2, z_3\}$$
can be embedded in $\G_{m,\Q}^2$ because it is contained in the
complementary in $\mathbf P^2_\Q$ of the tangent lines at the three missing
points.
\item Any open subset of $\mathbf P^1_\Q$ in the complementary of an
imaginary quadratic point is contained in a torus as in
Example~\ref{rank0}(2).
  \end{enumerate}
\end{example}

\begin{rem}\label{open} Suppose $X_F$ is a smooth and geometrically integral variety over $F$ and $U$ is an open subscheme of $X_F$. If $\Br(U)/\Br(F)$ is finite, then
the strong approximation property with Brauer-Manin obstruction of $U$
implies the strong approximation property with Brauer-Manin obstruction of $X_F$ by Proposition 2.6 in \cite{CTX11}. It is natural to ask if the finiteness condition for $\Br(U)/\Br(F)$ can be removed. \end{rem}

Take any $X_F$ for which the strong approximation property with Brauer-Manin
obstruction does not hold ({\it e.g.}, the complement of a rational point
in some elliptic curve as at the end of \cite{HaVo}), then
the next corollary (or Example \ref{ex-3.9}(2)) gives a negative answer to the above question,
by taking a small enough open subset $U$ of $X_F$.

\begin{cor} \label{rev} Let $X_F$ be an algebraic variety over $F$, where $F=\Q$ or
is an imaginary quadratic field. Then there exists a dense open subset $U$ of
$X_F$ such that $U$ satisfies the very strong approximation property with
Brauer-Manin obstruction off $S$ for any finite subset $S$ containing $\infty_F$.  
\end{cor}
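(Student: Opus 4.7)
The plan is to exhibit a dense open subvariety $U$ of $X_F$ that is a closed subvariety of a split torus $\G_{m,F}^N$ for some $N$; then Corollary~\ref{reb}, whose hypotheses are met by Example~\ref{rank0}(1) (split tori over $\Q$ or over an imaginary quadratic field satisfy the rank condition of Theorem~\ref{stp}(2)), immediately gives VSAP-BM off $S$ for $U$ for every finite $S \supseteq \infty_F$.

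First, I would reduce to the case where $X_F$ is integral and affine. Replacing $X_F$ by its reduced scheme affects neither $X_F(F)$, nor $X_F(\A_F)$, nor $\Br(X_F)$, so assume $X_F$ is reduced with irreducible components $X_1,\dots,X_k$; each $V_i := X_F \setminus \bigcup_{j\ne i} X_j$ is then open in $X_F$, contained in $X_i$, and dense in $X_i$. After shrinking each $V_i$ to a dense open affine subvariety, one obtains a dense open subvariety $V = \bigsqcup_i V_i$ of $X_F$ that is a finite disjoint union of integral affine varieties. If each $V_i$ contains a dense open subset $U_i$ satisfying VSAP-BM off $S$, then $U := \bigsqcup_i U_i$ does as well: the decomposition proposition stated earlier (for $P_\Xi[X_F(\A_F)^{\Br(X_F)}]$ when $X_F$ is a disjoint union of open subschemes), extended inductively, gives $P_\Xi[U(\A_F)^{\Br(U)}] = \bigsqcup_i P_\Xi[U_i(\A_F)^{\Br(U_i)}]$; since $S \supseteq \infty_F \supseteq \Xi$, the map $P_S$ factors through $P_\Xi$, so VSAP-BM on each $U_i$ assembles into VSAP-BM on $U$.

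Thus one may assume $X_F = \Spec A$ is integral affine. Choose a finite set of nonzero generators $a_1,\dots,a_N$ of $A$ as an $F$-algebra; they induce a closed embedding $X_F \hookrightarrow \A_F^N$ under which $X_F$ is contained in no coordinate hyperplane $\{x_i = 0\}$. Hence the open subset $U := X_F \setminus \bigcup_i \{x_i = 0\}$ is dense in $X_F$ and is a closed subvariety of the split torus $\G_{m,F}^N \subseteq \A_F^N$, and Corollary~\ref{reb} applied to this $U$ finishes the argument. There is no real difficulty here; the substantive content is in Corollary~\ref{reb}, and the only point demanding attention is the bookkeeping in the disjoint union reduction, where the decomposition available for $P_\Xi$ must be transferred to $P_S$ via the factorization of $P_S$ through $P_\Xi$.
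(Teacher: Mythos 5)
Your proof is correct and follows essentially the same strategy as the paper: reduce to the integral affine case (via the disjoint union decomposition for $P_\Xi$), then exhibit a dense open $U$ as a closed subvariety of a split torus $\G_{m,F}^N$, and invoke Corollary~\ref{reb} via Example~\ref{rank0}(1). The only (harmless) variation is how the embedding is produced: the paper takes a minimal-dimensional closed embedding $X_F\hookrightarrow\A^n_F$ and intersects with the complement of $n$ independent linear hyperplanes, whereas you embed by a set of nonzero algebra generators and intersect with $\G_{m,F}^N\subset\A^N_F$ directly; both yield a nonempty dense open closed in the torus for the same reason (irreducibility of $X_F$ plus the fact that $X_F$ is not contained in any of the removed hyperplanes).
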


\begin{proof} By Corollary~\ref{reb} and Example~\ref{rank0}(1), it is enough to show that $X_F$ contains a dense open subset $U$
which is a finite disjoint union of subvarieties of split tori over $F$. After restricting to the interior of 
each irreducible component, we can suppose that $X_F$ is irreducible and
even integral. Shrinking further $X_F$ if necessary, we can suppose that $X_F$ is affine. 
Now embed $X_F$ as a closed subvariety
in some affine space $\mathbb A^n_F$ with $n$ minimal. Let $T$ be a split torus obtained as the
complement of $n$ independent linear hyperplanes in $\mathbb A^n_F$. Then $U:=X_F\cap T$ is a closed subvariety of $T$,
is non-empty (by the minimality of $n$) and is open in $X_F$.
\end{proof}

\begin{example} Let $F$ be a number field.
Let $X_F$ be an algebraic variety admitting
a non-constant morphism $f : \mathbf A^1_F\to X_F$ from the affine
line over $F$. Then for any finite $S\supseteq \infty_F$, the map
$$X_F(F)\to P_S[X_F(\mathbb A_F)^{\Br(X_F)}]$$
is not surjective. This can be proved as follows.
The image of $f$ in $X_F$ is a locally closed
subset of dimension $1$. Endowed with the reduced structure, it
is an integral subvariety of $X_F$. By Proposition~\ref{BM-VSAP-im},
we can replace $X_F$ by $f(\mathbf A^1_F)$ and
suppose that $f$ is surjective.

We can extend $f$ to a finite surjective morphism
$\pi : \mathbf P^1_F\to \overline{X}_F$ to a compactification
$\overline{X}_F$ of $X_F$. Then $\pi^{-1}(X_F)=\mathbf A^1_F$.
By L\"uroth's Theorem,
the normalization of $X_F$ is isomorphic to $\mathbf A^1_F$. Thus we
can replace $f$ by the normalization map of $X_F$ and
suppose that $f$ is birational.
Let $V$ be the smooth locus of $X_F$ and let $U=f^{-1}(V)$.
The canonical injective map
$$P_S[U(\A_F)]\cong P_S[V(\A_F)]\to P_S[X_F(\A_F)]$$ factors
through
$$P_S[U(\A_F)]\subseteq P_S[\mathbf
A^1_F(\A_F)]=P_S[\mathbf A^1_F(\A_F)^{\Br(\mathbf A^1_F)}]\to
P_S[X_F(\A_F)^{\Br(X_F)}].$$
So $P_S[U(\A_F)]$ is actually contained in
$P_S[X_F(\A_F)^{\Br(X_F)}]$. If the map $$X_F(F)\to P_S[X_F(\A_F)^{\Br(X_F)}]$$
is surjective, as $X_F(F)$ is countable,
$P_S[U(\A_F)]$ would also be countable, we have a contradiction.
\end{example}

\section{Harari-Voloch's conjecture}\label{hv}

Let $X_F$ be a hyperbolic rational curve ({\it i.e.} $X_F$ is an open subset
in $\mathbf P^1_F$, and its complementary is a reduced separable
divisor of degree $\ge 3$) and let $\X$ be an integral
model of $X_F$ over $\go_S$ for a non-empty finite subset $S$ of $\Omega_F$
containing $\infty_F$. Harari and Voloch proposed the following conjecture
(\cite{HaVo}, Conjecture 2).

\begin{conjecture}{(Harari-Voloch)} \label{conjecture}
Let $$B_S(X_F):= \Ker[\Br(X_F) \rightarrow \prod_{v\in S}
\Br(X_{F_v})/\Br(F_v)].$$
Then the diagonal map
$$ \X (\go_S) \rightarrow (\prod_{v\not\in S}\X({\go}_v))^{B_S(X_F)} $$
is bijective.
\end{conjecture}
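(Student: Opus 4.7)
The plan is to deduce the conjecture in the function-field case from Corollary~\ref{reb}, using \cite{Sun}, Theorem~1 to bridge the gap between the full Brauer group $\Br(X_F)$ and its subgroup $B_S(X_F)$.

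First I would exhibit a closed immersion $j : X_F \hookrightarrow T_F$ into a torus. Writing $\P^1_F \setminus X_F = \Spec E$ for a finite \'etale $F$-algebra $E$ of rank $d \geq 3$, the invertible rational functions of the form $t-\alpha$ (one for each point of $\Spec E$, for a coordinate $t$ with pole disjoint from $\Spec E$) assemble, modulo the diagonal $\G_m$ of scalars, into a closed immersion
$$ j : X_F \hookrightarrow T_F := \Res_{E/F}(\G_m)/\G_m, $$
a torus of dimension $d-1 \geq 2$. Since $F$ is a global function field, Corollary~\ref{reb} applies to $X_F$ and yields the very strong approximation with Brauer-Manin obstruction off $S$:
$$ X_F(F) = P_S[X_F(\A_F)^{\Br(X_F)}] \quad \text{for every finite } S \subseteq \Omega_F. $$

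Next, given $(x_v)_{v\notin S} \in (\prod_{v\notin S}\X(\go_v))^{B_S(X_F)}$, I would construct a complementary collection $(y_v)_{v\in S} \in \prod_{v\in S}X_F(F_v)$ such that the combined adelic point $(y_v, x_v)$ lies in $X_F(\A_F)^{\Br(X_F)}$. Once this is available, the VSAP-BM established in the first step produces $\sigma \in X_F(F)$ with $P_S(\sigma) = (x_v)_{v\notin S}$; then $\sigma \in X_F(F)\cap \X(\A_{F,S})$, so Lemma~\ref{section-lg} upgrades $\sigma$ to an integral point in $\X(\go_S)$, proving surjectivity of the diagonal map. Injectivity is trivial since $X_F(F)\hookrightarrow X_F(F_v)$ for any single $v\notin S$.

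The main obstacle, and the reason \cite{Sun} is indispensable, is the construction of $(y_v)_{v\in S}$. The $B_S(X_F)$-orthogonality of $(x_v)_{v\notin S}$ forces the assignment
$$ \xi \longmapsto -\sum_{v\notin S}\inv_v(\xi(x_v)) $$
to descend to a homomorphism on the quotient $\Br(X_F)/B_S(X_F)$, and this quotient injects naturally into $\prod_{v\in S}\Br(X_{F_v})/\Br(F_v)$ by the very definition of $B_S(X_F)$. The required $(y_v)_{v\in S}$ must realize the prescribed functional via the local evaluation pairing at $S$. This is precisely what \cite{Sun}, Theorem~1 guarantees for rational hyperbolic curves over global function fields, namely that the image of $\prod_{v\in S}X_F(F_v)$ under the local evaluation map into $\Hom(\Br(X_F)/B_S(X_F),\Q/\Z)$ is large enough to hit every functional coming from an off-$S$ adelic point in $\prod_{v\notin S}\X(\go_v)$. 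I expect the bulk of the technical work to lie in this step, specifically in verifying that the hypotheses of Sun's theorem are met in the exact framework of the conjecture.
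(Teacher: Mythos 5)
Your proposal misidentifies both the role and the content of \cite{Sun}, Theorem~1, and as a consequence the central step is not merely unfinished but set up in a way that cannot work. Sun's theorem is a Mordell--Lang type statement: for a finitely generated subgroup $\Gamma$ of a semi-abelian variety over a global function field and a subvariety $X$, the intersection of the $S$-adelic closure $\overline{\Gamma}$ with $X(\A_F^S)$ equals $\Gamma\cap X(F)$. It says nothing about realizing a prescribed functional on $\Br(X_F)/B_S(X_F)$ by a tuple $(y_v)_{v\in S}\in\prod_{v\in S}X_F(F_v)$. In fact, the lifting you propose, from $(\prod_{v\notin S}\X(\go_v))^{B_S(X_F)}$ to a full adelic point in $X_F(\A_F)^{\Br(X_F)}$, is essentially equivalent to the conjecture itself once VSAP-BM is known (compare Remark~\ref{HV-equiv}), so trying to produce it as an intermediate step is circular unless you import exactly the kind of information that the paper gets elsewhere.

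The paper's actual route does not pass through Corollary~\ref{reb} at all. It uses Proposition~\ref{eqhv}(1), which upgrades Proposition~\ref{tori} (the Poitou--Tate type sequence for tori over a function field) to the statement $\overline{T_F(F)}^S = T_F(\A_F^S)^{B_{1,S}(T_F)}$, where $B_{1,S}(T_F)$ is the relevant Tate--Shafarevich type kernel. Combined with the compatibility coming from Lemma~\ref{br}, this reduces the conjecture to the inclusion $\overline{T_F(F)}^S \cap X_F(\A_F^S) \subseteq X_F(F)$. One then replaces $T_F(F)$ by the finitely generated group of $S_1$-integral points $\T(\go_{S_1})$ for a suitable model $\T$, and applies Sun's theorem directly to conclude. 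Your embedding into the torus $\Res_{E/F}(\G_m)/\G_m$ is fine (it agrees with Lemma~\ref{embed-T}), and you correctly sense that Sun's theorem must carry the arithmetic weight, but the machinery around it — Proposition~\ref{tori}, the diagram in Proposition~\ref{eqhv}, and the identification of $B_S(X_F)$ with $B_{1,S}(T_F)$ via Lemma~\ref{br} — is what makes the $B_S$-condition (rather than the full Brauer--Manin condition) tractable, and none of it appears in your outline.
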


\begin{rem} \label{HV-equiv}
By the same argument as those in Corollary \ref{bijective}, this conjecture
holds for all models of $X_F$ over $\Spec(\go_S)$ if and only if 
$$ X_F(F)\to X_F({\mathbb A}_F^S)^{B_S(X_F)}$$
is bijective. Since
$$X_F({\mathbb A}_F^S)^{B_S(X_F)}\supseteq P_S
[(X_F(\A_{F}))^{\Br(X_F)}]\supseteq X_F(F), $$
Conjecture~\ref{conjecture} implies the very strong approximation
property with Brauer-Manin obstruction off $S$ for $X_F$, and it
also implies a conjecture of Skolem (see Remark 2.5 in \cite{HaVo}).
\end{rem}

\begin{lem}\label{br} Let $F$ be any field.
Let $X_F$ be an affine curve contained in
${\mathbf P}^1_F$ with $X_F(F)\ne\emptyset$.
Let $\bar{F}$ be a separable closure of $F$.
\begin{enumerate}
\item There is a split exact sequence
$$0\to \Br(F)\to \Br(X_F) \to H^2(F,
\cO(X_{\bar{F}})^\times/\bar{F}^\times)
\to 0.$$
\item Suppose $F$ is a global field. Denote by
$\bar{F}_v$ a separable closure of $F_v$ for any $v\in S$.
Then one has a split exact sequence
\begin{multline}
0\to \Br(F)\to B_S(X_F)
\to \\
\Ker\left[H^2(F,\cO(X_{\bar{F}})^\times/\bar{F}^\times)\xrightarrow{\res}
  \prod_{v\in S}
  H^2(F_v,\cO(X_{\bar{F}_v})^\times/\bar{F}_v^\times)\right] \to 0
\end{multline}
for any finite subset $S$ of $\Omega_F$ containing $\infty_F$.
\end{enumerate}
\end{lem}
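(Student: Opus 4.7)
The plan is to deduce both parts from the Hochschild-Serre spectral sequence for $\mathbb G_m$ on $X_F$, combined with a splitting induced by a chosen rational point $x_0 \in X_F(F)$. Let $G = \Gal(\bar F/F)$. Since $X_F$ is affine, $X_{\bar F}$ is a non-empty proper open subscheme of $\mathbf P^1_{\bar F}$, so the localization sequence for Picard groups forces $\Pic(X_{\bar F}) = 0$. As $X_{\bar F}$ is a regular curve, one has $\Br(X_{\bar F}) \hookrightarrow \Br(\bar F(X_{\bar F}))$, and the latter vanishes by Tsen's theorem. Hence the Hochschild-Serre spectral sequence
$$E_2^{p,q} = H^p(G, H^q_{\text{\'et}}(X_{\bar F}, \mathbb G_m)) \Rightarrow H^{p+q}_{\text{\'et}}(X_F, \mathbb G_m)$$
collapses onto its bottom row, yielding $\Br(X_F) \cong H^2(F, \cO(X_{\bar F})^\times)$.

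For part (1), the point $x_0$ determines a $G$-equivariant evaluation map $\cO(X_{\bar F})^\times \to \bar F^\times$ that retracts the inclusion $\bar F^\times \hookrightarrow \cO(X_{\bar F})^\times$. The short exact sequence of $G$-modules
$$0 \to \bar F^\times \to \cO(X_{\bar F})^\times \to \cO(X_{\bar F})^\times/\bar F^\times \to 0$$
therefore splits, and applying $H^2(F,-)$ combined with the previous isomorphism gives the split exact sequence claimed. The inclusion $\Br(F) \hookrightarrow \Br(X_F)$ is pullback along the structure morphism, and the retraction $\Br(X_F) \to \Br(F)$ is pullback along $x_0$.

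For part (2), apply part (1) both over $F$ and over each $F_v$ with $v \in S$, using $x_0$ as an $F_v$-point of $X_{F_v}$ to get compatible splittings. In particular $\Br(X_{F_v})/\Br(F_v) \simeq H^2(F_v, \cO(X_{\bar F_v})^\times/\bar F_v^\times)$, and the defining map of $B_S(X_F)$ factors through
$$\Br(X_F) \to H^2(F, \cO(X_{\bar F})^\times/\bar F^\times) \xrightarrow{\res} \prod_{v \in S} H^2(F_v, \cO(X_{\bar F_v})^\times/\bar F_v^\times)$$
and kills the $\Br(F)$ summand coming from part (1). A direct diagram chase then shows that $B_S(X_F)$ is the direct sum of $\Br(F)$ and $\Ker(\res)$, which is exactly the split exact sequence asserted.

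The main bookkeeping hurdle is to verify that the splittings of part (1) over $F$ and over each $F_v$ assemble into a single splitting of the $B_S(X_F)$-sequence; but since the same point $x_0 \in X_F(F)$ induces, after base change, a compatible family of retractions $\Br(X_{F_v}) \to \Br(F_v)$ for every $v \in S$, this follows essentially formally from the functoriality of the Hochschild-Serre construction.
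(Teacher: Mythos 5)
Your proposal is correct and follows essentially the same route as the paper: both use the Hochschild--Serre spectral sequence, kill $\Pic(X_{\bar F})$ and $\Br(X_{\bar F})$ (the latter by Tsen), identify $\Br(X_F)$ with $H^2(F,\cO(X_{\bar F})^\times)$, and split via evaluation at the chosen rational point. The paper leaves part (2) as ``the lemma follows easily''; your diagram-chase with the compatible $F$- and $F_v$-splittings is exactly the intended argument.
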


\begin{proof} By Hochschild-Serre spectral sequence (see
\cite{Milne80}, III, Theorem 2.20 and Appendix B, p. 309)
$$H^p(G, H^q(X_{\bar{F}}, {\mathbb G}_m)) \Rightarrow
H^{p+q}(X_F, {\G}_m)$$
with $G=\Gal(\bar{F}/F)$, one has the long exact sequence
$$(\Pic(X_{\bar{F}}))^G \to H^2(F, \cO(X_{\bar{F}})^\times)
\to  \Ker[\Br(X_F)\to \Br(X_{\bar{F}})]\to H^1(F, \Pic(X_{\bar{F}})).$$
Since $X_F$ is an affine open subscheme of ${\mathbf P}^1_F$, one has
$\Pic(X_{\bar{F}})=0$. Moreover, $\Br(X_{\bar{F}})\subseteq \Br(\bar{F}(X_{\bar{F}}))=0$ by Tsen's Theorem. Therefore
$$\Br(X_F)\cong H^2(F, \cO(X_{\bar{F}})^\times). $$
The existence of a $F$-rational point on $X_F$ implies that the exact
sequence of $G$-modules
$$1\to \bar{F}^\times \to \cO(X_{\bar{F}})^\times \to
\cO(X_{\bar{F}})^\times /\bar{F}^\times \to 1$$
splits. This give a split exact sequence
$$0\to \Br(F)\to \Br(X_F)\to H^2(F,
\cO(X_{\bar{F}})^\times/\bar{F}^\times)
\to 0.$$
When $F$ is a global field, the same decomposition holds over $F_v$
for any $v\in S$, and the lemma follows easily.
\end{proof}

Let $F$ be any field and $X_F$ be an affine curve contained in ${\mathbf P}^1_F$ such that
the reduced divisor $D:=\mathbf P^1_F \setminus X_F$ is separable of degree
$\ge 2$ and $X_F(F)\ne\emptyset$. The next lemma is the same as part
of \cite{HaVo}, Lemma 2.1, but we do not assume $H^3(F, \G_m)=0$.

\begin{lem} \label{embed-T} Let $X_F$ be as above and let $\bar{F}$ be a separable
  closure of $F$ with Galois group $G$. Let $\bar{D}=D\times_F \bar{F}$ and denote by
$\Div_{\bar{D}}({\mathbf P}_{\bar{F}}^1)$ the group of divisors on ${\mathbf P}^1_{\bar{F}}$ supported in $\bar{D}$.
Let $T_F$ be a torus over $F$ such that its character group $\widehat{T}_F$
is isomorphic to $\Div_{\bar{D}}^0 ({\mathbf P}^1_{\bar{F}})$ as a $G$-module.
Then there exists a closed immersion
$$ j : X_F\to T_F$$
such that the induced group homomorphism
$$\cO(T_{\bar{F}})^\times/\bar{F}^\times \to
\cO(X_{\bar{F}})^\times/\bar{F}^\times $$
is an isomorphism.
\end{lem}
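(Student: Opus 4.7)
The plan is to exploit the natural bijection
$$ \mathrm{Mor}_F(X_F, T_F) \;\cong\; \Hom_G\bigl(\widehat T_F,\, \cO(X_{\bar F})^\times\bigr), $$
where $G = \Gal(\bar F / F)$, and to build a $G$-equivariant homomorphism $\widehat T_F \to \cO(X_{\bar F})^\times$ by splitting the short exact sequence of $G$-modules
$$ 1 \to \bar F^\times \to \cO(X_{\bar F})^\times \to \cO(X_{\bar F})^\times / \bar F^\times \to 1. $$

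First I would identify the quotient $\cO(X_{\bar F})^\times / \bar F^\times$ with $\Div^0_{\bar D}(\mathbf P^1_{\bar F})$ as a $G$-module via $f \mapsto \mathrm{div}(f)$. The divisor of a unit on the affine curve $X_{\bar F}$ is supported on $\bar D$ and has degree $0$ when viewed on $\mathbf P^1_{\bar F}$; its kernel is the group of constants $\bar F^\times$; and the map is surjective because every degree-zero divisor on $\mathbf P^1_{\bar F}$ is principal. Combined with the hypothesis on $\widehat T_F$, this yields an isomorphism $\alpha : \widehat T_F \xrightarrow{\sim} \cO(X_{\bar F})^\times / \bar F^\times$. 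Evaluation at the $F$-rational point $x_0$ provides a $G$-equivariant retraction $\cO(X_{\bar F})^\times \to \bar F^\times$, so the sequence above splits; composing the resulting section with $\alpha$ produces a $G$-equivariant map $\varphi : \widehat T_F \to \cO(X_{\bar F})^\times$, and hence a morphism $j : X_F \to T_F$ with $j(x_0) = 1_T$. By construction, $j^\ast$ induces $\alpha$ on units modulo constants, which is exactly the isomorphism required.

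It remains to verify that $j$ is a closed immersion, which may be checked after base change to $\bar F$ by faithful flatness. Choose affine coordinates on $\mathbf P^1_{\bar F}$ so that $\bar D = \{a_1, \dots, a_{d-1}, \infty\}$ with the $a_i$ distinct in $\bar F$, and let $t_0$ be the coordinate of $x_0$. The basis $\{P_i - \infty\}_{1 \le i \le d-1}$ of $\widehat T_F$ identifies $T_{\bar F}$ with $\G_{m,\bar F}^{d-1}$, and $j_{\bar F}$ becomes $t \mapsto \bigl((t - a_i)/(t_0 - a_i)\bigr)_{i=1}^{d-1}$. The induced ring homomorphism
$$ \bar F[y_1^{\pm 1}, \dots, y_{d-1}^{\pm 1}] \longrightarrow \cO(X_{\bar F}) = \bar F[t, (t-a_1)^{-1}, \dots, (t-a_{d-1})^{-1}] $$
is surjective because its image contains $t - a_1$, hence $t$, and each $(t - a_i)^{-1}$; the hypothesis $d \ge 2$ is used precisely here to ensure the existence of at least one finite $a_i$. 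Surjectivity descends from $j_{\bar F}^\ast$ to $j^\ast$, so $j$ is a closed immersion. The main point to watch throughout is the $G$-equivariance of the splitting, which is automatic from the $F$-rationality of $x_0$.
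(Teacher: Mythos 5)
Your proof follows the same strategy as the paper: split the exact sequence $1 \to \bar F^\times \to \cO(X_{\bar F})^\times \to \Div^0_{\bar D}(\mathbf P^1_{\bar F}) \to 0$ $G$-equivariantly via evaluation at the rational point $x_0$, obtain the corresponding morphism $j : X_F \to T_F$, and verify it is a closed immersion by checking surjectivity of the ring map after base change to $\bar F$. The only difference is cosmetic: the paper constructs the surjective $\bar F$-algebra homomorphism $\bar F[\Div^0_{\bar D}(\mathbf P^1_{\bar F})] \twoheadrightarrow \cO(X_{\bar F})$ directly and labels the surjectivity ``an easy explicit computation,'' whereas you phrase the construction through $\Hom_G(\widehat T_F, \cO(X_{\bar F})^\times)$ and actually carry out that computation in coordinates.
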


\begin{proof}  We have a
canonical exact sequence
$$ 1\to \bar{F}^\times \to \cO(X_{\bar{F}})^\times \to \Div_{\bar{D}}({\mathbf P}^1_{\bar{F}})$$
where the last map consists in taking the divisors of rational
functions on $X_{\bar{F}}$. Because $\Pic^0(\mathbf P^1_{\bar{F}})=1$, it is clear
that the above exact sequence induces an exact sequence of $G$-modules
$$ 1\to \bar{F}^\times \to \cO(X_{\bar{F}})^\times \to \Div_{\bar{D}}^0 ({\mathbf P}_{\bar{F}}^1)\to 0.$$
A rational point on $X_F$ gives rise to an evaluation homomorphism
$e: \cO(X_{\bar{F}})^\times \to \bar{F}^\times$ of $G$-modules,
which is identity on $\bar{F}^\times$ on the source. Thus we have
$G$-equivariant isomorphisms
$$ \Div_{\bar{D}}^0 ({\mathbf P}^1_{\bar{F}})\cong \cO(X_{\bar{F}})^\times/\bar{F}^\times \cong
e^{-1}(\{1\}) \subset \cO(X_{\bar{F}})^\times,$$
and a $G$-equivariant $\bar{F}$-algebras homomorphism
$$ \bar{F}[\Div_{\bar{D}}^0 ({\mathbf P}^1_{\bar{F}})]\twoheadrightarrow
\bar{F}[e^{-1}(\{1\})]=\bar{F}[\cO(X_{\bar{F}})^\times]\subseteq \cO(X_{\bar{F}}).$$
An easy explicit computation (using the coordinates of the points of
$\bar{D}$; the hypothesis $\deg D\ge 2$ is needed here) shows that the last two terms are actually equal. Then
the above homomorphism induces a closed immersion $X_F\to T_F$
(depending on the choice of a rational point on $X_F$), and an
isomorphism
$$\cO(T_{\bar{F}})^\times/\bar{F}^\times =
\Div_{\bar{D}}^0 ({\mathbf P}^1_{\bar{F}})\to
\cO(X_{\bar{F}})^\times/\bar{F}^\times.$$ \end{proof}

\begin{definition}\label{B1S}
Let $T_F$ be a torus over $F$ and let
$\widehat{T}_{F}$ be the group of characters
of $T_{F}$. When $F$ is a global field, we set
$$B_{1,S}(T_F)=\Ker(H^2(F, \widehat{T}_{{F}})\to
\prod_{v\in S} H^2(F_v,\widehat{T}_{{F}})).$$
For any field $F$, we have
$\widehat{T}_{{F}}\cong \cO(T_{\bar{F}})^\times / \bar{F}^\times$
as Galois modules. Similarly to the proof of Lemma~\ref{br}(1), we
have a split exact sequence
$$0\to \Br(F) \to \Br_1(T_F):=\Ker(\Br(T_F)\to \Br(T_{\bar{F}}))\to H^2(F,
\widehat{T}_{{F}})
\to 0.$$
\end{definition}

The following proposition is a variant of Theorem 1 in \cite{HaVo}.

\begin{prop} \label{eqhv} Let $X_F$ be a hyperbolic rational curve
 over a global field $F$ and let
$j : X_F\to T_F$ be the closed immersion given by Lemma
\ref{embed-T}. Fix a non-empty finite set $S$ of primes containing $\infty_F$.
Then
\begin{enumerate}
\item we have
$$\overline{T_F(F)}^S =T_F(\A_F^S)^{B_{1,S}(T_F)}$$
where $\overline{T_F(F)}^S$ is the topological closure of $T_F(F)$ inside
$T_F(\A_F^S)$;
\item Conjecture \ref{conjecture} for all models $\X$ of $X_F$ over $\go_S$ is
true if and only if
$$ \overline{T_F(F)}^S \cap X_F(\A_F^S)=X_F(F).$$
\end{enumerate}
\end{prop}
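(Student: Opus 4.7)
The plan is to prove part~(1) first and then deduce part~(2) as a bookkeeping consequence of part~(1) together with the identification of the relevant subgroups of the Brauer groups. For part~(1), the inclusion $\overline{T_F(F)}^S \subseteq T_F(\A_F^S)^{B_{1,S}(T_F)}$ is straightforward: if $\xi \in B_{1,S}(T_F)$, then its local components $\xi_v \in H^2(F_v, \widehat{T}_F)$ are zero for every $v \in S$, so the pairing \eqref{eq:BM-pairing} with $\xi$ descends to a continuous map $T_F(\A_F^S) \to \Q/\Z$; Artin reciprocity (via the compatibility of \eqref{eq:BM-pairing} with the Brauer--Manin pairing on $\Br_1(T_F)$) forces this map to vanish on $T_F(F)$, and by continuity on its closure. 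For the reverse inclusion, given $y \in T_F(\A_F^S)^{B_{1,S}(T_F)}$, I would pick an arbitrary lift $\tilde y \in T_F(\A_F)$ and consider the induced $\ell_{\tilde y} \in H^2(F, \widehat{T}_F)^{\vee}$, $\xi \mapsto \sum_v \inv_v(\xi_v(\tilde y_v))$. By hypothesis $\ell_{\tilde y}$ vanishes on $B_{1,S}(T_F)$, so it factors through the injection $H^2(F, \widehat{T}_F)/B_{1,S}(T_F) \hookrightarrow \prod_{v \in S} H^2(F_v, \widehat{T}_F)$. Local Tate duality for tori at each $v \in S$ then allows one to choose $z = (z_v)_{v \in S} \in \prod_{v \in S} T_F(F_v)$ whose local Brauer--Manin contribution cancels $\ell_{\tilde y}$. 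Replacing $\tilde y$ by $\tilde y z$ does not change its projection $y = P_S(\tilde y)$, and the modified lift pairs trivially with all of $H^2(F, \widehat{T}_F)$; Proposition~\ref{tori} (function field case) or \cite{H}, Th\'eor\`eme 2 (number field case) then places its image in $T_F(\A_F)^{\wedge}$ inside $T_F(F)^{\wedge}$, so projecting to $T_F(\A_F^S)$ yields $y \in \overline{T_F(F)}^S$.

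For part~(2), I would combine part~(1) with the identification of Brauer groups supplied by Lemmas~\ref{br} and~\ref{embed-T}. By Lemma~\ref{embed-T}, the pullback $j^\ast : H^2(F, \widehat{T}_F) \to H^2(F, \cO(X_{\bar F})^\times/\bar F^\times)$ is a $G$-equivariant isomorphism; combined with Lemma~\ref{br}(2) and Definition~\ref{B1S}, this identifies $B_{1,S}(T_F)$ with the non-constant summand of $B_S(X_F)$ and makes the split exact sequences match. For $x \in X_F(\A_F^S)$, viewed in $T_F(\A_F^S)$ via $j$, any $\xi \in B_S(X_F)$ writes as $\xi_0 + \eta$ with $\xi_0 \in \Br(F)$ and $\eta \in B_{1,S}(T_F)$; the pairing of $x$ against $\xi_0$ vanishes by Artin reciprocity (constant classes contribute nothing), and that against $\eta$ agrees with the pairing of $j(x)$ against $\eta$ in $T_F$. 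Hence $X_F(\A_F^S)^{B_S(X_F)} = X_F(\A_F^S) \cap T_F(\A_F^S)^{B_{1,S}(T_F)}$, which by part~(1) equals $X_F(\A_F^S) \cap \overline{T_F(F)}^S$. Remark~\ref{HV-equiv} translates Conjecture~\ref{conjecture} (for all models $\X$ of $X_F$ over $\go_S$) into the equality $X_F(F) = X_F(\A_F^S)^{B_S(X_F)}$, and the desired equivalence follows.

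The hardest part will be the reverse inclusion in~(1). It requires an interplay between local Tate duality for tori at primes of $S$ and the global Poitou--Tate/Harari duality encoded in Proposition~\ref{tori} and \cite{H}, Th\'eor\`eme 2. The profinite completions appearing in \eqref{eq:profi} and the behavior at Archimedean primes in the number field case (mitigated here by $S \supseteq \infty_F$) demand careful bookkeeping in order to produce the adjustment $z$ in a controlled manner. Part~(2), by contrast, is essentially a formal consequence of part~(1) once the identification of the two Brauer subgroups is in hand.
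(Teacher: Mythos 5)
Your plan for part~(2) and the easy inclusion of part~(1) are correct and follow the same lines as the paper (use the split exact sequence \eqref{eq:BS}, functoriality of the Brauer--Manin pairing via $j$, and Remark~\ref{HV-equiv}). The problems are in the reverse inclusion of part~(1).

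The decisive step in your sketch --- ``Local Tate duality for tori at each $v\in S$ then allows one to choose $z=(z_v)_{v\in S}\in\prod_{v\in S}T_F(F_v)$ whose local Brauer--Manin contribution cancels $\ell_{\tilde y}$'' --- does not go through as stated. Local Tate duality for tori gives an isomorphism $T_F(F_v)^{\wedge}\cong H^2(F_v,\widehat{T}_F)^{\vee}$ (respectively $\pi_0(T_F(F_v))\cong H^2(F_v,\widehat{T}_F)^{\vee}$ at archimedean $v$), so the natural map $\prod_{v\in S}T_F(F_v)\to\prod_{v\in S}H^2(F_v,\widehat{T}_F)^{\vee}$ has \emph{dense} image but is not surjective. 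You can only approximate $\ell_{\tilde y}$, not hit it exactly, so your modified lift $\tilde y z$ will in general not pair trivially with \emph{all} of $H^2(F,\widehat{T}_F)$, and the appeal to Proposition~\ref{tori} or to \cite{H} collapses. This is precisely the point where the paper's proof differs: it does not try to produce an exact corrector $z$, but rather argues topologically. Concretely, the paper sets up a commutative diagram of three exact rows, records that the map $\phi$ (coming from local duality at $S$) has dense image, reduces the claim to showing that $T_F(F)\cdot T_S$ is dense in the preimage $P_{0,S}^{-1}(T_0^S)$, and then proves this density by showing (in the number field case) that $\rho$ is an open map onto its compact image and intersecting open sets with $\phi(T_S)$, or (in the function field case) by working with open subgroups of finite index and passing to profinite completions. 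Without some such argument you never recover exactness from mere density.

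A secondary issue: even granting exact cancellation, you identify the kernel of the global pairing map with $T_F(F)^{\wedge}$ and then say ``projecting to $T_F(\A_F^S)$ yields $y\in\overline{T_F(F)}^S$.'' The object appearing as the kernel in \cite{H}, Th\'eor\`eme 2 is the closure $\overline{T_F(F)}$ inside $\bigl(\prod_{v\in\infty_F}\pi_0(T_F(F_v))\bigr)\times T_F(\A_F^{\infty_F})$, not the abstract profinite completion $T_F(F)^{\wedge}$. The two are related, but conflating them skips exactly the kind of completion bookkeeping you flag at the end, and this bookkeeping is where the actual content lies (the paper devotes the bulk of the proof to it). In short: the ingredients you list are the right ones, but the exact-cancellation strategy has to be replaced by a density/openness argument to obtain a correct proof.
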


\begin{proof} By
Lemmas \ref{br} and \ref{embed-T}, we have a split exact sequence:
\begin{equation}
  \label{eq:BS}
   0\to \Br(F)\to B_S(X_F)\to B_{1,S}(T_F)\to 0.
\end{equation}
By Theorem 2 in \cite{H} {when $F$ is a number field, and
by Proposition~\ref{tori} when $F$ is a function field,}
together with the compatibility of
local duality (\cite{NSW2}, VII, (7.2.9)), one has the following diagram of exact sequences
$$
\begin{CD}
 @. (\prod_{v\in \infty_F} \pi_0(T_F(F_v))) \times \prod_{v\in S\setminus \infty_F} T_F(F_v)  @>\phi>> \prod_{v\in S} H^2(F_v, \widehat{T}_{F})^{\vee}  \\
 @.  @VVV  @VV{\res^{\vee}}V \\
 \overline{T_F(F)} @>>>  (\prod_{v\in \infty_F} \pi_0(T_F(F_v))) \times T_F(\A_F^{\infty_F}) @>{\rho}>>  H^2(F, \widehat{T}_{F})^{\vee} \\
 @. @V{P_{0,S}}VV @VVV \\
 @. T_F(\A_F^S) @>>> B_{1,S}(T_F)^{\vee}
\end{CD}$$
where $^{\vee}$ is the Pontryagin dual,
$\overline{T_F(F)}$ is the topological closure of $T_F(F)$ inside
$(\prod_{v\in \infty_F} \pi_0(T_F(F_v))) \times T_F(\A_F^{\infty_F})$,
$\pi_0(T_F(F_v))$ is the group of connected components of the
topological group $T_F(F_v)$ {and $P_{0,S}$ is the
  projection map}. Moreover, $\phi$ is injective with dense
image by \cite{NSW2}, VII, (7.2.10). 
Write for simplicity this diagram as
$$
\begin{CD}
 @. T_S  @>\phi>> H_S^{\vee}  \\
 @.  @VVV  @VV{\res^{\vee}}V \\
 \overline{T_F(F)} @>>>  G @>{\rho}>>  H^{\vee} \\
 @. @V{P_{0,S}}VV @VVV \\
 @. T_F(\A_F^S) @>>> B_S^{\vee}
\end{CD}$$
and denote $T^S_0=T_F(\A_F^S)^{B_{1,S}(T_F)}$. 
As the latter is closed in $T_F(\A_F^S)$, to prove (1), one only needs to show that $T_F(F)T_S$ is dense in 
$P_{0,S}^{-1}(T^S_0)$ or, equivalently, that $\overline{T_F(F)}T_S$ is dense in 
$P_{0,S}^{-1}(T^S_0)$.

When $F$ is a number field, the image
  of $\rho$ is the kernel of a continuous map of topological groups
$H^2(F, \widehat{T}_{F})^{\vee}\to \Sha^1(T_F)$ by \cite{H},
Theorem 2, hence compact. Because its domain is $\sigma$-compact,
$\rho$ is an open map to its image. 
Let $U\subseteq G$ be an open subset such that 
$$ U\cap P_{0,S}^{-1}(T^S_0) \neq \emptyset. $$ 
Then $\rho(U)=W\cap \rho(G)$ for some open subset $W$ of $H^{\vee}$. As 
$({\res^{\vee}})^{-1}(W)$ is a non-empty open subset of $H_S^{\vee} $ by the hypothesis on $U$,
it contains an element of $\phi(T_S)$ by the density of $\phi(T_S)$. 
This implies that $U$ contains an element of $(\Ker\rho)T_S=\overline{T_F(F)}T_S$ and
proves (1) for number fields. 

Now suppose that $F$ is a global function field. Then $$\overline{T_F(F)}=T_F(F) \ \ \ \text{ and} \ \ \ 
G=T_F(\A_F) . $$ Let $U$ be an open subset of $T_F(\A_F)$ with an intersection point $g\in U\cap P_{0,S}^{-1}(T^S_0)$. 
Let $C$ be the integral smooth projective curve whose function field is $F$. 
Then $g^{-1}.U$ contains an open subgroup of the form 
$\prod_v \T(\go_v)$, where $\T$ is a group scheme of finite type over $C$
with generic fiber isomorphic to $T_F$ (Corollary~\ref{U-model}). We can shrink $U$ and suppose $U=g\prod_v \T(\go_v)$. As $T_F(F)T_S$ is a group, 
$U\cap T_F(F)T_S\ne \emptyset$ is equivalent to $1\in gH$, where 
$$H:=(\prod_v \T(\go_v))T_F(F)T_S=T_F(F)\Big(\prod_{v\in S}T_F(F_v)\times \prod_{v\notin S} \T(\go_v)\Big)$$ 
is an open subgroup of $T_F(\A_F)$. 
We know that $T_F(\A_F)/H$ is finite (when $T_F$ is split this comes from the finiteness of $\Pic(\go_S)$,
the general case follows from \cite{Harder}, Lemma 2.2.3). 
Therefore, $H$ is an open subgroup of finite index in $T_F(\A_F)$ and $gH^{\wedge}$ is an open
subset in $T_F(\A_F)^{\wedge}$. 
Similarly to the case of number fields, by using the exact sequence \eqref{eq:profi}, we see that 
$gH^{\wedge}\cap T_F(F)^{\wedge}T_S\ne\emptyset$. As $T_F(F)$ and $T_S$ are contained in $H$, 
$T_F(F)^{\wedge}T_S\subseteq H^{\wedge}$, hence $g\in H^{\wedge}\cap T_F(\A_F)=H$ and $1\in gH$.

(2) By (1), we have 
$$\overline{T_F(F)}^S \cap X_F(\mathbb  A_F^S)= (T_F(\A_F^S))^{B_{1,S}(T_F)} \cap X_F(\mathbb  A_F^S)=X_F(\mathbb  A_F^S)^{B_S(X_F)}$$
by the functoriality of the Brauer-Manin pairing (see \cite{Sk}, (5.3))
and by the exact sequence \eqref{eq:BS}. This completes
the proof of (2) by Remark~\ref{HV-equiv}.
\end{proof}

\begin{cor}\label{true} Conjecture \ref{conjecture} is true if $F$ is a global function field.
\end{cor}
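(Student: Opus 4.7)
The strategy is to combine Proposition~\ref{eqhv}(2) with \cite{Sun}, Theorem~1. By Remark~\ref{HV-equiv}, Conjecture~\ref{conjecture} for $X_F$ is equivalent to the equality of sets
\[
X_F(F) = X_F(\A_F^S)^{B_S(X_F)},
\]
so I would aim to establish this equality directly.

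The first step is a reduction to the case $X_F(F)\neq\emptyset$, since Lemma~\ref{embed-T} (and hence Proposition~\ref{eqhv}(2)) requires the presence of an $F$-rational point on $X_F$. When $X_F(F)$ is empty, I would need to rule out the existence of any adelic Brauer--Manin point in $X_F(\A_F^S)^{B_S(X_F)}$. The natural approach is to base change to a finite separable extension $F'/F$ over which $X_F$ acquires a rational point (such an $F'$ exists: take the residue field of any closed point of $X_F$), and use the functoriality of the Brauer--Manin pairing under restriction together with the conjecture for $X_{F'}/F'$, which is then accessible by the main argument below.

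Assuming $X_F(F)\neq\emptyset$, Lemma~\ref{embed-T} produces a closed immersion $j : X_F \hookrightarrow T_F$ into a torus whose character group is $\Div_{\bar D}^0(\mathbf P^1_{\bar F})$ as a Galois module (the hyperbolicity hypothesis $\deg D \geq 3$ amply covers the $\deg D \geq 2$ needed by that lemma). Proposition~\ref{eqhv}(2) then translates Conjecture~\ref{conjecture} for $X_F$ into the torus intersection identity
\[
\overline{T_F(F)}^S \cap X_F(\A_F^S) = X_F(F)
\]
inside $T_F(\A_F^S)$, where $X_F$ is viewed as a closed subvariety of $T_F$ via $j$. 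This identity is exactly the content of \cite{Sun}, Theorem~1, applied to the closed subvariety $j(X_F)$ of the torus $T_F$ over the global function field $F$, and this finishes the proof.

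The main obstacle is not the invocation of Sun's theorem, which is an external input applied at the very end, but rather the preparatory reduction to $X_F(F)\neq\emptyset$: both Lemma~\ref{embed-T} and Proposition~\ref{eqhv}(2) rely on splitting an exact sequence of Galois modules through a rational base point, and an adelic point on $X_F$ does not directly yield an adelic point on $X_{F'}$, so the descent to handle $F$-point-free $X_F$ requires some care with the functoriality of Brauer--Manin under finite separable base change.
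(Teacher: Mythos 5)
Your overall framework matches the paper's: embed $X_F$ into a torus $T_F$ via Lemma~\ref{embed-T}, invoke Proposition~\ref{eqhv}(2) to reduce to the intersection identity $\overline{T_F(F)}^S \cap X_F(\A_F^S) = X_F(F)$, and finish with \cite{Sun}, Theorem~1. But there is a real gap in the last step: Sun's Theorem~1, as used in the paper, is a statement about a \emph{finitely generated} subgroup $G \subseteq T_F(F)$, namely that $\overline{G}^S \cap X_F(\A_F^S) = G \cap X_F(F)$ (this is the semi-abelian analogue of the Poonen--Voloch result for abelian varieties over function fields). The group $T_F(F)$ itself is typically not finitely generated---already $\G_m(F)=F^\times$ fails to be so for a global function field---so the identity $\overline{T_F(F)}^S \cap X_F(\A_F^S) = X_F(F)$ is \emph{not} ``exactly the content'' of Sun's theorem. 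The paper supplies the missing reduction: given $x\in \overline{T_F(F)}^S \cap X_F(\A_F^S)$, it chooses a finite $S_1\supseteq S$ and a group scheme model $\T$ over $\go_{S_1}$ such that $x$ lies in the open set $U = \prod_{v\in S_1\setminus S}T_F(F_v) \times \prod_{v\notin S_1}\T(\go_v)$. Since $U \cap T_F(F) = \T(\go_{S_1})$ and $x$ lies in the closure of $T_F(F)$, it follows that $x$ lies in the closure of $\T(\go_{S_1})$. Now $\T(\go_{S_1})$ \emph{is} finitely generated (the $S$-unit theorem for tori over function fields), and only then does Sun's Theorem~1 apply, giving $x\in \T(\go_{S_1})\cap X_F(F)\subseteq X_F(F)$. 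Without this passage from $T_F(F)$ to a finitely generated group of integral points, the appeal to Sun's theorem does not go through.

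On the preliminary point you emphasized, the reduction to $X_F(F)\neq\emptyset$: the paper's Conjecture~\ref{conjecture} is formulated in the context of Proposition~\ref{eqhv}, which already presupposes the closed immersion of Lemma~\ref{embed-T} and hence $X_F(F)\neq\emptyset$; the paper therefore does not carry out the descent you sketch. Your base-change strategy is not implausible, but if you want to include it you should make the norm/corestriction functoriality of Brauer--Manin and the descent of an $F'$-rational point (à la \cite{PV}, Lemma~3.2) explicit; as written it is only an outline. The principal defect of your proof, however, remains the direct application of Sun's theorem to the non-finitely-generated group $T_F(F)$.
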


\begin{proof} Embed $X_F$ into a torus $T_F$ as
in Lemma~\ref{embed-T}. By the above proposition, it is enough to show
that $\overline{T_F(F)}^S \cap X_F(\A_F^S)\subseteq X_F(F)$. For any $x\in
  \overline{T_F(F)}^S \cap X_F(\A_F^S)$, there is a finite set $S_1\supseteq S$ such that $T_F$ can be extended to
  a group scheme $\T$ of finite type over $\go_{S_1}$ with
  $$x\in \prod_{v\in S_1\setminus S}T_F(F_v) \times  \prod_{v\not\in S_1} {\T}(\go_v). $$
The latter is an open subset whose intersection with
$T_F(F)$ is equal to $\T(\go_{S_1})$.
This implies that $x\in \overline{{\T}(\go_{S_1})}$ where $\overline{{\T}(\go_{S_1})}$ is the topological closure of ${\T}(\go_{S_1})$ in $\prod_{v\not \in S} T_F(F_v)$ with the product topology. Since ${\T}(\go_{S_1})$ is a finitely generated abelian group, one has
$$x\in \overline{{\T}(\go_{S_1})} \cap X_F(\A_F^S)={\T}(\go_{S_1})\cap
X_F(F) \subseteq X_F(F)$$ by \cite{Sun}, Theorem 1. The proof is
complete. \end{proof}

\begin{example}\label{hv-ev} (See also Corollary~\ref{hv-ex2}) Let us give some evidence for Harari-Voloch conjecture over $\Q$ with $S=\{ \infty\}$.  
Let $X_\Q$ be a non-empty open subset of ${\mathbf P}^1_\Q \setminus D$, where 
$D$ consists of either three rational points, or two imaginary quadratic points, or 
one point of each type. Then Harari-Voloch conjecture is true for $X_\Q$ and $S=\{\infty\}$. 

Indeed, the torus $T_\Q$ which the curve $Y_\Q:={\mathbf P}^1_\Q\setminus D$ is embedded in following 
Lemma~\ref{embed-T} satisfies the condition of Theorem~\ref{stp} (2). In particular, 
$$\overline{T_\Q(\A_\Q)}^S=T_\Q(\Q). $$    It follows from Proposition~\ref{eqhv} (2) that Harari-Voloch
conjecture holds for $Y_\Q$ and $S$. For arbitrary non-empty open subset $X_\Q$ of $Y_\Q$, this
proof does not work anymore, but the result is still true by applying 
\cite{HaVo}, Theorem~3 to the open immersion $X_\Q\to Y_\Q$. 
\end{example}

\section{Adelic points of zero-dimensional subvarieties of tori}

In this section we will show that the analogue of \cite{St}, Theorem 3.11 
holds for tori over number fields. See Theorem~\ref{inters}. 
As an application, we generalize \cite{HaVo}, Theorem 3. See 
Proposition~\ref{quasi-finite}. 
Throughout this section, \emph{$F$ is a number field}. We also fix a finite subset $S$ 
of $\Omega_F$ containing $\infty_F$. 

The proof of the main theorem~\ref{inters} of this
section follows closely the same strategy as in \cite{St}. 
Let us first show some preliminary results on Galois cohomology of tori. Let $S_1$ be a finite subset of $\Omega_F$ with
$S_1\supseteq S$ and ${\T}$ be a separated commutative group scheme of finite type over $\go_{S_1}$ such that  
\begin{equation} \label{split}  {\bf T} \times_{\go_{S_1}} \bar{\go}_{S_1}  \cong \G_{m, \bar{\go}_{S_1}}^d \end{equation} 
for some $d\ge 1$, where  $\bar{\go}_{S_1}$ is the integral closure of $\go_{S_1}$ inside the algebraic closure $\bar{F}$ of $F$. Under the assumption (\ref{split}),  the multiplication by $N$ 
on ${\bf T}(\bar{\go}_{S_1})={(\bar{\go}_{S_1}^{\star})}^d$ is surjective. Note that 
enlarging $S_1$ if necessary, Condition~\eqref{split} is always be satisfied. Indeed, we can extend $F$ and suppose 
${\bf T}_{F}\simeq \G_{m, F}^d$. As ${\bf T}$ and $\G_{m, \go_{S_1}}^d$ coincide on the generic fiber
and are both of finite type, they concide on a dense open subset of $\Spec\go_{S_1}$.

For any positive integer $N$, denote by ${\T}[N]$ the group scheme of $N$-torsion of ${\T}$. In what follows, in Galois cohomology groups,
${\T}$ (resp. ${\T}[N]$) is the Galois module ${\T}(\bar{\go}_{S_1})$ (resp. ${\T}[N](\bar{\go}_{S_1})$). Define  as usual
$$ \Sel^{(N)}_S(F, {\T}):= \Ker(H^1(F, {\T}[N]) \rightarrow \prod_{v\not\in S}  H^1(F_v,{\T})) $$
and
$$\Sha_S({\T}):=\Ker(H^1(F, {\T}) \rightarrow \prod_{v\not\in S} H^1(F_v, {\T})).$$
(It can be seen that the image of
$H^1(F, {\T})$ in $\prod_{v\not\in S} H^1(F_v, {\T})$ is in fact
contained in $\oplus_{v\notin S} H^1(F_v, {\T})$ by Lang's Theorem on torsors under
connected algebraic groups over a finite field).
Then one has the short exact sequence
\begin{equation} \label{short} 0 \rightarrow {\T}(\go_{S_1})/N{\T}(\go_{S_1}) \rightarrow \Sel^{(N)}_S(F, {\T}) \rightarrow \Sha_S({\T})[N] \rightarrow 0 \end{equation}
and the natural coordinate map can be decomposed as
\begin{equation} \label{sel}
{\T}(\go_{S_1})/N{\T}(\go_{S_1}) \rightarrow
\Sel^{(N)}_S(F, {\T}) \rightarrow (\prod_{v\not\in S_1}  {\T}(\go_v)/N{\T}(\go_v))\times \prod_{v\in S_1\setminus S}  T_F(F_v)/NT_F(F_v)
\end{equation}
by using the Kummer sequence and Galois cohomology.
\begin{lem} \label{sha} The group $\Sha_S({\T})$ is finite.
\end{lem}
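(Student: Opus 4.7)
The plan is to reduce the finiteness of $\Sha_S(\T)$ to two classical finiteness results: the finiteness of the usual Tate-Shafarevich group $\Sha^1(F,T_F)$ of a torus over a number field, and the finiteness of the local Galois cohomology $H^1(F_v,T_F)$ at each place $v$. Since the notation $\Sha_S(\T)$ only differs from $\Sha^1(F,T_F)$ by allowing arbitrary classes at the finitely many places in $S$, one expects the difference between the two groups to be controlled by a finite product of local cohomology groups.

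First I would observe that for any class $\alpha\in\Sha_S(\T)$ the restrictions $\mathrm{res}_v(\alpha)\in H^1(F_v,T_F)$ at places $v\in S$ give a well-defined group homomorphism
\[
\rho\colon \Sha_S(\T)\longrightarrow \prod_{v\in S} H^1(F_v,T_F),
\]
and that, by the very definition of $\Sha^1$, the kernel of $\rho$ is exactly the classical Tate-Shafarevich group $\Sha^1(F,T_F)$. This gives the short exact sequence
\[
0\longrightarrow \Sha^1(F,T_F)\longrightarrow \Sha_S(\T)\longrightarrow \prod_{v\in S} H^1(F_v,T_F).
\]
It then suffices to prove that both the left-hand term and the right-hand target are finite.

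For the Tate-Shafarevich group of the torus, I would invoke Poitou-Tate duality together with the finiteness of $\Sha^2(F,\widehat{T}_F)$ (see, e.g., \cite{NSW2}, VIII, \S 8.6): the group $\Sha^1(F,T_F)$ is finite for any torus $T_F$ over a number field, using that the character module $\widehat{T}_F$ is finitely generated and becomes trivial as a Galois module after the finite extension over which $T_F$ splits (ensured in our setting by the hypothesis~\eqref{split}). For the local cohomology at a place $v\in S$, I would appeal to local Tate duality for tori: $H^1(F_v,T_F)$ is dual to a finitely generated module of the form $\widehat{T}_F^{G_v}$ modulo a divisible subgroup, and in any case is a finite group (for archimedean $v$ this is the standard computation of Galois cohomology of $\mathrm{Gal}(\mathbb{C}/\mathbb{R})$-modules; for non-archimedean $v$ it is Tate's local duality). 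Since $S$ is finite, the product $\prod_{v\in S} H^1(F_v,T_F)$ is a finite group, and hence $\Sha_S(\T)$ is an extension of a finite subgroup of this product by the finite group $\Sha^1(F,T_F)$, so it is finite.

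There is no real obstacle: the only point requiring care is that the "integral" group scheme $\T$ over $\go_{S_1}$ and its associated Galois module $\T(\bar{\go}_{S_1})$ should be interchangeable with the torus $T_F$ and the Galois module $T_F(\bar F)$ for the purposes of computing $H^1(F,-)$ and $H^1(F_v,-)$, which is true because $\T$ is separated and of finite type and satisfies~\eqref{split}, so its $\bar{\go}_{S_1}$-points and $\bar F$-points agree up to the relevant divisible/torsion computations used above. Once this identification is made, the two inputs (finiteness of $\Sha^1$ of a torus and finiteness of local $H^1$ of a torus) yield the result immediately.
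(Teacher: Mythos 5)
Your proposal has a fundamental gap: it conflates the Galois module used in $\Sha_S(\T)$, namely the $\bar\go_{S_1}$-points $\T(\bar\go_{S_1})$ of the integral model, with the Galois module $T_F(\bar F)$ of the torus. These are genuinely different $G$-modules with genuinely different cohomology, and the "agree up to the relevant divisible/torsion computations" claim at the end does not hold. To see the problem concretely, take $\T=\G_{m,\go_S}$ in the split rank-one case: then $H^1(F,T_F)=H^1(F,\bar F^\times)=0$ by Hilbert~90, whereas $H^1(F,\T)=H^1(G,\bar\go_S^\times)$ is nonzero (it essentially encodes the $S$-class group via the long exact sequence attached to $1\to\bar\go_S^\times\to\bar F^\times\to\bar F^\times/\bar\go_S^\times\to1$). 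So the kernel of your map $\rho$ is not the classical Tate--Shafarevich group $\Sha^1(F,T_F)$ of the torus; in fact, in the split case the natural map $H^1(F,\T)\to H^1(F,T_F)$ is identically zero, so $\rho$ vanishes and your exact sequence degenerates to $0\to\Sha_S(\T)\to\Sha_S(\T)\to 0$ without giving any information. The two ingredients you invoke --- finiteness of $\Sha^1(F,T_F)$ via Poitou--Tate and finiteness of local $H^1(F_v,T_F)$ --- are both true for the torus module, but neither applies to the module $\T(\bar\go_{S_1})$ as written. (For the same reason, at places $v\notin S$ the local group $H^1(F_v,\T)$ for the integral module is actually $\Q/\Z$ in the split rank-one case, not finite, which is one more indication that the integral and rational modules cannot be interchanged.)

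The paper takes a different route that sidesteps both problems. It first reduces to the split case via the inflation--restriction sequence for a splitting extension $K/F$, using that $\T(\go_{K,S})$ is a finitely generated abelian group so that $H^1(K/F,\T(\go_{K,S}))$ is finite. In the split case it then applies Galois cohomology to the exact sequence $0\to\T(\bar\go_S)\to T_F(\bar F)\to T_F(\bar F)/\T(\bar\go_S)\to0$ and its local analogues, and a snake-lemma argument shows that $\Sha_S(\T)$ injects into $T_F(F)\backslash\bigoplus_{v\notin S}\bigl(T_F(F_v)/\T(\go_v)\bigr)\cong\Pic(\go_S)$, which is finite. If you want to salvage your strategy, you would need to establish finiteness of $\Sha^1$ and local $H^1$ for the integral module $\T(\bar\go_{S_1})$ rather than for $T_F(\bar F)$ --- and that brings you back to essentially the paper's class-group computation.
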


\begin{proof} Let  $K$ be a finite Galois extension of $F$ such that
$(T_F)_K$ is a split torus over $K$. It is easy to see that the
kernel of the canonical map
$\Sha_S({\T})\to \Sha_{S_K}({\T}\times \go_{K,S_K})$,
where $S_K$ is the set of primes of $K$ above $S$, is contained
in  $H^1(K/F, {\T}(\go_{K,S}))$.
Since ${\T}(\go_{K,S})\subseteq \T(\go_{K, S_1})$ is a finitely generated abelian group,
$H^1(K/F, {\T}(\go_{K,S}))$ is finite. Therefore the finiteness of
$\Sha_{S_K}({\T}\times_{\go_S} \go_{K,S})$ implies that
of $\Sha_S({\T})$ and we can assume that
$T_F$ is a split torus over $F$.
Enlarging $S$ if necessary (which will increase $\Sha_S(\T)$),
we can suppose that $S=S_1$ and
${\T}\cong \G_{m,\go_S}^d$, and even that $d=1$.

The short exact sequence
$$ 0\to {\T}(\bar{\go}_S)\rightarrow T_F(\bar{F})\rightarrow  T_F(\bar{F})/{\T}(\bar{\go}_S) \rightarrow 0 $$
gives the diagram of the following long exact sequence
$$
\begin{CD}
@.  T_F(F)/{\T}(\go_S) @>>> (T_F(\bar{F})/{\T}(\bar{\go}_S))^{G}
  @>>> H^1(F,{\T})@>>> {0} \\ 
@.  @VVV  @VVV  @VVV \\ 
0 @>>>   T_F(F_v)/{\T}(\go_v)  @>>> (T_F(\bar{F}_v)/{\mathbf
     T}(\bar{\go}_v))^{G_v} @>>> H^1(F_v,{\T})@>>>
   {0} 
\end{CD}$$
for any $v\not\in S$ by Galois cohomology, where $G=\Gal(\bar{F}/F)$
and $G_v=\Gal(\bar{F}_v/F_v)$. Since  for all $v\notin S$, $G$ acts
transitively on the primes of $\bar{\go}_S$ dividing $v$, one has
$$ \Ker ((T_F(\bar{F})/{\T}(\bar{\go}_S))^{G} \to \prod_{v\not \in S} (T_F(\bar{F}_v)/{\T}(\bar{\go}_v)))=0.$$
Note that the canonical map $(T_F(\bar{F})/{\T}(\bar{\go}_S))^{G}
\to \prod_{v\not \in S} (T_F(\bar{F}_v)/{\T}(\bar{\go}_v))$ takes
values in the direct sum. Thus by the snake lemma, and because ${\T}=\G_{m, \go_S}$,
$$\Sha_S({\T})\hookrightarrow
T_F(F)\backslash \bigoplus_{v\notin S} (T_F(F_v)/{\T}(\go_v))\cong \Pic(\go_S)$$
is finite.
\end{proof}

Let ${\T}$ be a commutative group scheme separated of finite type over $\go_{S_1}$ satisfying condition (\ref{split}) as before.
Consider the projective systems  $({\T}(\go_{S_1})/N{\T}(\go_{S_1}))_N$
and $(\Sel^{(N)}_S(F, {\T}))_N$, where for any pair of natural
integers $N \mid N'$, the transition map of the first system is
the canonical quotient map, and that of the second system is
 given by the multiplication-by-$N'/N$ map
$$\Sel^{(N')}_S(F, {\T})\xrightarrow{N'/N} \Sel^{(N)}_S(F, {\T}) . $$
Consider the limits
$$\widehat{{\T}(\go_{S_1})}:=\varprojlim_N {\T}(\go_{S_1})/N{\T}(\go_{S_1}) \quad 
\text{and} \quad \widehat{\Sel_S(F, {\T})}:=\varprojlim_{N} \Sel^{(N)}_S(F, {\T}).$$

\begin{cor}\label{T-S} With the above notations, we have a canonical isomorphism
$$ \widehat{{\T}(\go_{S_1})}\cong \widehat{\Sel_S(F, {\T})}.$$
\end{cor}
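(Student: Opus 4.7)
The plan is to obtain the isomorphism by passing to the inverse limit in the short exact sequence \eqref{short}
$$0 \to \T(\go_{S_1})/N\T(\go_{S_1}) \to \Sel^{(N)}_S(F,\T) \to \Sha_S(\T)[N] \to 0$$
over the positive integers $N$ ordered by divisibility.

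First I would check that the three terms really form compatible inverse systems under the transition maps already specified. For $N \mid N'$, the map $\Sel^{(N')}_S(F,\T) \to \Sel^{(N)}_S(F,\T)$ is multiplication by $N'/N$, which comes from the inclusion $\T[N] \hookrightarrow \T[N']$; a short Kummer-theoretic unwinding shows that on the subgroup $\T(\go_{S_1})/N'\T(\go_{S_1})$ this restricts to the natural quotient map $\T(\go_{S_1})/N'\T(\go_{S_1}) \twoheadrightarrow \T(\go_{S_1})/N\T(\go_{S_1})$, while the induced map on the quotient $\Sha_S(\T)[N']$ is multiplication by $N'/N$. So \eqref{short} is an exact sequence of inverse systems.

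Applying the (left-exact) functor $\varprojlim_N$ then yields
$$0 \to \widehat{\T(\go_{S_1})} \to \widehat{\Sel_S(F,\T)} \to \varprojlim_N \Sha_S(\T)[N].$$
Therefore it suffices to show that the last term is zero. This is the only nontrivial step, and it is exactly where Lemma~\ref{sha} comes in: since $\Sha_S(\T)$ is finite, it has some finite exponent $e$. Given any compatible system $(a_N)_N \in \varprojlim_N \Sha_S(\T)[N]$, compatibility with respect to $N \mid eN$ forces
$$a_N = (eN/N)\,a_{eN} = e\,a_{eN} = 0,$$
because $a_{eN}$ lies in $\Sha_S(\T)$ which is annihilated by $e$. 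Hence $\varprojlim_N \Sha_S(\T)[N] = 0$ and the desired isomorphism drops out.

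The main (mild) obstacle is the bookkeeping of the transition maps, to make sure that the map \emph{out} of $\Sel^{(N')}$ really is multiplication by $N'/N$ as an endomorphism of the finite group $\Sha_S(\T)$; once that is in place, the finiteness of $\Sha_S(\T)$ from Lemma~\ref{sha} kills the limit and the corollary follows formally.
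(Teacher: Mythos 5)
Your proof is correct and fills in precisely the argument the paper compresses into one line (``This follows from the short exact sequence (\ref{short}) and Lemma \ref{sha}.''): pass to the inverse limit of (\ref{short}), check the transition maps are compatible, and use the finiteness of $\Sha_S(\T)$ to kill $\varprojlim_N \Sha_S(\T)[N]$, noting that left-exactness plus vanishing of the third term already gives the isomorphism.
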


\begin{proof} This follows from the short exact sequence (\ref{short}) and Lemma \ref{sha}. \end{proof}

The next lemma is proved by similar arguments to those of Lemmas
1.3-1.5 in \cite{WX}.

\begin{lem} \label{h-k} There is a positive integer $h$ depending only on $\T$ such that
$${\T}(\go_{S_1})\cap \Big(hN\prod_{v\not\in S_1}{\T}(\go_v)\Big) \subseteq N {\T}(\go_{S_1}) $$ for any positive integer $N$.
\end{lem}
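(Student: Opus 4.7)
The plan is to translate the stated divisibility assertion via Kummer theory into a uniform bound on the exponent of a Selmer-type group, and then combine this with the finite generation of $\T(\go_{S_1})$. Set $M:=\T(\go_{S_1})$; this is a finitely generated abelian group, because after base change to a finite Galois extension $K/F$ that splits $\T$ we have $\T(\go_{K,S_{1,K}})\subseteq (\go_{K,S_{1,K}}^\times)^d$, which is finitely generated by Dirichlet, and $M$ sits in this group as Galois invariants. For each integer $N\ge 1$, define
\begin{equation*}
K_N := \ker\Bigl(M/NM \to \prod_{v\notin S_1} \T(\go_v)/N\T(\go_v)\Bigr).
\end{equation*}
The lemma is equivalent to the assertion that, for a single $h$ depending only on $\T$, the image of $K_{hN}$ under the projection $M/hNM\twoheadrightarrow M/NM$ is zero for every $N$.

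The key step is to bound the exponent of $K_N$ uniformly in $N$. Using the Kummer exact sequence $0\to\T[N]\to\T\xrightarrow{N}\T\to 0$ (fppf-locally on $\go_{S_1}$, licit because $\T$ is smooth by condition~\eqref{split}), the connecting map embeds $M/NM$ in $H^1(\go_{S_1},\T[N])$, and $K_N$ identifies with the classes trivialized in every $H^1(\go_v,\T[N])$ for $v\notin S_1$. Such classes form a subgroup of the Selmer-type group
\begin{equation*}
\Sel'^{(N)}(F,\T) := \ker\Bigl(H^1(F,\T[N]) \to \prod_{v\notin S_1} H^1(F_v,\T)\Bigr),
\end{equation*}
which fits, by the analogue of~\eqref{short} with $S$ replaced by $S_1$, into an exact sequence $0\to M/NM\to \Sel'^{(N)}(F,\T)\to \Sha_{S_1}(\T)[N]\to 0$. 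Since $\Sha_{S_1}(\T)$ is finite by Lemma~\ref{sha} (applied with $S_1$ in place of $S$), choosing $h_1$ to be an exponent of $\Sha_{S_1}(\T)$ yields $h_1 K_N=0$ for every $N$, because $h_1$ kills the obstruction class in $\Sha_{S_1}(\T)[N]$ and the preimage lifts to $M/NM$.

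Finally, I would deduce the lemma by combining this uniform bound with the elementary structure of $M$. Let $e$ be the exponent of $M_{\mathrm{tors}}$ and set $h:=h_1 e$ (possibly multiplied by a further factor to absorb residual torsion). Given $x\in M\cap hN\prod_{v\notin S_1}\T(\go_v)$, the class $[x]_{hN}\in K_{hN}$ is annihilated by $h_1$, so $h_1 x\in hNM$; writing $h_1 x=hNy=h_1 eN y$ gives $h_1(x-eNy)=0$, i.e.\ $x-eNy\in M[h_1]$, after which working modulo $M_{\mathrm{tors}}$ in the free quotient $M/M_{\mathrm{tors}}\cong\Z^r$ shows $x\in NM$ up to torsion. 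The main obstacle I expect is precisely this last descent: controlling the torsion piece of $x-eNy$ uniformly for all $N$, especially when $\gcd(N,h_1)$ is large, where one must either enlarge $h$ by a controlled factor of $e$ or use the local hypothesis $x\in hN\prod_v\T(\go_v)$ directly in $M_{\mathrm{tors}}$ to kill the residue.
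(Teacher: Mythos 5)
Your reformulation of the lemma as ``the image of $K_{hN}$ in $M/NM$ vanishes'' is correct, and the general strategy (bound the exponent of the Selmer-type kernel, then descend) is the right one. However, there are two genuine gaps, one of which you already flag.

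The first and more serious gap is the claim that an exponent $h_1$ of $\Sha_{S_1}(\T)$ kills every $K_N$. Via Kummer, $K_N$ is a subgroup of $\ker\bigl(H^1(F,\T[N])\to\prod_{v\notin S_1}H^1(F_v,\T[N])\bigr)$, i.e.\ it sits inside a Tate--Shafarevich group with coefficients in the \emph{finite} module $\T[N]$, not in $\T$. The exact sequence $0\to M/NM\to\Sel^{(N)}_{S_1}(F,\T)\to\Sha_{S_1}(\T)[N]\to 0$ does not control this: elements of $K_N$ lie in the subgroup $M/NM$, so their image in $\Sha_{S_1}(\T)[N]$ is already zero and the sequence gives no new information. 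Finiteness of $\Sha_{S_1}(\T)$ does not bound the exponent of $\ker\bigl(H^1(F,\T[N])\to\prod_v H^1(F_v,\T[N])\bigr)$ as $N$ varies, and that is what needs bounding. The paper's route is different: pass to a finite Galois splitting field $K$ of $T_F$, observe that ${\T}'[N]$ is then a product of cyclotomic modules, bound the local--global kernel over $K$ by $2$ using a Grunwald--Wang result (\cite{NSW2}, Theorem 9.1.9(ii)), and then bound the inflation term $H^1(K/F,\T[N](\go_{S'}))$ by $\gcd([K:F],r_K)$, yielding a uniform bound $c=2\gcd([K:F],r_K)$ on this kernel.

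The second gap is the descent from $h_1x\in hNM$ to $x\in NM$, which you yourself identify as problematic. Writing $h=h_1 e$ and $h_1(x-eNy)=0$ leaves you with an $h_1$-torsion residue $z=x-eNy$ that need not lie in $NM$, and working modulo $M_{\mathrm{tors}}$ only recovers the statement in the free quotient; the hypothesis $x\in hN\prod_v\T(\go_v)$ has to be used again to kill $z$. The paper does exactly this: with $h=2cr_K$ and $cx=hNy$, it sets $z=x-2r_KNy$, notes $z$ is $c$-torsion \emph{and} lies in $2r_KN\prod_w T_K(K_w)$, and then invokes a second Grunwald--Wang input (\cite{NSW2}, Theorem 9.1.11(i)) to write $z=r_KN\xi$ with $\xi\in(K^\times)^d$; since $\xi$ is then a torsion point and $r_K$ annihilates all roots of unity of $K$, one gets $z=0$. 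Without such an argument the residual torsion cannot be controlled uniformly in $N$.
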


\begin{proof} Let $K$ be a finite Galois extension of $F$ such that $T_F={\T}\times_{\go_{S_1}} F$ splits over $K$, let $S'$ be the places of $K$
above $S_1$ and let ${\T}'={\T}\times_{\go_{S_1}} \go_{S'}$.
Let $G_K$ be the absolute Galois group of $K$. Then
${\T}'[N](\bar{\go}_{S'})$ is a sub-$G_K$-module of
$T_F[N](\bar{K})=\mu_{N,K}^d(\bar{K})$.
It follows that ${\T}'[N](\bar{\go}_{S'})$ is isomorphic to
a product of $\mu_{m, K}$'s.
We have the following commutative diagram
$$\begin{CD}
 H^1(K/F,{\T}[N](\go_{S'})) @>>> H^1(F, {\T}[N]) @>>> H^1(K, {\T}'[N]) \\
@.  @VVV  @VVV   \\
@.  \prod_{v\not\in S_1} H^1(F_v, {\T}[N])  @>>> \prod_{w|v, v\not \in S_1} H^1(K_w, {\T}'[N]).
\end{CD}$$
As $H^1(K/F,{\T}[N](\go_{S'}))$ is killed by $\gcd([K:F], r_K)$, where $r_K$ is the number of roots of unity inside $K$, and the
kernel of the last vertical arrow is killed by $2$ (\cite{NSW2}, Theorem (9.1.9)(ii)),
the kernel
$$ \Ker\big(H^1(F, {\T}[N])\rightarrow \prod_{v\not\in S_1} H^1(F_v, {\T}[N])\big) $$
is killed by $c:=2\gcd([K:F], r_K)$ for any positive integer $N$.
Let $h=2 c r_K$ which depends only on $\T$.

For any $x\in {\T}(\go_{S_1})\cap \big(h N\prod_{v\not\in S_1}{\T}(\go_v)\big)$, the following commutative diagram of Kummer exact sequences
$$
\begin{CD}
   {\T}(\go_{S_1}) @>{\cdot h N}>> {\T}(\go_{S_1}) @>>> H^1(F, {\T}[h N])  \\
@VVV  @VVV  @VVV \\
 \prod_{v\not\in S_1} {\T}(\go_v)@>{\cdot h N}>> \prod_{v\not\in S_1} {\T}(\go_v) @>>> \prod_{v\not\in S_1} H^1(F_v, {\T}[h N])
\end{CD}$$
implies that there exists $y\in {\T}(\go_{S_1})$ such that $c x= h N y$. Let
$$z=x-2r_K Ny \in {\T}(\go_{S_1}).$$
We have $$z\in 2r_KN \prod_{v\notin S_1}{\T}(\go_v)\subseteq
2r_KN\prod_{w\notin S'}T_K(K_w)$$ where $T_K:=T_F\times_F K$. By \cite{NSW2}, Theorem (9.1.11)(i), there exists $\xi\in T_K(K)=(K^\times)^{d}$ such that $z=r_K N \xi $. As
$z$ is a $c$-torsion point, $\xi$ is also a torsion point. Since $r_K$ kills all roots of unity in $K$, $z$ is trivial and $x=2r_KNy \in N {\T}(\go_{S_1})$. The proof is complete. \end{proof}

\begin{prop} \label{equiv}  Let $\overline{{\T}(\go_{S_1})}$ be the topological closure of ${\T}(\go_{S_1})$ inside $T_F(\A_F^S)$. Then the natural homomorphisms
$${\T}(\go_{S_1})/N {\T}(\go_{S_1}) \rightarrow  \overline{{\T}(\go_{S_1})}/(\overline{{\T}(\go_{S_1})}\cap N (\prod_{v\in S_1\setminus S} T_F(F_v) \times \prod_{v\not\in S_1}{\T}(\go_v))$$
induce an isomorphism of topological groups $\widehat{{\T}(\go_{S_1})} \cong \overline{{\T}(\go_{S_1})}$.
In particular, the inclusion $\T(\go_{S_1})\subset \overline{\T(\go_{S_1})}$ induces an equality of the torsion parts
$$ \T(\go_{S_1})_{\mathrm{tors}}=(\overline{\T(\go_{S_1})})_{\mathrm{tors}}.$$
\end{prop}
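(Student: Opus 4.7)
The plan is to invoke Lemma~\ref{h-k} to compare two cofinal filtrations of $\T(\go_{S_1})$, then to identify each finite quotient between $\T(\go_{S_1})$ and its closure by projecting into the profinite group $G_{S_1}/NG_{S_1}$, and finally to pass to the projective limit. Throughout I would write $G_{S_1}:=\prod_{v\in S_1\setminus S}T_F(F_v)\times\prod_{v\notin S_1}\T(\go_v)$, an open subgroup of $T_F(\A_F^S)$ containing the diagonal image of $\T(\go_{S_1})$.

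First I would note the inclusion $N\T(\go_{S_1})\subseteq\T(\go_{S_1})\cap NG_{S_1}$, which is clear from the diagonal embedding, together with the reverse inclusion (up to a universal factor $h$) coming from Lemma~\ref{h-k} applied to the coordinates outside $S_1$:
\[
\T(\go_{S_1})\cap hNG_{S_1}\subseteq\T(\go_{S_1})\cap hN\textstyle\prod_{v\notin S_1}\T(\go_v)\subseteq N\T(\go_{S_1}).
\]
These make the two filtrations $\{N\T(\go_{S_1})\}_N$ and $\{\T(\go_{S_1})\cap NG_{S_1}\}_N$ cofinal on $\T(\go_{S_1})$; in particular each quotient $\T(\go_{S_1})/(\T(\go_{S_1})\cap NG_{S_1})$ is finite, and the corresponding projective limit equals $\widehat{\T(\go_{S_1})}$.

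Next I would identify each $\T(\go_{S_1})/(\T(\go_{S_1})\cap NG_{S_1})$ with $\overline{\T(\go_{S_1})}/(\overline{\T(\go_{S_1})}\cap NG_{S_1})$ via the continuous quotient map $\pi_N\colon G_{S_1}\to G_{S_1}/NG_{S_1}$. The target is the product of the finite groups $T_F(F_v)/NT_F(F_v)$ for $v\in S_1\setminus S$ and $\T(\go_v)/N\T(\go_v)$ for $v\notin S_1$, hence profinite and in particular Hausdorff. The image $\pi_N(\T(\go_{S_1}))$ is finite by the previous step, and hence closed; by continuity $\pi_N(\overline{\T(\go_{S_1})})$ is contained in the closure of $\pi_N(\T(\go_{S_1}))$, which equals $\pi_N(\T(\go_{S_1}))$ itself, yielding the desired isomorphism at level $N$.

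Passing to the projective limit over $N$ and noting that $\bigcap_N NG_{S_1}=\{1\}$---because no non-archimedean factor $T_F(F_v)$ (since $\infty_F\subseteq S$) and no compact factor $\T(\go_v)$ contains a non-trivial infinitely divisible element---the right-hand side is naturally identified with $\overline{\T(\go_{S_1})}$, giving the topological isomorphism $\widehat{\T(\go_{S_1})}\cong\overline{\T(\go_{S_1})}$. The torsion statement then follows at once, since the profinite completion of the finitely generated abelian group $\T(\go_{S_1})$ has the same torsion subgroup as $\T(\go_{S_1})$ itself. The main hurdle I anticipate is precisely this last identification: one must verify that $\overline{\T(\go_{S_1})}$ is complete with respect to the filtration induced by $\{\overline{\T(\go_{S_1})}\cap NG_{S_1}\}_N$, the delicate topological step in the spirit of Stoll's argument for abelian varieties.
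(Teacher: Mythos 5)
Your plan is essentially the paper's proof, organised slightly differently: where you argue via the cofinality of the two filtrations $\{N\T(\go_{S_1})\}_N$ and $\{\T(\go_{S_1})\cap NG_{S_1}\}_N$ (both resting on Lemma~\ref{h-k}), the paper introduces the kernel groups $A_N=(\T(\go_{S_1})\cap NG_{S_1})/N\T(\go_{S_1})$, shows the level-$N$ surjection has kernel $A_N$, and then takes a projective limit using the Mittag--Leffler condition, concluding with $\varprojlim_N A_N=0$, which is the same content as your cofinality statement. Your level-$N$ identification via $\pi_N\colon G_{S_1}\to G_{S_1}/NG_{S_1}$ is a clean reformulation of the paper's density argument (dense image of a finite group in a Hausdorff quotient is all of it); note though that for $G_{S_1}/NG_{S_1}$ to carry the product topology you still need to know $NG_{S_1}$ is closed and that the continuous bijection to $\prod_v G_v/NG_v$ is a homeomorphism, which requires an open-mapping-type argument that you leave implicit.

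The step you single out at the end --- that $\overline{\T(\go_{S_1})}$ is complete for the filtration $\{\overline{\T(\go_{S_1})}\cap NG_{S_1}\}_N$, equivalently that the natural map $\overline{\T(\go_{S_1})}\to\varprojlim_N \overline{\T(\go_{S_1})}/(\overline{\T(\go_{S_1})}\cap NG_{S_1})$ is surjective --- is indeed the crux, and your proof leaves it unresolved. The paper closes it in one sentence by citing Nikolov--Segal (\cite{NS}, Theorem~1.1), i.e., the strong completeness of topologically finitely generated profinite groups: in a finitely generated profinite group every finite-index subgroup is open, so the given topology already coincides with the profinite topology and the completion adds nothing. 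This is the one genuinely new ingredient you are missing. A smaller point: your justification that $\bigcap_N NG_{S_1}=\{1\}$ (``no factor contains a nontrivial infinitely divisible element'') is not quite the right formulation --- an element of $\bigcap_N NG_{S_1}$ need not be infinitely divisible by a single compatible chain --- and what is really used for injectivity is only $\overline{\T(\go_{S_1})}\cap\bigcap_N NG_{S_1}=\{1\}$; but this is the less delicate half, and the paper's framing (the short exact sequence and $\varprojlim A_N=0$) sidesteps it entirely by packaging injectivity into the claimed equality $\widehat{\overline{\T(\go_{S_1})}}=\overline{\T(\go_{S_1})}$.
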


\begin{proof} Let
$$ A_N= ({\T}(\go_{S_1})\cap N (\prod_{v\in S_1\setminus S} T_F(F_v) \times \prod_{v\not\in S_1}{\T}(\go_v))/N {\T}(\go_{S_1}) $$
for any positive integer $N$. Since ${\T}(\go_{S_1})$ is a finitely generated abelian group, the quotient $\T(\go_{S_1})/N\T(\go_{S_1})$ is finite for all $N$. As the canonical map
$$\T(\go_{S_1})/N\T(\go_{S_1})\to  \overline{{\T}(\go_{S_1})}/(\overline{{\T}(\go_{S_1})}\cap N (\prod_{v\in S_1\setminus S} {T_F}(F_v) \times \prod_{v\not\in S_1}\T(\go_v))$$
has dense image, it is surjective.
Therefore one has the following exact sequence
 $$0\rightarrow A_N \rightarrow {\T}(\go_{S_1})/N {\T}(\go_{S_1}) \rightarrow  \overline{{\T}(\go_{S_1})}/(\overline{{\T}(\go_{S_1})}\cap N (\prod_{v\in S_1\setminus S} T_F(F_v) \times \prod_{v\not\in S_1}\T(\go_v))\rightarrow 0 $$
for any positive integer $N$. Moreover,
the inverse system $(A_N)_N$ satisfies the Mittag-Leffler condition by the
finiteness of $A_N$. By taking the inverse limits, one obtains the exact
sequence
 $$ 0\rightarrow \varprojlim_{N} A_N  \rightarrow \widehat{{\T}(\go_{S_1})} \rightarrow \widehat{\overline{{\T}(\go_{S_1})}}=\overline{{\T}(\go_{S_1})} \rightarrow 0, $$
the last equality follows from \cite{NS}, Theorem 1.1. On the other hand, $\varprojlim_{N} A_N =0$ by Lemma~\ref{h-k} and the the desired
isomorphism is proved. The equality on the torsion parts then follows easily from
the fact that $\T(\go_{S_1})$ is a finitely generated abelian group.
\end{proof}

Recall that in the Galois cohomology groups, ${\T}[N]$ stands
for the Galois module $\T[N](\bar{\go}_{S_1})$.

\begin{lem}\label{cohtor} There exists a positive integer $c$ depending only on $\T$ such that $c$ kills all $H^1(F_N/F, {\T}[N])$ for all positive integers $N$, where $F_N=F({\T}[N])$ is the smallest extension of $F$ such that $\T[N](\bar{\go}_{S_1})\subseteq T_F(F_N)$.
\end{lem}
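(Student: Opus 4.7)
My plan is to reduce the estimate to a Kummer-theoretic computation over the splitting field of $\T$, and then to invoke Schinzel's theorem on abelian radical extensions. Let $K/F$ denote the (finite, Galois) splitting field of $\T$. Over $K$ one has $\T\simeq \G_{m,K}^{d}$ with $d=\dim\T$, and hence $\T[N]\simeq \mu_N^d$ as $\Gal(\bar F/K)$-modules. In particular $F_N\cdot K=K(\mu_N)$ and the compositum $F_NK$ is Galois over $F$. Since $\Gal(F_N K/F_N)$ acts trivially on $\T[N]$, inflation yields an injection
\begin{equation*}
H^1(F_N/F,\T[N])\hookrightarrow H^1(F_NK/F,\T[N]),
\end{equation*}
so it suffices to bound the right hand side uniformly in $N$.

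The Hochschild-Serre five-term sequence for the tower $F\subset K\subset F_NK$ reads
\begin{equation*}
0\to H^1(\Gal(K/F),\T[N]^H)\to H^1(F_NK/F,\T[N])\to H^1(H,\T[N])^{\Gal(K/F)},
\end{equation*}
with $H:=\Gal(F_NK/K)=\Gal(K(\mu_N)/K)$. The invariants are $\T[N]^H=\mu_N(K)^d=\mu_{\gcd(N,r_K)}^d$, killed by $r_K$ (the number of roots of unity in $K$, introduced in the proof of Lemma~\ref{h-k}). Since cohomology of a finite group is killed by both the group order and the exponent of the module, the left-hand term $H^1(\Gal(K/F),\T[N]^H)$ is killed by $\gcd([K:F],r_K)$, uniformly in $N$.

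The main step is to bound $H^1(H,\T[N])=H^1(K(\mu_N)/K,\mu_N)^d$ uniformly. Applying Hilbert 90 to the Kummer sequence $1\to\mu_N\to K(\mu_N)^\times\xrightarrow{N}K(\mu_N)^{\times N}\to 1$ gives
\begin{equation*}
H^1(K(\mu_N)/K,\mu_N)\cong (K^\times\cap K(\mu_N)^{\times N})/K^{\times N}.
\end{equation*}
For $a$ in the numerator, $K(\sqrt[N]{a})\subset K(\mu_N)$ is an abelian Galois extension of $K$. By Schinzel's theorem on abelian radical extensions, $a^w\in K^{\times N}$ where $w=|\mu_N(K)|$ divides $r_K$; consequently $a^{r_K}\in K^{\times N}$, i.e.\ $H^1(K(\mu_N)/K,\mu_N)$ is annihilated by $r_K$ for every $N$.

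Combining the two bounds, $H^1(F_N/F,\T[N])$ is killed by $c:=\gcd([K:F],r_K)\cdot r_K$, a constant depending only on $\T$. The main obstacle is the uniform bound on $H^1(K(\mu_N)/K,\mu_N)$: the naive cohomological estimates are controlled by $[K(\mu_N):K]$ and hence grow with $N$, and it is Schinzel's theorem that circumvents this dependence by converting the abelianness of the radical subextension into an arithmetic power condition on $a$.
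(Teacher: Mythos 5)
Your proof is correct, and it takes a genuinely different route from the paper's at the key step. Both arguments start the same way: inflate $H^1(F_N/F,\T[N])$ into the cohomology of a Galois extension that contains the splitting field $K$ of $T_F$, and then peel off a contribution from $\Gal(K/F)$ to reduce to a cyclotomic computation over $K$ (the paper phrases this as ``reduce to the split case,'' while you make it explicit via the five-term sequence for $F\subset K\subset F_NK$ — this is cleaner). The real divergence is in how the cyclotomic term $H^1(K(\mu_N)/K,\mu_N)$ is annihilated. The paper factors $N=\prod p_i^{m_i}$, applies the five-term sequence for $K\subset K(\mu_{p_i^{m_i}})\subset K(\mu_N)$ at each prime, bounds the outer terms by $r_F$ (roots of unity) and by $2$ (NSW Prop.~9.1.6, the essentially-classical fact that $H^1(F(\mu_{p^m})/F,\mu_{p^m})$ is killed by $2$), and multiplies. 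You instead identify $H^1(K(\mu_N)/K,\mu_N)$ with $(K^\times\cap K(\mu_N)^{\times N})/K^{\times N}$ by Hilbert~90 and invoke Schinzel's theorem on abelian radical extensions to conclude directly that this quotient is killed by $r_K$, with no prime decomposition and no factor of $2$. Your route is shorter and gives a marginally better constant, at the cost of invoking a less elementary external input (Schinzel) in place of the more hands-on prime-by-prime estimate. You also quietly bypass the paper's auxiliary module $\nu_N$ — rightly so, since under the standing hypothesis~(\ref{split}) one has $\T[N](\bar\go_{S_1})=T_F[N](\bar F)=\mu_N^d$, so $\nu_N=0$ and that part of the paper's argument is vacuous anyway.
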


\begin{proof} Let $K$ be a finite Galois extension of $F$ containing $F_N$. Then $\Gal(K/F_N)$ acts trivially on $\T[N](\bar{\go}_{S_1})$ and
we have an exact sequence
$$ 0\to H^1(F_N/F, {\T}[N])\to H^1(K/F,{\T}[N]) \to
H^1(K/F_N, {\T}[N]).$$
Taking $K$ for the compositum of $F_N$ with the splitting field
of the torus $T_F$, we are reduced to the case when $T_F$ is split.

Define the Galois module $\nu_N$ by the exact sequence
$$0\to {\T}[N](\bar{\go}_S) \to T_F[N](\bar{F})=\mu_{N,F}^d \to \nu_N \to 0.$$
Then one has
$$ \nu_N(F) \rightarrow H^1(F(\mu_N)/F, {\T}[N])\rightarrow H^1(F(\mu_N)/F, \mu_N^d) $$ and
$$ 0\rightarrow H^1(F_N/F, {\T}[N]) \rightarrow  H^1(F(\mu_N)/F, {\T}[N]) $$
by Galois cohomology. Let $r_F$
be the number of roots of unity inside $F$. Then $\nu_N(F)$ is killed by $r_F$
and one only needs to prove that $H^1(F(\mu_N)/F, \mu_N)$ is killed by a positive integer which is independent of $N$.

Let $N=\prod_{i=1}^t p_i^{m_i}$ be the prime factorization of $N$. Then one has
$$ H^1(F(\mu_N), \mu_N) = \prod_{i=1}^t H^1(F(\mu_N)/F, \mu_{p_i^{m_i}}) $$ and the exact sequence
$$ 0\rightarrow H^1(F(\mu_{p_i^{m_i}})/F, \mu_{p_i^{m_i}}) \rightarrow H^1(F(\mu_N)/F, \mu_{p_i^{m_i}}) \rightarrow H^1(F(\mu_N)/F(\mu_{p_i^{m_i}}), \mu_{p_i^{m_i}})^{G_i} $$ where $G_i=\Gal(F(\mu_{p_i^{m_i}})/F)$. Since $$ H^1(F(\mu_N)/F(\mu_{p_i^{m_i}}), \mu_{p_i^{m_i}})^{G_i}= \Hom(\Gal(F(\mu_N)/F(\mu_{p_i^{m_i}})), \mu_{p_i^{m_i}}(F)), $$ one concludes that $H^1(F(\mu_N), \mu_N)$ is killed by $2r_F$ by
\cite{NSW2}, Proposition 9.1.6.
\end{proof}

The following lemma is an application of the Chebotarev density theorem.
Recall that Equation~\eqref{sel} gives a canonical map
$$\Sel^{(N)}_S(F, \T)\to \T(\go_v)/N\T(\go_v)$$ for all $v\notin S_1$.

\begin{lem}\label{density} Let $c$ be a positive integer as in Lemma \ref{cohtor}.
Let $Q\in \Sel^{(N)}_S(F,{\T})$ and let $n$ be the order of $cQ$ inside
$\Sel^{(N)}_S(F,{\T})$.
Then the density of the following set
$$ \{ v\not\in S_1 \mid v \ \text{splits completely in $F_N/F$ and the image of $Q$
in ${\T}(\go_v)/N{\T}(\go_v)$ is $0$} \} $$
inside $\Omega_F$ is less than or equal to $1/(n\cdot [F_N:F])$, where $F_N=F({\T}[N])$.
\end{lem}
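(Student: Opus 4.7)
\emph{Proof sketch.} My plan is to realize the set in question, up to a density-zero subset, as the set of primes splitting completely in a certain Galois extension $L'/F$ containing $F_N$, compute its density by Chebotarev, and then use Lemma~\ref{cohtor} to bound $[L':F_N]$ from below by $n$.

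First I will take $\phi\colon G_{F_N}\to \T[N]$ to be the restriction of $Q$; this is a homomorphism because $G_{F_N}$ acts trivially on $\T[N]$, and the fact that $Q$ is a global class will make $\phi$ equivariant for the $G_F$-actions on $G_{F_N}$ (by conjugation) and on $\T[N]$ (naturally), so $\ker\phi$ will be normal in $G_F$ and its fixed field $L'\subseteq\bar{F}$ will be Galois over $F$ with $\Gal(L'/F_N)\simeq M:=\phi(G_{F_N})$.

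Next, for the local step, I will fix $v\notin S_1$ with $v\nmid N$ splitting completely in $F_N$ and argue as follows. By \eqref{split}, $N$-multiplication on $\T$ is \'etale over $\go_v$, and combined with the triviality of the $G_{F_v}$-action on $\T[N]$, the Kummer sequence should identify $\T(\go_v)/N\T(\go_v)$ with $\T[N]$ via evaluation at Frobenius on the unramified part of $H^1(F_v,\T[N])=\Hom(G_{F_v},\T[N])$. Since the image of $Q$ in this $H^1$ is $\phi|_{G_{F_v}}$, the coordinate of $Q$ in $\T(\go_v)/N\T(\go_v)$ will be $\phi(\mathrm{Frob}_v)$. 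After discarding the finitely many primes above $N$, the set in the lemma will thus coincide with $\{v:v\text{ splits completely in }L'\}$, whose density is $1/[L':F]=1/(|M|\,[F_N:F])$ by Chebotarev.

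It will then remain to prove $|M|\ge n$. Setting $e=\exp(M)$, the order of $\phi$ in $\Hom(G_{F_N},\T[N])$, and letting $k_0$ be the order of $Q$ in $\Sel^{(N)}_S(F,\T)$, I will verify $e\mid k_0$; and since $eQ$ lies in the kernel of the restriction map, which Lemma~\ref{cohtor} annihilates by $c$, I will also have $k_0\mid ce$. Writing $k_0=c'e$ with $c'\mid c$ will then give $n=k_0/\gcd(k_0,c)\le c'e/c'=e\le|M|$. The hardest step will be the local identification in the third paragraph between the Selmer coordinate of $Q$ and the Frobenius evaluation of $\phi$; once that is in place, both the Chebotarev density computation and the bound $|M|\ge n$ will fall out easily.
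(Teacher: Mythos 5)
Your proof is correct and follows essentially the same strategy as the paper's: construct the extension of $F_N$ cut out by the restriction $\phi$ of $Q$ to $G_{F_N}$, identify the local triviality of $Q$ at a prime $v$ split in $F_N$ with the vanishing of $\phi$ on Frobenius via the unramified Kummer isomorphism, apply Chebotarev, and bound the degree over $F_N$ from below by $n$ using Lemma~\ref{cohtor}. Your observation that $L'/F$ is Galois (via the $G_F$-equivariance $\phi(\sigma\tau\sigma^{-1})=\sigma\cdot\phi(\tau)$, which follows from the cocycle identity) is a small refinement making the density exactly $1/[L':F]$ rather than merely bounded by it, and your bound $n\le e=\exp(M)$ via $e\mid k_0\mid ce$ is a clean variant of the paper's argument that $n$ divides the order of $\mathrm{res}(Q)$.
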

\begin{proof} As $\Sel^{(N)}_S(F,{\T})\subseteq
\Sel^{(N)}_{S_1}(F,{\T})$, one can replace $S$ with $S_1$ and suppose that
$S=S_1$. Let $S'$ be the primes of $F_N$ above the primes of $S$ and let
$$\Sel^{(N)}_S(F_N,{\T})=\Ker(H^1(F_N, \T)\to \prod_{w\notin S'} H^1(F_{N,w}, \T)).$$
The restriction map of Galois cohomology induces a homomorphism
$$\Sel^{(N)}_S(F,{\T}) \xrightarrow{\res}  \Sel^{(N)}_S(F_N,{\T}) $$
whose kernel is killed by $c$ by construction (Lemma~\ref{cohtor}). This implies that
the order of the image of $Q$ in $\Sel^{(N)}_S(F_N,{\T})$ is a multiple of $n$. Since
$$ \Sel^{(N)}_S(F_N,{\T}) \subseteq H^1(F_N, {\T}[N])=\Hom_{\mathrm{cont}} (\Gal(\bar{F}/F_N), {\T}[N]), $$
$Q$ gives rise to a
homomorphism $\alpha \in \Hom_{\mathrm{cont}}(\Gal(\bar{F}/F_N), {\T}[N])$
of order a multiple of $n$.
Let $L=\bar{F}^{\Ker(\alpha)}$. This is a Galois
extension of $F_N$ of degree a multiple of $n$.

If $v\not\in S$ splits completely in $F_N/F$ and $w$ is a prime in $F_N$ above $v$, then one has the following commutative diagram with injective row maps
$$
\begin{CD}
   \Sel^{(N)}_S(F_N,{\T}) @>>> \Hom_{\mathrm{cont}}(\Gal(\bar{F}/F_N), {\T}[N]) \\
    @VVV @VVV \\
 {\T}(\go_v)/N {\T}(\go_v) \cong {\T}(\go_{w})/N {\T}(\go_{w}) @>>> \Hom_{\mathrm{cont}}(\Gal(\overline{F_{N,w}}/ F_{N,w}),{\T}[N])
\end{CD}$$
where $\overline{F_{N,w}}$ is the algebraic closure of $F_{N,w}$. So the image of $Q$ in ${\T}(\go_v)/N {\T}(\go_v)$ is trivial if and only if
the decomposition group at any prime of $L$ above $w$ is trivial. The latter is equivalent to $w$ being split in $L/F_N$ completely. Therefore the set of primes of $F$ we consider is the same as the set of primes in $F$, not in $S$, and which split completely in $L$. By the Chebotarev density theorem, the density of this set is at most
$$\frac{1}{[L:F]}=\frac{1}{[L:F_N]\cdot [F_N:F]} \leq \frac{1}{n\cdot [F_N:F]}$$ and the proof is complete.\end{proof}

The main result of this section is the following theorem.

\begin{thm}\label{inters} If $Z$ is a finite closed subset of a torus
$T_F$ over $F$, then
$$ \overline{T_F(F)}^{S} \cap Z(\A_F^S) = Z(F) $$
where $\overline{T_F(F)}^S$ is the topological closure of $T_F(F)$ inside $T_F(\A_F^S)$.
\end{thm}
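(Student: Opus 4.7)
The inclusion $Z(F) \subseteq \overline{T_F(F)}^S \cap Z(\A_F^S)$ is immediate, so I focus on the reverse inclusion, adapting the Selmer-Chebotarev strategy of \cite{St} to the torus setting and using the preparatory results developed above. Since $\A_F^S$ and $F$ are reduced rings, replacing $Z$ by $Z_{\mathrm{red}}$ does not affect either $Z(F)$ or $Z(\A_F^S)$, so I may assume $Z$ is \'etale over $F$. First I would enlarge $S$ to a finite set $S_1 \supseteq S$ so that $T_F$ admits a group scheme model $\T$ over $\go_{S_1}$ satisfying \eqref{split}, $Z$ extends to a finite \'etale closed subscheme $\mathcal Z \subseteq \T$ over $\go_{S_1}$, and every $z \in Z(F)$ extends to an $\go_{S_1}$-section of $\mathcal Z$. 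Given $P \in \overline{T_F(F)}^S \cap Z(\A_F^S)$, one has $P_v \in \mathcal Z(\go_v) \subseteq \T(\go_v)$ for all $v \notin S_1$, and any approximating sequence $(Q_n) \subset T_F(F)$ is eventually contained in $\T(\go_{S_1})$ by Lemma~\ref{section-lg}, so $P \in \overline{\T(\go_{S_1})}$ inside $T_F(\A_F^S)$.

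Next I would pass to the Selmer-theoretic side: by Proposition~\ref{equiv} and Corollary~\ref{T-S}, $\overline{\T(\go_{S_1})} \cong \widehat{\T(\go_{S_1})} \cong \widehat{\Sel_S(F,\T)}$, and under this isomorphism $P$ corresponds to a compatible family $(\alpha_N)_N$ with $\alpha_N \in \Sel^{(N)}_S(F,\T)$ whose image in $\T(\go_v)/N\T(\go_v)$ is the class of $P_v$ for each $v \notin S_1$. Assume for contradiction $P \notin Z(F)$. The finite set $Z(F)$ lies in $\T(\go_{S_1})$ by construction, and for each $z \in Z(F)$ the shifted family $(\alpha_N - [z])_N$ corresponds to $P \cdot z^{-1} \in \overline{\T(\go_{S_1})}$; by hypothesis there is some $v(z) \notin S_1$ with $P_{v(z)} \ne z$. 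The heart of the argument is Lemma~\ref{density} applied to each $\alpha_N - [z]$: if the order $n$ of $c\cdot (\alpha_N - [z])$ in $\Sel^{(N)}_S(F,\T)$ is $> 1$ (where $c$ is the constant of Lemma~\ref{cohtor}), then the density of primes $v$ that split completely in $F_N/F$ and satisfy $(P_v - z) \equiv 0 \pmod{N}$ is at most $1/(n\cdot [F_N:F])$. Combined with the divisibility control of Lemma~\ref{h-k} and summed over the finite set $Z(F)$, on a positive-density set of $v$ the point $P_v$ would then have to lie on a nontrivial (non-$F$-rational) component of $\mathcal Z$; a further Chebotarev count, exploiting the splitting behaviour of the residue-field extensions defining these components, should yield the desired contradiction.

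The delicate step is this last one: coordinating the estimates of Lemma~\ref{density} uniformly in $N$ and in $z \in Z(F)$, and carefully matching them against the integral constraint $P_v \in \mathcal Z(\go_v)$, so as to force the existence of some $z \in Z(F)$ with $P = z$ globally. This is the torus analogue of the core of Stoll's proof of \cite{St}, Theorem~3.11, and it is where Lemmas~\ref{sha}--\ref{density} come into play together, with the finiteness of $\Sha_S(\T)$ ensuring that the profinite completion $\widehat{\T(\go_{S_1})}$ embeds into the inverse limit of Selmer groups, Lemma~\ref{h-k} providing divisibility control on $\T(\go_{S_1})$, and Lemmas~\ref{cohtor} and \ref{density} supplying the density bound that drives the contradiction.
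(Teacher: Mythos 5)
Your overall strategy is the same as the paper's (identify $\overline{\T(\go_{S_1})}$ with a subgroup of $\widehat{\Sel_S(F,\T)}$ via Proposition~\ref{equiv} and Corollary~\ref{T-S}, then derive a contradiction from the Chebotarev density bound of Lemma~\ref{density}), and you correctly assemble the roles of Lemmas~\ref{sha}, \ref{h-k}, \ref{cohtor}, \ref{density}. But your proposal stops short of a proof, and the place where it stalls is exactly the place where the paper does something you did not: it first reduces to the case $Z(F)=Z(\bar F)$ by base-changing to a finite extension $K/F$ with $Z(K)=Z(\bar F)$ and appealing to a descent argument as in \cite{PV}, Proposition~3.9 (together with $Z(K)\cap Z(\A_F^S)=Z(F)$). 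Once $Z(F)=Z(\bar F)$, the subscheme $Z$ has no ``nontrivial components'' left, $Z(F)=Z(F_v)$ for every $v$, and the density argument closes instantly: Lemma~\ref{density} produces a $v_0$ with $x_{v_0}\neq z$ for all $z\in Z(F)=Z(F_{v_0})$, contradicting $(x_v)\in Z(\A_F^S)$. Without that reduction you are forced, as you note, into ``a further Chebotarev count exploiting the splitting behaviour of the residue-field extensions defining these components''; that is not a filled-in step, and coordinating it with the already delicate uniformity in $N$ and $z$ is precisely the difficulty the base change eliminates.

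Two further points you should make explicit. First, the ``uniform $N$'' step: you need a single $N$ so that the order of $c(\alpha_N-[z])$ in $\Sel^{(N)}_S(F,\T)$ exceeds some fixed $n>\sharp Z(F)$ simultaneously for all $z\in Z(F)$; the paper invokes the argument of \cite{St}, Lemma~3.5 for this, and your appeal to Lemma~\ref{h-k} alone does not obviously produce it. Second, the contradiction you reach is not ``$P\notin Z(F)$ is impossible'' directly; it is that $(x_v)-z$ cannot have infinite order for every $z\in Z(F)$. You then need the identification of torsion in $\overline{\T(\go_{S_1})}$ with $\T(\go_{S_1})_{\mathrm{tors}}$ (the last assertion of Proposition~\ref{equiv}) to conclude $(x_v)\in Z(F)+\T(\go_{S_1})_{\mathrm{tors}}\subset T_F(F)$, and finally $Z(\A_F^S)\cap T_F(F)=Z(F)$. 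Your write-up gestures at Proposition~\ref{equiv} but does not run this endgame, which is where the finiteness and integrality really pay off.
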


\begin{proof} We first reduce to the case $Z(F)=Z(\bar{F})$
using a similar trick to the proof of \cite{PV}, Proposition 3.9.
Consider a finite extension $K/F$ such that $Z(K)=Z(\bar{F})$.
Let $S'$ be the primes of $K$ above $S$. If one can show that
$$\overline{T_F(K)}^{S'} \cap Z(\A_K^{S'}) = Z(K),$$
then
$$Z(F)\subseteq \overline{T_F(F)}^S \cap  Z(\A_F^S) \subseteq
(\overline{T_F(K)}^{S'} \cap Z(\A_K^{S'}))\cap  Z(\A_F^S)= Z(K) \cap
Z(\A_F^S)=Z(F),$$
the last equality being proved similarly to Lemma 3.2 in \cite{PV},
and the desired equality will hold over $F$.

Since $Z(F)$ is finite, there exists a finite subset $S_1\supset S$ such that $T$ extend to an affine group scheme $\T$ of finite type over
$\go_{S_1}$ and $Z(F) \subset {\T}(\go_{S_1})$ and the condition (\ref{split}) is satisfied.
One only needs to show that $$\overline{{\T}(\go_{S_1})} \cap Z(\A_F^S) \subseteq Z(F)$$ where $\overline{{\T}(\go_{S_1})}$ is the topological closure of ${\T}(\go_{S_1})$ inside $T_F(\A_F^S)$. Indeed, the left-hand side contains
$\overline{T_F(F)}^{S} \cap Z(\A_F^S)$ as we saw in the proof of Corollary \ref{true}.

For any $$(x_v)_{v\not\in S}\in  \overline{{\T}(\go_{S_1})} \cap Z(\A_F^S) , $$ we claim that there is $z\in Z(F)$ such that $(x_v)_{v\not\in S}-z$ is torsion. Suppose
the contrary. Then all $(x_v)_{v\not\in S}-z$ with $z\in Z(F)$ are of infinite order. Fix a positive integer $n$ with $n>\sharp Z(F)$. Use Proposition~\ref{equiv} and
Corollary~\ref{T-S} to identify $\overline{\T(\go_{S_1})}$ to a subgroup
of $\widehat{\Sel_S(F, \T)}$. 
By the same argument as \cite{St}, Lemma 3.5, there is a positive integer $N$ such that the order of $c((x_v)_{v\not\in S}-z)$ in $\Sel^{(N)}_S(F,{\T})$ is at least $n$ for all $z\in Z(F)$, here $c$ is the positive integer defined Lemma~\ref{cohtor}. By Lemma \ref{density}, the density of primes $v\in \Omega_F\setminus S_1$ such that $v$ splits completely in $F_N/F$ and at least one of $(x_v)_{v\not\in S}-z$ is trivial in ${\T}(\go_v)/N {\T}(\go_v)$ for some $z\in Z(F)$ is at most $$ \frac{\sharp Z(F)}{n[F_N:F]}< \frac{1}{[F_N:F]} .$$ This implies that there is $v_0\not \in S_1$ such that $v_0$ splits completely in $F_N/F$ and none of $x_{v_0}-z$ is trivial in ${\T}(\go_{v_0})/N {\T}(\go_{v_0})$ for all $z\in Z(F)$. In particular, one has $x_{v_0}\neq z$ for all $z\in Z(F)=Z(F_v)$. This contradicts the hypothesis $(x_v)_{v\not\in S}\in Z(\A_F^S)$.

Since the torsion subgroup of $\overline{{\T}(\go_{S_1})}$ is the same as the torsion subgroup ${\T}(\go_{S_1})_{\mathrm{tor}}$ of ${\T}(\go_{S_1})$ by Proposition \ref{equiv}, one concludes that
$$(x_v)_{v\not\in S} \in Z(F)+{\T}(\go_{S_1})_{\mathrm{tor}} \subseteq {\T}(\go_{S_1})\subset T_F(F) .$$ The result follows from $Z(\A_F^S)\cap T_F(F)=Z(F)$. \end{proof}

Next we make a ``$S$-adelic Mordell-Lang conjecture'' for tori  in the same spirit of Question 3.12 in
\cite{St} which was proved for abelian varieties over a global function field in \cite{PV}. 

Let $X_F$ be a hyperbolic affine curve contained in ${\mathbf P}^1_F$ and let $j: X_F\rightarrow T_F$ be a closed immersion as in Lemma \ref{embed-T} with  a torus $T_F$. 
\begin{conjecture} If $G$ is a finitely generated subgroup of $T_F(F)$ and $\overline{G}^S$ is the topological closure of $G$ inside $T_F(\A_F^S)$, then there is a 
finite subscheme $Z\subset X$ such that 
 $$ X(\A_F^S)  \cap \overline{G}^S \subseteq Z(\A_F^S).$$
\end{conjecture}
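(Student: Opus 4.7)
The plan is to combine Laurent's theorem on Mordell--Lang for tori with the Selmer group and Chebotarev density techniques developed for Theorem~\ref{inters}. As a candidate for $Z$, I would take the reduced closed subscheme of $X$ cut out by $X(\bar F)\cap \Gamma$, where $\Gamma\subset T_F(\bar F)$ is the division hull of $G$, i.e.\ $\{t\in T_F(\bar F):t^n\in G\text{ for some }n\ge 1\}$. Since $X$ is a hyperbolic rational curve in $T_F$, in particular not a translate of a subtorus, Laurent's theorem guarantees that $X(\bar F)\cap\Gamma$ is finite, so this $Z$ is a finite Galois-stable $F$-subscheme of $X$.

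To prove $X(\A_F^S)\cap\overline{G}^S\subseteq Z(\A_F^S)$ I would argue by contradiction: suppose $(x_v)\in X(\A_F^S)\cap\overline{G}^S$ has some component $x_{v_0}\notin Z(F_{v_0})$ with $v_0\notin S$. As in the first reduction in the proof of Theorem~\ref{inters}, base-change to a finite extension making every $\bar F$-point of $Z$ rational, then enlarge $S$ to $S_1\supset S$ over which $T_F$, $X$, $Z$ extend to flat finite-type models $\T$, $\X$, $\Z$ with $G\subset\T(\go_{S_1})$, $Z(F)\subset\X(\go_{S_1})$, and such that condition~\eqref{split} holds. Applying Proposition~\ref{equiv} to the subgroup $G\subset\T(\go_{S_1})$ identifies $\overline{G}^S$ with the profinite completion $\widehat G$, and via diagram~\eqref{sel} the reduction modulo $N$ of any element of $\overline{G}^S$ at any $v\notin S_1$ is controlled by $\Sel_S^{(N)}(F,\T)$.

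The key technical step would then be a variant of Lemma~\ref{density} applied to the differences $(x_v)-z$ as $z$ runs through the now-rational finite set $Z(F)$. Choosing $N$ large enough that the image of $c((x_v)-z)$ in $\Sel_S^{(N)}(F,\T)$ has order exceeding $\sharp Z(F)$ for every such $z$---which should follow from the fact that no $(x_v)-z$ is torsion, itself a consequence of the definition of $Z$ as a division-hull intersection together with $(x_v)\notin Z(\A_F^S)$---a Chebotarev density count exactly as in Lemma~\ref{density} would produce a prime $v_0\notin S_1$ splitting completely in $F_N=F(\T[N])$ at which $x_{v_0}-z\not\equiv 0$ in $\T(\go_{v_0})/N\T(\go_{v_0})$ simultaneously for every $z\in Z(F)$. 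In the setting of Theorem~\ref{inters} this would already finish the argument, because the analogous assumption $(x_v)\in Z(\A_F^S)$ forces $x_{v_0}\in Z(F_{v_0})=Z(F)$ and one obtains an immediate contradiction.

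Here, however, lies the main obstacle. Since $X$ is one-dimensional, $X(F_{v_0})$ is an infinite $v_0$-adic manifold and there is no formal reason preventing $x_{v_0}\in X(F_{v_0})$ from avoiding every element of $Z(F)$ while still lying in the $v_0$-adic closure of $G$; the Chebotarev input only rules out rational specializations. Bridging this gap seems to require an extra geometric input specific to the rational hyperbolic case---for instance a uniform Hrushovski-style bound on $X(\kappa_{v_0})\cap\pi_{v_0}(G)$ for a positive-density set of $v_0$, where $\pi_{v_0}$ denotes reduction mod $v_0$, or a higher-dimensional adelic Mordell--Lang analogue in the spirit of \cite{St}, Question~3.12. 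It is precisely the absence of such an input that leaves the statement as a conjecture rather than a theorem.
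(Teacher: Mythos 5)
This statement is a \emph{conjecture} in the paper: the authors present it explicitly as an ``$S$-adelic Mordell--Lang conjecture'' for tori, in the spirit of Stoll's Question~3.12, and offer no proof. There is therefore no ``paper's own proof'' to compare against. What you have written is not a proof but a diagnosis of why the natural line of attack does not close, and that diagnosis is correct and well aimed.

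Your choice of candidate $Z$ is the right one: Laurent's theorem on Mordell--Lang for tori applied to the division hull $\Gamma$ of $G$ does produce a finite Galois-stable subscheme of $X$, since a hyperbolic rational curve is not a translate of a subtorus. Your reduction steps (extend scalars so $Z(\bar F)=Z(F)$, enlarge $S$ to $S_1$ to spread out $\T,\X,\Z$, invoke Proposition~\ref{equiv} and diagram~\eqref{sel}) transplant the machinery of Theorem~\ref{inters} correctly. And you put your finger precisely on the gap: the Chebotarev--Selmer argument of Lemma~\ref{density} produces a place $v_0$ at which $x_{v_0}\ne z$ for every $z\in Z(F)$, which in Theorem~\ref{inters} is a contradiction because there $x_{v_0}$ is forced to land in the \emph{finite} set $Z(F_{v_0})=Z(F)$; here, by contrast, $x_{v_0}$ is only constrained to lie in the infinite $v_0$-adic set $X(F_{v_0})$, so no contradiction follows. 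That is exactly what separates Theorem~\ref{inters} (proved) from this statement (conjectural), and it is exactly why the paper leaves it open. Your suggestion that some additional geometric input --- a uniform bound on $\sharp\big(X(\kappa_{v_0})\cap \pi_{v_0}(G)\big)$ for a positive-density set of $v_0$, or a genuinely adelic Mordell--Lang statement --- would be needed to bridge the gap is consistent with the paper's framing and with the function-field analogue in \cite{PV}.
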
 

The Harari-Voloch conjecture over number fields will follow from this conjecture and Theorem \ref{inters} (see Question 3.12 in
\cite{St}).
\medskip

As an application of Theorem \ref{inters}, we show that Theorem 3 in \cite{HaVo} 
holds for any quasi-finite morphisms of rational hyperbolic curves 
(see Remarks~\ref{HV-equiv} and \ref{gen}).  

\begin{prop}\label{quasi-finite} Suppose $f: X_F\rightarrow Y_F$ is a quasi-finite morphism of curves over $F$ and $X_F$ is a rational curve inside ${\mathbf P}^1_F$ with complement of degree $\ge 2$. Fix a finite subset $S$ of primes of $F$ containing $\infty_F$. If $Y_F(\A_F^S)^{B_S(Y)}=Y_F(F)$, then $X_F(\A_F^S)^{B_S(X)}=X_F(F)$.
\end{prop}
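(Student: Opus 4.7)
The strategy is to push forward $x\in X_F(\mathbb A_F^S)^{B_S(X_F)}$ by $f$, use the hypothesis on $Y_F$ to conclude $f(x)\in Y_F(F)$, and then apply Theorem~\ref{inters} to the (finite) fiber $f^{-1}(f(x))$, realized as a zero-dimensional closed subscheme of a torus via Lemma~\ref{embed-T}. In short, the proof is a clean stitching of Brauer--Manin functoriality with the main theorem of this section.

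First I would verify that $f^\ast\colon\Br(Y_F)\to\Br(X_F)$ sends $B_S(Y_F)$ into $B_S(X_F)$: if $\eta\in B_S(Y_F)$ and $v\in S$, the restriction $\eta|_{Y_{F_v}}$ lies in the image of $\Br(F_v)\to\Br(Y_{F_v})$, so $(f^\ast\eta)|_{X_{F_v}}=(f_{F_v})^\ast(\eta|_{Y_{F_v}})$ lies in the image of $\Br(F_v)\to\Br(X_{F_v})$, i.e.\ $f^\ast\eta\in B_S(X_F)$. Combined with the identity $\eta(f(x_v))=(f^\ast\eta)(x_v)$ on Brauer evaluations, this yields a well-defined pushforward
\[
f_\ast\colon X_F(\mathbb A_F^S)^{B_S(X_F)}\longrightarrow Y_F(\mathbb A_F^S)^{B_S(Y_F)}=Y_F(F),
\]
the last equality being the hypothesis. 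Hence $y:=f(x)\in Y_F(F)$, and since $f$ is quasi-finite, $Z:=f^{-1}(y)\subset X_F$ is a finite closed subscheme defined over $F$, with $x\in Z(\mathbb A_F^S)$ by construction.

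Next I would invoke Lemma~\ref{embed-T} (applicable since the complementary divisor in $\mathbf P^1_F$ has degree $\geq 2$; the requirement $X_F(F)\ne\emptyset$ can be arranged by noting that $Z(\mathbb A_F^S)\ne\emptyset$ forces, via Chebotarev applied to the connected components of $Z$, an $F$-rational component of $Z$, and hence a rational point on $X_F$) to obtain a closed immersion $j\colon X_F\hookrightarrow T_F$ into a torus $T_F$, so that $Z$ becomes a finite closed subscheme of $T_F$. The split exact sequence~\eqref{eq:BS} of the proof of Proposition~\ref{eqhv} gives an inclusion $B_{1,S}(T_F)\hookrightarrow B_S(X_F)$ compatible with evaluation at adelic points; therefore $x\in X_F(\mathbb A_F^S)^{B_S(X_F)}$ forces $j(x)\in T_F(\mathbb A_F^S)^{B_{1,S}(T_F)}$, which equals $\overline{T_F(F)}^S$ by Proposition~\ref{eqhv}(1). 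Applying Theorem~\ref{inters} to the finite closed subscheme $Z\subset T_F$ then yields
\[
x\in\overline{T_F(F)}^S\cap Z(\mathbb A_F^S)=Z(F)\subseteq X_F(F),
\]
as desired.

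The main obstacle is essentially bookkeeping: coherently relating the three Brauer conditions on $X_F$, $Y_F$ and $T_F$, and ensuring the adelic point travels through them in a way compatible with the restrictions at $S$. The core diophantine input is Theorem~\ref{inters}; once it is accepted, the proposition is a formal consequence of pullback functoriality for $B_S$ and Proposition~\ref{eqhv}(1). The degree $\geq 2$ assumption is exactly what is needed to deploy Lemma~\ref{embed-T}, so the statement is sharp in that respect.
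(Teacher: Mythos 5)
Your proof follows essentially the same route as the paper: push forward $x$ by $f$ to land in $Y_F(F)$, take the finite fiber $Z=f^{-1}(y)$, embed $X_F$ in a torus via Lemma~\ref{embed-T}, transfer the Brauer--Manin condition through $B_{1,S}(T_F)\hookrightarrow B_S(X_F)$ to get $j(x)\in\overline{T_F(F)}^S$ via Proposition~\ref{eqhv}(1), and finish with Theorem~\ref{inters}.

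One aside is incorrect, though harmlessly so: you claim that $Z(\A_F^S)\neq\emptyset$ forces, ``via Chebotarev applied to the connected components of $Z$,'' an $F$-rational component of $Z$. This is false in general — for instance $Z=\Spec\bigl(\Q(\sqrt{13})\times\Q(\sqrt{17})\times\Q(\sqrt{221})\bigr)$ has $Z(\Q_v)\neq\emptyset$ for every place $v$ (one of $13,17,13\cdot 17$ is always a local square) yet $Z(\Q)=\emptyset$. Fortunately the conclusion you need, namely $X_F(F)\neq\emptyset$, is immediate for a different reason: $X_F$ is a nonempty Zariski-open subset of $\mathbf P^1_F$ over an infinite field, so $X_F(F)=\mathbf P^1(F)\setminus D(F)$ is infinite. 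Replace the Chebotarev parenthetical with this observation and your argument is correct and coincides with the paper's.
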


\begin{proof} For any $x\in X_F(\A_F^S)^{B_S(X)}$, there is $y\in Y_F(F)$ such that $f(x)=y$ 
by the functoriality of the Brauer-Manin pairing 
and the assumption. Then $Z=f^{-1}(y)$ is a finite closed subscheme
over $F$ inside $X_F$ and $x\in Z(\A_F^S)$. Let $j : X_F\to T_F$ be the
closed immersion given by Lemma~\ref{embed-T}. Let 
$\overline{T_F(F)}^S$ be the closure of $T_F(F)$ in $T_F(\A_F^S)$.
By Lemma \ref{br} (2), see also Definition~\ref{B1S}, the immersion $X_F\to T_F$ induces a map
$$  X_F(\A_F^S)^{B_S(X)} \longrightarrow T_F(\A_F^S)^{B_{1,S}(T_F)}. $$
By Proposition \ref{eqhv} (1), one has
$$(T_F(\A_F^S))^{B_{1,S}(T_F)} = \overline{T_F(F)}^S,$$
hence
$$ x\in \overline{T_F(F)}^S\cap Z(\A_F^S) = Z(F)\subseteq X_F(F) $$ by Theorem \ref{inters}. The proof is complete.
\end{proof}

\begin{rem}\label{gen} Let $C_F$ be a projective smooth curve
of positive genus over $F$. In \cite{St}, Theorem 8.2, it is
proved that for any finite closed subset $Z$ of $C_F$, one has
$$ Z(\A_F)\cap C_F(\A_F)_{\bullet}^{\Br(C_F)}=Z(F),$$
where the subscript $\bullet$ means that at infinite places $v$,
$C_F(F_v)$ is replaced with its set of connected components
$\pi_0(C_F(F_v))$. 

Using the above result, J.-L. Colliot-Th\'el\`ene proved in \cite{CT-HV} 
that \cite{HaVo} Theorem 3, can be generalized to covers of
arbitrary degree, provided the group $B_S(.)$ is replaced
by the whole Brauer group $\Br(.)$. Our proof of 
Proposition~\ref{quasi-finite} is based on similar 
arguments.  If $X_F$ is an open subvariety of $C_F$ containing
$Z$ and if the analogue equality
$$ Z(\A_F^S)\cap X_F(\A_F)^{B_S(X_F)}=Z(F)$$
holds, then one can remove the rationality hypothesis on
$X_F$ in Proposition \ref{quasi-finite}. 
\end{rem}

\begin{cor}\label{hv-ex2} Let $D$ be a reduced effective divisor in $\mathbf P^1_\Q$ 
supported in at least two points. Let $X_\Q$ be the complement of $D$ and 
let $S=\{\infty_\Q\}$. Then 
$$X_\Q(\Q)=X_\Q(\A_\Q^S)^{B_S(X_\Q)}.$$
In particular, Harari-Voloch conjecture holds for $X_\Q$ and $S$ if $\deg D\ge 3$
(equivalently, if $X_\Q$ is hyperbolic). 
\end{cor}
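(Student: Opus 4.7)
The plan is to reduce to the case $Y_\Q = \G_m$ via Proposition~\ref{quasi-finite}, after producing a quasi-finite morphism $\phi \colon X_\Q \to \G_m$ from the hypothesis that $D$ is supported in at least two distinct closed points of $\mathbf P^1_\Q$.

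To construct $\phi$, pick distinct closed points $P_1, P_2 \in \mathrm{supp}(D)$ of residue-field degrees $d_1, d_2 \ge 1$. Since $\Pic(\mathbf P^1_\Q) \simeq \Z$, the degree-zero divisor $d_2[P_1] - d_1[P_2]$ is principal, so there exists $\phi \in \Q(x)^{\times}$ with $\mathrm{div}(\phi) = d_2[P_1] - d_1[P_2]$. Viewed as a morphism $\mathbf P^1_\Q \to \mathbf P^1_\Q$ of degree $d_1 d_2$, $\phi$ satisfies $\phi^{-1}(0) = \{P_1\}$ and $\phi^{-1}(\infty) = \{P_2\}$ set-theoretically, both of which are contained in $D$. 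The restriction of $\phi$ to $\mathbf P^1_\Q \setminus \{P_1, P_2\}$ is a finite morphism onto $\G_m$, so its further restriction $\phi \colon X_\Q \to \G_m$ is quasi-finite.

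Next I would verify that $\G_m$ itself satisfies the Harari--Voloch equality $\G_m(\Q) = \G_m(\A_\Q^S)^{B_S(\G_m)}$ for $S = \{\infty_\Q\}$. The torus $T_\Q = \G_m$ trivially meets the rank condition of Theorem~\ref{stp}(2), so by that theorem $\G_m(\Q)$ is closed in $\G_m(\A_\Q^S)$, that is, $\overline{\G_m(\Q)}^S = \G_m(\Q)$. Proposition~\ref{eqhv}(1)---which is purely a statement about tori, its proof making no essential use of the hyperbolicity of the associated curve---then gives $\G_m(\A_\Q^S)^{B_{1,S}(\G_m)} = \G_m(\Q)$. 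Combining this with the split decomposition $B_S(\G_m) \simeq \Br(\Q) \oplus B_{1,S}(\G_m)$ of Lemma~\ref{br}(2), in which the $\Br(\Q)$ summand imposes no constraint on $\G_m(\A_\Q^S)$ by Artin reciprocity, yields $\G_m(\A_\Q^S)^{B_S(\G_m)} = \G_m(\Q)$.

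Finally, since $X_\Q \subseteq \mathbf P^1_\Q$ has complement $D$ of degree $\ge 2$, Proposition~\ref{quasi-finite} applied to $\phi \colon X_\Q \to \G_m$ produces the desired equality $X_\Q(\Q) = X_\Q(\A_\Q^S)^{B_S(X_\Q)}$, which is the statement of the corollary. The ``in particular'' clause is an immediate rephrasing in the hyperbolic case $\deg D \ge 3$. The only real difficulty is the explicit construction of $\phi$, which crucially uses the assumption that $D$ has at least two distinct closed points; everything else follows formally from Proposition~\ref{quasi-finite} together with the properties of $\G_m$ established above.
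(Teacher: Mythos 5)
Your proof is correct and follows essentially the same route as the paper's: construct a non-constant regular invertible function on $X_\Q$ giving a quasi-finite map $X_\Q \to \G_m$, verify the equality for $\G_m$ via Theorem~\ref{stp}(2) and Proposition~\ref{eqhv}, and conclude by Proposition~\ref{quasi-finite}. The paper invokes Proposition~\ref{eqhv}(2) while you combine~\ref{eqhv}(1) with the splitting from Lemma~\ref{br}(2), and you spell out the construction of $\phi$ via $\Pic(\mathbf P^1_\Q)=\Z$ rather than simply asserting the existence of $f\in\cO(X_\Q)^\times\setminus\Q^\times$, but these are cosmetic differences.
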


\begin{proof} 
Let $P_1, P_2$ be two points in $D$. There exists a non-constant invertible 
element $f\in \cO(X_\Q)^\times$. The finite morphism $\mathbf P^1_\Q\to 
\mathbf P^1_\Q$ defined by $f$ induces a quasi-finite morphism
$X_\Q\to Y_\Q:=\G_{m, \Q}$. As $$Y_\Q(\A_\Q^{S})^{B_S(Y_\Q)}=Y_\Q(\Q)$$ 
(e.g. by Proposition~\ref{eqhv} (2) and Theorem~\ref{stp} (2)), 
the same equality holds for $X_\Q$ by Proposition~\ref{quasi-finite}. 
\end{proof}

\noindent{\bf Acknowledgements.}
We would like to thank J.-L. Colliot-Th\'el\`ene, D. Harari, Y. Liang
and F. Voloch for helpful discussions. We would also like to thank Liang-Chung
Hsia for drawing our attention to \cite{Sun}. We thank the referee for 
a careful reading of the manuscript and for pointing out some inaccuracies.
The first named author thanks Capital
Normal University, where part of this work was done, for its support. The second 
named author is supported by the ALGANT program in Universit\'e de Bordeaux, MPI for mathematics at Bonn from 
Sep.-Oct.2014 and NSFC grant no. 11031004.

\begin{bibdiv}

\begin{biblist}

\bib{BLR}{book}{
title={N\'eron models},
author={Bosch, S.},
author={L\"utkebohmert, W.},
author={Raynaud, M.},
publisher={Spring-Verlag},
series={Ergebnisse der Math. {\bf 3}},
volume={21},
date={1990}
}

\bib{CT-HV}{article}{
    author={Colliot-Th\'el\`ene, J.-L.},
    title={Letter to D. Harari and F. Voloch}, 
    note={Personal communication}, 
    date={Oct. 2010},
}

\bib{CTX} {article} {
    author={Colliot-Th\'el\`ene, J.-L.},
    author={Xu, F.},
    title={Brauer-Manin obstruction for integral points of homogeneous spaces and representations by integral quadratic forms},
    journal={Compositio Math.},
    volume={145},
    date={2009},
    Pages={309-363},
}

\bib{CTX11} {article} {
    author={Colliot-Th\'el\`ene, J.-L.},
    author={Xu, F.},
    title={Strong  approximation for the total space of certain quadric fibrations},
    journal={Acta Arithmetica},
    volume={157},
    date={2013},
    Pages={169-199},
}

\bib{Conrad}{article} {
    author={Conrad, Brian},
    title={Weil and Grothendieck approaches to adelic points},
    journal={Enseign. Math.},
    volume={58},
    date={2012},
    Pages={61-97},
}

\bib{Conrad2}{article} {
    author={Conrad, Brian},
    title={Deligne's notes on Nagata compactifications},
    date={2007},
    volume={22},
    journal={J. Ramanujan Math. Soc.},
    Pages={205-257},
}

\bib{GT}{article} {
    author={Gonz\'alez-Avil\'es, C.D.},
    author={Tan, K.S.},
    title={The generalized Cassels-Tate duality exact sequence for 1-motives},
    date={2009},
    volume={16},
    journal={Math. Res. Lett.},
    Pages={827-839},
}

\bib{EGA}{article}{
title={\'El\'ements de g\'eom\'etrie alg\'ebrique},
author={Grothendieck, A.},
author={Dieudonn\'e, J.},
journal={Publ. Math. IHES},
volume={4, 8, 11, 17, 20, 24, 28, 32},
date={1960 to 1967}
}

\bib{GB1}{article}{
title={Groupe de Brauer I},
author={Grothendieck, A.},
book={
  title={Dix Expos\'es sur la Cohomologie des Sch\'emas},
  publisher={North-Holland},
  address={Amsterdam},
  date={1968},
},
pages={46-65},
}

\bib{H}{article}{
    author={Harari, D.},
    title={Le d\'efaut d'approximation forte pour les groupes alg\'ebriques commutatifs},
    journal={Algebra and Number Theory},
    volume={2},
    date={2008},
    Pages={595-611},
}

\bib{HS}{article}{
    author={Harari, D.},
    author={Szamuely, T.},
    title={Arithmetic duality theorems for 1-motives},
    journal={J. reine angew. Math.},
    volume={578},
    date={2005},
    pages={93-128},
 }

\bib{HaVo}{article}{
    author={Harari, D.},
    author={Voloch, J.F.},
 title={The Brauer-Manin obstruction for integral points on curves},
  journal={Math. Proc. Cambridge Philos. Soc.},
    volume={149},
      date={2010},
    pages={413-421},
    number={}
 }

\bib{Harder} {article} {
    author={Harder, G.},
 title={Minkowskiche Reduktiontheorie \"uber Funktionk\"orpern},
  journal={Invent. Math.},
    volume={7},
    number={1},
      date={1969},
    pages={33-54},
 }

\bib{Lut}{article}{
  author={L\"utkebohmert, W.},
  title={On compactification of schemes},
  journal={Manuscripta Math.},
  volume={80},
  pages={95-111},
  date={1993},
}

\bib{Milne80}{book}{
    author={Milne, J.S.},
     title={\'Etale cohomology},
       volume={ },
     publisher={Princeton Press},
     place={},
      date={1980},
   journal={ },
    series={},
    volume={},
    number={ },
}

\bib{Milne86}{book}{
    author={Milne, J.S.},
     title={Arithmetic Duality Theorems},
       volume={ },
     publisher={Academic Press},
     place={},
      date={1986},
   journal={ },
    series={},
    volume={},
    number={ },
}

\bib{NSW2}{book}{
    author={ Neukirch, J.},
    author={Schmidt, A.},
    author={Wingberg, K.},
    title={Cohomology of Number Fields},
    volume={323},
    publisher={Springer},
    series={Grundlehren},
    edition={second edition},
    date={2008},
}

\bib{NS} {article} {
    author={Nikolov, N.},
    author={Segal, D.},
 title={On finitely generated profinite groups, I: Strong completeness and uniform bounds},
  journal={Ann. of  Math. },
    volume={165},
    number={},
    date={2007},
    pages={171-238},
 }

\bib{PV} {article} {
    author={Poonen, B.},
    author={Voloch, J.F.},
 title={The Brauer-Manin obstruction for subvarieties of abelian varieties over function fields},
  journal={Ann. of  Math. },
    volume={171},
    number={1},
    date={2010},
    pages={511-532},
 }

\bib{Shyr}{article}{
  author={Shyr, Jih-Min},
  title={A generalization of Dirichlet's unit theorem},
  journal={Journal of Number Theory},
  volume={9},
  pages={213-217},
  date={1977},
}

\bib{Sk}{book}{
    author={Skorobogatov, A.},
    title={Torsors and Rational Points},
    publisher={Cambridge University Press},
    place={},
    journal={ },
    series={Cambridge Tracts in Mathematics },
    volume={144},
    date={2001},
    number={ },
     pages={},
}

\bib{SZ}{article} {
author={Skorobogatov, A.},
author={Zarhin, Y.},
title={The Brauer group and the Brauer-Manin set of product varieties},
journal={J. Eur. Math. Soc.},
volume={16}, 
date={2014},
Pages={749-769},
}

\bib{St}{article} {
author={Stoll, M.},
title={Finite descent obstructions and rational points on curves},
journal={Algebra and Number Theory},
volume={1},
date={2007},
pages={349-391},
}

\bib{Sun}{article} {
author={Sun, C.-L.},
title={Product of local points of subvarieties of almost isotrivial semi-abelian varieties over a global function field },
journal={Int. Math. Res. Notices},
volume={2013},
date={2013},
pages={4477-4498},
}

\bib{Sun2}{article}{
author={Sun, C.-L.},
title={The Brauer-Manin-Scharaschkin obstruction for subvarieties of a 
semi-abelian variety and its dynamical analog},
status={preprint},
}

\bib{WX}{article} {
    author={Wei, Dasheng},
    author={Xu, Fei},
    title={Integral points for groups of multiplicative type},
    journal={Adv. in Math.},
    volume={232},
    date={2013},
    Pages={36-56},
}

\end{biblist}
\end{bibdiv}

\end{document}